\newcommand{\abs}[1]{|#1|}
\newcommand{\Norm}[2]{\|#1\|_{#2}}
\newcommand{\ave}[1]{\langle #1\rangle}
\newcommand{\BMO}[0]{\operatorname{BMO}}
\newcommand{\CMO}[0]{\operatorname{CMO}}
\newcommand{\supp}[0]{\operatorname{supp}}
\newcommand{\loc}[0]{\operatorname{loc}}
\newcommand{\testi}{{\mathcal S}}
\newcommand{\R}{\mathbb{R}}
\newcommand{\C}{\mathbb{C}}
\newcommand{\N}{\mathbb{N}}
\newcommand{\Z}{\mathbb{Z}}
\newcommand{\eps}[0]{\varepsilon}
\swapnumbers \numberwithin{equation}{section}
\theoremstyle{plain}
\newtheorem{theorem}[equation]{Theorem}
\newtheorem{proposition}[equation]{Proposition}
\newtheorem{lemma}[equation]{Lemma}
\theoremstyle{definition}
\newtheorem{definition}[equation]{Definition}
\theoremstyle{remark}
\newtheorem{remark}[equation]{Remark}
 \def\@textbottom{\vskip \z@ \@plus 1pt}
 \let\@texttop\relax
\def\namedlabel#1#2{\begingroup
   \def\@currentlabel{#2}%
   \label{#1}\endgroup
}
\begin{document}

\title[Extrapolation of compactness]{Extrapolation of compactness on\\ weighted spaces: bilinear operators}

\author[T. \ Hyt\"onen and S. Lappas]{Tuomas Hyt\"onen and Stefanos Lappas}
\address{Department of Mathematics and Statistics, P.O.Box~68 (Pietari Kalmin katu~5), FI-00014 University of Helsinki, Finland}
\email{tuomas.hytonen@helsinki.fi}
\email{stefanos.lappas@helsinki.fi}


\keywords{Rubio de Francia extrapolation, 
Multilinear Muckenhoupt weights,
Compact operators,  
Calder\'{o}n-Zygmund operators, 
Fractional integral operators, 
Fourier multipliers,  
Commutators}
\subjclass[2020]{47B38 (Primary); 42B20, 42B35, 46B70, 47H60}



\maketitle


\begin{abstract}
In a previous paper, we obtained several ``compact versions'' of Rubio de Francia's weighted extrapolation theorem, which allowed us to extrapolate the compactness of linear operators from just one space to the full range of weighted Lebesgue spaces, where these operators are bounded. In this paper, we study the extrapolation of compactness for bilinear operators in terms of bilinear Muckenhoupt weights. As applications, we easily recover and improve earlier results on the weighted compactness of commutators of bilinear Calder\'{o}n-Zygmund operators, bilinear fractional integrals and bilinear Fourier multipliers. More general versions of these results are recently due to Cao, Olivo and Yabuta (arXiv:2011.13191), whose approach depends on developing weighted versions of the Fr\'echet--Kolmogorov criterion of compactness, whereas we avoid this by relying on ``softer'' tools, which might have an independent interest in view of further extensions of the method.
\end{abstract}

\section{Introduction}

Rubio de Francia's weighted extrapolation theorem \cite{Rubio:factorAp} is one of the cornerstones of the modern theory of weighted norm inequalities. It enables one to deduce the {\em boundedness} of a given operator on $L^p(w)$ for all $1<p<\infty$ and all weights $w\in A_{p}(\R^d)$, provided this operator is bounded on $L^{p_0}(w_0)$ for some $1<p_0<\infty$ and all weights $w_0\in A_{p_0}(\R^d)$. Different versions of this extrapolation theorem are studied in \cite{CUMP:book}.

A multilinear Rubio de Francia extrapolation theorem of boundedness on weighted spaces was first established by Grafakos and Martell in \cite{GM} (see also the extension of this result in \cite{CM}). The main disadvantage of these results is that they treat each variable separately with its own Muckenhoupt class of weights and do not fully use the multilinear nature of the problem. In this direction, Li--Martell--Ombrosi \cite{LMO} (see also \cite{LMMOV,N} for further extensions to end-point cases) obtained a more satisfactory multilinear analogue of the Rubio de Francia's extrapolation theorem dealing with the multilinear $A_{\vec p}(\R^{md})$ classes introduced in \cite{KLOPTG}. We state here the bilinear version of their extrapolation result as follows (we will provide detailed definitions in the next section):

\begin{theorem}[\cite{LMO}, Corollary 1.5]\label{thm:extrapol. bdd.}
Let $\mathcal{F}$ be a collection of triplets $(f,f_1,f_2)$ of non-negative functions. Let $\vec{p}=(p_1,p_2)$ be exponents with $1\leq p_1,p_2<\infty$, such that given any $\vec{w}=(w_1,w_2)\in A_{\vec{p}}(\R^{2d})$, the inequality
\begin{equation*}
  \|f\|_{L^p(\nu_{\vec{w},\vec{p}})} \lesssim \prod_{i=1}^2 \|f_i\|_{L^{p_i}(w_i)}, 
\end{equation*}
holds for all $(f,f_1,f_2)\in\mathcal{F}$, where $\frac{1}{p}=\frac{1}{p_1}+\frac{1}{p_2}$ and $\nu_{\vec{w},\vec{p}}=\prod_{i=1}^2 w_i^{p/p_i}$. Then for all exponents $\vec{q}=(q_1,q_2)$ with $1<q_1,q_2<\infty$, and for all weights $\vec{v}=(v_1,v_2)\in A_{\vec{q}}(\R^{2d})$ the inequality
\begin{equation*}
  \|f\|_{L^q(\nu_{\vec{v},\vec{q}})} \lesssim \prod_{i=1}^2 \|f_i\|_{L^{q_i}(v_i)} 
\end{equation*}
holds for all $(f,f_1,f_2)\in\mathcal{F}$, where $\frac{1}{q}=\frac{1}{q_1}+\frac{1}{q_2}$ and $\nu_{\vec{v},\vec{q}}=\prod_{i=1}^2 v_i^{q/q_i}$.  
\end{theorem}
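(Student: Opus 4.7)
The plan is to reduce Theorem \ref{thm:extrapol. bdd.} to two iterated applications of a linear \emph{off-diagonal} Rubio de Francia extrapolation theorem, exploiting the factorization structure of the bilinear Muckenhoupt class. The key structural input is the characterization
\[
\vec w \in A_{\vec p}(\R^{2d}) \Longleftrightarrow \nu_{\vec w,\vec p} \in A_{2p} \text{ and } w_i^{1-p_i'} \in A_{2p_i'} \ (i=1,2),
\]
with the standard modifications when some $p_i = 1$. This lets one translate the bilinear hypothesis into two scalar weight conditions that can be handled one variable at a time.

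The linear ingredient I would invoke is the off-diagonal extrapolation of Harboure--Mac\'ias--Segovia type: if a mapping satisfies
\[
\|Tg\|_{L^{q_0}(W^{q_0})} \lesssim \|g\|_{L^{p_0}(W^{p_0})}
\]
for all $W$ in the appropriate class (so that $W^{q_0}$ lies in some Muckenhoupt class tied to $(p_0,q_0)$), then the same holds for every $(p,q)$ with $\tfrac{1}{p_0}-\tfrac{1}{q_0}=\tfrac{1}{p}-\tfrac{1}{q}$ and every admissible $W$. The proof runs through the usual Rubio de Francia algorithm applied to a carefully chosen maximal operator, producing $A_1$ majorants whose powers absorb into the coupled weight on each side.

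To iterate, fix $\vec q$ and $\vec v \in A_{\vec q}$. Since $\nu_{\vec w,\vec p}^{1/p} = w_1^{1/p_1} w_2^{1/p_2}$, after the substitution $g_2 = f_2\, w_2^{1/p_2}$ (with $f_1, w_1$ frozen) the hypothesis becomes a linear off-diagonal estimate $g_2 \mapsto f$ with a single coupling weight expressible through $w_2$. Applying the off-diagonal extrapolation moves $p_2 \to q_2$, while the class of allowed $w_2$'s is precisely the one corresponding to $w_2^{1-q_2'} \in A_{2q_2'}$. Freezing the new role of the second variable (with $f_2$ and $v_2$ now playing the frozen part) and repeating the argument in the first coordinate moves $p_1 \to q_1$. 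The composite condition on $(v_1, v_2)$ that survives is, by the characterization above, exactly $\vec v \in A_{\vec q}$.

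The main obstacle is the bookkeeping of the weight classes along the iteration: one must verify that the single-weight Muckenhoupt condition produced by each off-diagonal step matches the correct marginal $A_{2q_i'}$ and $A_{2q}$ conditions extracted from the factorization of $A_{\vec q}$, and that the endpoint $p_i=1$ cases, which lie outside the scope of a purely strong-type off-diagonal extrapolation, are treated via weak-type analogs and $A_1$ weights. The two-step iteration works cleanly precisely because $A_{\vec p}$ factorizes into the right number of scalar Muckenhoupt conditions; any less structural input would force one to revert to the weaker product-weight framework of Grafakos--Martell.
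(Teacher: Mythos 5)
This result is quoted from Li--Martell--Ombrosi \cite{LMO} without proof in the present paper, so there is no in-paper argument to compare against; I will evaluate your attempt on its own terms. The overall shape --- change one exponent at a time using the scalar characterization of $A_{\vec p}$ (Theorem \ref{thm:Jiao}, with $s_1=s_2=1$) --- is a reasonable plan, but the reduction to a black-box scalar off-diagonal extrapolation theorem has a genuine gap.

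The characterization you invoke is that $\vec w\in A_{\vec p}(\R^{2d})$ if and only if $w_j^{1-p_j'}\in A_{2p_j'}(\R^d)$ for $j=1,2$ \emph{and} $\nu_{\vec w,\vec p}\in A_{2p}(\R^d)$. Your argument proceeds as if only the marginal conditions $w_j^{1-p_j'}\in A_{2p_j'}$ matter, but the third condition is a genuine coupling: with $w_1$ frozen, the set of admissible $w_2$'s is $\{w_2 : w_2^{1-p_2'}\in A_{2p_2'},\ w_1^{p/p_1}w_2^{p/p_2}\in A_{2p}\}$, which is not of the form ``$w_2$ lies in some scalar class $A_r$ or $A_{r,s}$.'' A Harboure--Mac\'ias--Segovia-type theorem requires the hypothesis to hold for \emph{all} weights in such a scalar class, and you do not have that; in addition, the exponent $p/p_1$ on the frozen factor inside $\nu_{\vec w,\vec p}$ itself changes as $p_2\to q_2$ (since $1/p=1/p_1+1/p_2$), so after your substitution the hypothesis is not of the fixed form $\|Tg\|_{L^{q_0}(W^{q_0})}\lesssim\|g\|_{L^{p_0}(W^{p_0})}$ with a single free weight $W$. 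If you retreat to using only the marginal conditions so that the scalar theorem literally applies, you are working within the product class $A_{q_1}(\R^d)\times A_{q_2}(\R^d)$, which is strictly smaller than $A_{\vec q}(\R^{2d})$, and you only recover the Grafakos--Martell extrapolation \cite{GM} rather than the statement at hand. Closing this gap requires a multilinear Rubio de Francia construction that simultaneously preserves the marginal and the joint conditions --- that construction is the real content of \cite{LMO}, not the ``bookkeeping'' your proposal defers it to.
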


In a recent paper, we \cite{HL} first provided the extrapolation of {\em compactness} of a linear operator. Moreover, we obtained generalizations of the preceding compact extrapolation to the ``off-diagonal'' and limited range cases.

Inspired by the work above, we extend our results of \cite{HL} about the extrapolation of compactness to the following bilinear setting:

\begin{theorem}\label{thm:main 1}
Let $\Theta$ be a collection of ordered triples of Banach spaces $(Y_1,Y_2,Y)$, and let $T$ be a bilinear operator defined and 
\begin{equation*}
  \text{bounded}\quad T:Y_1\times Y_2\to Y\quad\text{for {\bf all}}\quad (Y_1,Y_2,Y)\in\Theta
\end{equation*}
and
\begin{equation*}
  \text{compact}\quad T:X_1\times X_2\to X\quad\text{for {\bf some}}\quad (X_1,X_2,X)\in\Theta.
\end{equation*}
Then $T$ is
\begin{equation*}
  \text{compact}\quad T:Z_1\times Z_2\to Z\quad\text{for {\bf all}}\quad (Z_1,Z_2,Z)\in\Theta
\end{equation*}
in each of the several cases of $\Theta$ involving weighted $L^p$ spaces as described in Theorem \ref{thm:main} of Section \ref{section 2}.
\end{theorem}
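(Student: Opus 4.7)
My plan is to follow the two-step strategy of the linear extrapolation of compactness in \cite{HL}, suitably adapted to the bilinear setting. The first step is to invoke Theorem~\ref{thm:extrapol. bdd.}, which immediately yields that $T \colon Z_1 \times Z_2 \to Z$ is bounded for every $(Z_1, Z_2, Z) \in \Theta$. Hence the task reduces to upgrading this boundedness to compactness at an arbitrary target triple, given compactness at one particular triple $(X_1, X_2, X) \in \Theta$.

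The second step is to position the target triple $(Z_1, Z_2, Z)$ as an intermediate point of a bilinear complex interpolation segment whose endpoints are themselves triples in $\Theta$, and on which the compact triple $(X_1, X_2, X)$ appears as another intermediate point. In the weighted $L^p$ setting spelled out in Theorem~\ref{thm:main}, this is flexible: given any target exponents $(q_1, q_2)$ and weights $(v_1, v_2) \in A_{\vec{q}}(\R^{2d})$, one can perturb both exponents and weights on either side to obtain two endpoint triples still lying in $\Theta$, chosen so that both the target and the compact triple correspond to parameter values on the same segment. A Stein--Weiss-type identification then realizes each $Z_i$ and $Z$ as a complex interpolation space of the endpoint weighted $L^p$ spaces.

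With this geometric picture in place, a bilinear analogue of Cwikel's interpolation of compactness delivers the conclusion: if $T$ is bilinearly bounded between two interpolation couples and compact at one intermediate point, then it is compact at every intermediate point. The two endpoint triples provide the required boundedness via the first step, while compactness at the intermediate point $(X_1, X_2, X)$ is the standing hypothesis; this yields compactness at $(Z_1, Z_2, Z)$, as desired.

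The main obstacle I anticipate is the bilinear interpolation of compactness in the generality needed. The classical linear Cwikel theorem does not directly transplant to the bilinear setting, so one must either establish a sufficient bilinear Cwikel-type statement---for instance, by freezing one variable and exploiting uniformity on bounded subsets in the other to reduce to the linear extrapolation result of \cite{HL}---or invoke such a result from the literature. A secondary difficulty is that Theorem~\ref{thm:extrapol. bdd.} requires $q_i > 1$ for all indices, so the endpoint triples in the interpolation must be chosen to stay strictly inside this range, which may require case analysis depending on the position of the target within the bilinear Muckenhoupt region and on the specific variant of $\Theta$ (off-diagonal, limited range, etc.) considered in Theorem~\ref{thm:main}.
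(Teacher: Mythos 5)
Your overall strategy is the right one and matches the paper in spirit: realise the target triple $(Z_1,Z_2,Z)$ as a complex interpolation point between two triples of weighted $L^p$ spaces on which boundedness holds, position the compact triple on the same segment, identify the interpolation spaces via the Stein--Weiss formula (the paper quotes \cite[Theorem 5.5.3]{BL} for this), and then invoke a bilinear interpolation-of-compactness theorem. The abstract compactness result the paper uses is exactly the one you say you may need to ``invoke from the literature'': the Cobos--Fern\'andez-Cabrera--Mart\'inez theorem (\cite[Theorem 3.2]{CCM}, restated as Theorem~\ref{thm:CCM}), which for Banach couples $\bar A,\bar B,\bar E$ of Banach function spaces gives: if $T\in\mathcal{B}(\bar A\times\bar B,\bar E)$, $T:A_1^\circ\times B_1^\circ\to E_1$ is compact, and $E_1$ has absolutely continuous norm, then $T:[A_1,A_2]_\theta\times[B_1,B_2]_\theta\to[E_1,E_2]_\theta$ is compact for all $\theta\in(0,1)$. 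The absolute-continuity hypothesis is verified for weighted $L^p$ by dominated convergence (Lemma~\ref{lem:lemOk}).

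There are, however, two concrete deviations from the paper, one of which is a gap. First, the segment construction: you propose to perturb ``on either side'' so that \emph{both} the compact triple and the target end up at interior points of a segment whose endpoints lie in $\Theta$. The paper does something simpler and safer: it fixes the compact triple as one endpoint of the segment (so only the \emph{other} endpoint needs to be produced, by the small-$\theta$ perturbation carried out in Proposition~\ref{prop:main} and Lemmas~\ref{lem:main1}--\ref{lem:main6}), and this matches the hypotheses of Theorem~\ref{thm:CCM} directly, since the theorem asks for compactness at the \emph{endpoint} $A_1^\circ\times B_1^\circ\to E_1$, not at an interior point. Your version requires both extending the segment past the compact triple --- which may exit $\Theta$ unless you justify an openness argument around that fixed triple, not just around the target --- and upgrading ``endpoint $\Rightarrow$ interior'' to ``interior $\Rightarrow$ interior'' via a reiteration step. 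Neither is insurmountable, but both are avoidable, and you do not supply them.

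Second, your fallback plan for establishing the bilinear Cwikel-type statement ``by freezing one variable and exploiting uniformity on bounded subsets in the other to reduce to the linear extrapolation result'' would not, as stated, give bilinear compactness: compactness of each partial map $T(f,\cdot)$ says nothing about precompactness of $T(B_1\times B_2)$. The correct move is the one you mention second, namely citing an existing bilinear compactness interpolation theorem for the complex method; \cite[Theorem 3.2]{CCM} is tailored to exactly this situation (Banach function lattices, one space with absolutely continuous norm) and the verification of its hypotheses in the weighted $L^p$ setting is entirely routine. A minor point: invoking Theorem~\ref{thm:extrapol. bdd.} in your Step~1 is harmless but unnecessary, since the hypothesis of Theorem~\ref{thm:main 1} already postulates boundedness for every triple in $\Theta$; the role of Theorem~\ref{thm:extrapol. bdd.} is only to show (Remark~2.9(1)) that boundedness at a single triple would suffice.
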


Shortly before our completion of this paper, this same result, even in a more general version covering higher order multilinearities and quasi-Banach spaces, was already announced by Cao, Olivo and Yabuta \cite{COY}, which gives these authors a priority to this result.
The overall relation of the present paper and \cite{COY} is a bit complicated, due to several subsequent versions of both works that were posted in the arXiv. As said, version 1 of \cite{COY} (Nov 2020) preceded ours, but did not provide a self-contained argument, since some results were quoted from a preprint of the same authors that was not publicly available. This was fixed in version 2 of \cite{COY} that was posted shortly after version 1 of the present work in Dec 2020. On the other hand, versions 1 and 2 of \cite{COY} did not treat the ``off-diagonal'' cases of extrapolation, which we covered since our version 1 (case \ref{(2)} of our Theorem \ref{thm:main}) but which was only added (in a more general form) in version 3 of \cite{COY} (Feb 2021). The newest version of \cite{COY} hence seems to supersede ours in all aspects, but there are a couple of features in our approach that still make it a worthwhile alternative:
\begin{itemize}
  \item As in the previous part \cite{HL} of this series, we have tried to make our approach as ``soft'' as possible, so that compactness is achieved by abstract means, without the need to describe concrete conditions for compactness in the weighted $L^p$ spaces. This is a main difference of our approach compared to all other works on compactness of operators on $L^p(w)$, including the recent \cite{COY}, where weighted versions of the {\em Fr\'echet--Kolmogorov compactness criterion} play a key role (see \cite[Lemma 2.9]{COY}, which extends \cite[Lemma 4.1]{XYY}). 
  \item Our result is still powerful enough to recover and improve several compactness results for bilinear commutators that were available before \cite{COY} (for applications see Sections \ref{section 6}--\ref{section 8}).
\end{itemize}

The paper is organized as follows: in Section \ref{section 2}, we recall some definitions about multilinear Muckenhoupt weights and we state in details our main result (see Theorem \ref{thm:main}). In Section \ref{section 3} we present the proof of Theorem \ref{thm:main} by collecting some previously known results and taking some auxiliary results for granted. Sections \ref{section 4} and \ref{section 5} are devoted to the proofs of these auxiliary results (see Proposition \ref{prop:main}). In Sections \ref{section 6}--\ref{section 8} we provide several applications of our main results. In particular, we obtain results for the commutators of
{\em bilinear Calder\'{o}n-Zygmund operators, bilinear fractional integral operators} and {\em bilinear Fourier multipliers}.

\subsection*{Notation} Throughout the paper, $C$ always denotes a positive constant that may vary from line to line but remains independent of the main parameters. We use the symbol $f\lesssim g$ to denote that there exists a positive constant $C$ such that $f\leq Cg$. The term cube always refers to a cube $Q\subset\R^d$ and $|Q|$ denotes its Lebesgue measure. We denote the average of $w$ over $Q$ as $\ave{w}_Q:=\abs{Q}^{-1}\int_Q w$ and $p'$ is the conjugate exponent to $p$, that is $p':=p/(p-1)$.

\subsection*{Acknowledgements} Both authors were supported by the Academy of Finland through the grant No. 314829. The second author wishes to thank his doctoral supervisor Prof. Tuomas Hyt\"onen for his suggestions and fruitful interaction. Also, the second author gratefully acknowledges financial support from the Foundation for Education and European Culture (Founders Nicos and Lydia Tricha).

\subsection*{Declarations of interest:} none.

\section{Preliminaries and the statement of the main result}\label{section 2}

We begin by recalling several definitions related to linear and multilinear Muckenhoupt weights. 

\begin{definition}[\cite{Muckenhoupt:Ap}]\label{def:Muchenhoupt weights}
A weight $w\in L_{\loc}^1(\R^d)$ is called a Muckenhoupt $A_p(\R^d)$ weight (or $w\in A_p(\R^d)$) if 
\begin{equation*}
\begin{split}
  &[w]_{A_p}:=\sup_Q\ave{w}_Q\ave{w^{-\frac{1}{p-1}}}_Q^{p-1}<\infty,\qquad 1<p<\infty, \\
  &[w]_{A_1}:=\sup_Q\ave{w}_Q\Norm{w^{-1}}{L^\infty(Q)}<\infty,\qquad p=1,
\end{split}
\end{equation*}
where the supremum is taken over all cubes $Q\subset\R^d$, and $\ave{w}_Q:=\abs{Q}^{-1}\int_Q w$.
A weight $w$ is called an $A_{p,q}(\R^d)$ weight (or $w\in A_{p,q}(\R^d)$) if
\begin{equation*}
  [w]_{A_{p,q}}:=\sup_Q\ave{w^q}_Q^{1/q}\ave{w^{-p'}}_Q^{1/p'}<\infty,\qquad 1<p\leq q<\infty,
\end{equation*}
where $p':=p/(p-1)$ denotes the conjugate exponent.
\end{definition}

\begin{definition}
Given a vector of weights $\vec w =(w_1,\ldots,w_m)$, and $\vec p =(p_1,\ldots,p_m)\in(0,\infty)^m$, we define
\begin{equation*}
  \nu_{\vec w,\vec p}:=\prod_{j=1}^m w_j^{p/p_j},\qquad
  \nu_{\vec w}:=\prod_{j=1}^m w_j.
\end{equation*}
\end{definition}

\begin{definition}[\cite{KLOPTG}]\label{def:multilinearAp}
Let $\vec{p}=(p_1,\dots,p_m)$ and $\frac{1}{p}=\sum_{j=1}^m\frac{1}{p_j}$ with $1\leq p_1,\dots,p_m<\infty$. We say that a vector of weights $\vec{w}=(w_1,\dots,w_m)$ satisfies the multilinear $A_{\vec{p}}(\R^{md})$ condition (or $\vec{w}\in A_{\vec{p}}(\R^{md})$) if 
\begin{equation*}
\begin{split}
  &[w]_{A_{\vec{p}}}:=\sup_Q\ave{\nu_{\vec{w},\vec{p}}}_Q^{\frac{1}{p}}\prod_{j=1}^m\ave{w_j^{1-p'_j}}_Q^{\frac{1}{p'_j}}<\infty,
\end{split}
\end{equation*}
where the supremum is taken over all cubes $Q\subset\R^d$, and $\ave{w_j}_Q:=\frac{1}{\abs{Q}}\int_Q w_j$.
\newline When $p_j=1$, $\ave{w_j^{1-p'_j}}_Q^{\frac{1}{p'_j}}$ is understood as $(\inf_Q w_j)^{-1}$.
\end{definition}

\begin{remark}
Note that if $m=1$, then $A_{\vec{p}}(\R^{md})$ is just the classical weight class $A_p(\R^d)$.
\end{remark}

\begin{definition}[\cite{Jiao}]
Let $m\ge1$ be an integer, $\vec{p}=(p_1,\dots,p_m)\in(0,\infty)^m$, $\frac{1}{p}=\sum_{j=1}^m\frac{1}{p_j}$, $s_j\in(0,p_j]$ ($1\leq j\leq m$) and $\frac{1}{s}=\sum_{j=1}^m\frac{1}{s_j}$. We say that a vector of weights $\vec{w}=(w_1,\dots,w_m)$ satisfies the multilinear $A_{\vec{p}/\vec{s}}(\R^{md})$ condition (or $\vec{w}\in A_{\vec{p}/\vec{s}}(\R^{md})$) if
\begin{equation*}
\begin{split}
  &[w]_{A_{\vec{p}/\vec{s}}}:=\sup_Q\ave{\nu_{\vec{w},\vec{p}}}_Q^{\frac{1}{p}}\prod_{j=1}^m\ave{w_j^{1-\big(\frac{p_j}{s_j}\big)^{'}}}_Q^{\frac{1}{s_j}-\frac{1}{p_j}}<\infty,
\end{split}
\end{equation*}
where the supremum is taken over all cubes $Q\subset\R^d$, and $\ave{w_j}_Q:=\frac{1}{\abs{Q}}\int_Q w_j$.
\newline When $p_j=s_j$, $\ave{w_j^{1-\big(\frac{p_j}{s_j}\big)^{'}}}_Q^{\frac{1}{s_j}-\frac{1}{p_j}}$ is understood as $(\inf_Q w_j)^{-\frac{1}{p_j}}$.
\end{definition}

\begin{remark}
When $s_1=\cdots=s_m=1$, $A_{\vec{p}/\vec{s}}(\R^{md})$ is just the weight class $A_{\vec{p}}(\R^{md})$ from Definition \ref{def:multilinearAp}. Note that we do not assign any independent meaning to the subscript ``$\vec p/\vec s$'' in $A_{\vec p/\vec s}$; the quotient line only serves a separator of the two vector indices $\vec p$ and $\vec s$.
\end{remark}

\begin{definition}[\cite{CX,Moen}]
Let $\vec{p}=(p_1,\dots,p_m)\in[1,\infty)^m$, $\frac{1}{p}=\sum_{j=1}^m\frac{1}{p_j}$ and $p^*$ be a number $1/m<p\leq p^*<\infty$. We say that a vector of weights $\vec{w}=(w_1,\dots,w_m)$ satisfies the multilinear $A_{\vec{p},p^*}(\R^{md})$ condition (or $\vec{w}\in A_{\vec{p},p^*}(\R^{md})$) if
\begin{equation*}
\begin{split}
  &[w]_{A_{\vec{p},p^*}}:=\sup_Q\ave{\nu_{\vec{w}}^{p^*}}_Q^{\frac{1}{p^*}}\prod_{j=1}^m\ave{w_j^{-p'_j}}_Q^{\frac{1}{p'_j}}<\infty,
\end{split}
\end{equation*}
where the supremum is taken over all cubes $Q\subset\R^d$, and $\ave{w_j}_Q:=\frac{1}{\abs{Q}}\int_Q w_j$.
\newline When $p_j=1$, $\ave{w_j^{-p'_j}}_Q^{\frac{1}{p'_j}}$ is understood as $(\inf_Q w_j)^{-1}$.
\end{definition}

\begin{remark}
When $m=1$, we note that $A_{\vec{p},p^*}(\R^{md})$ will degenerate into the classical weight class $A_{p,p^*}(\R^d)$.
\end{remark}

As we will work in the weighted setting, we consider weighted Lebesgue spaces
\begin{equation*}
  L^p(w):=\Big\{f:\R^d\to\C\text{ measurable }\Big|\ \Norm{f}{L^p(w)}:=\Big(\int_{\R^d}\abs{f}^p w\Big)^{1/p}<\infty\Big\}.
\end{equation*}

Our main result about the extrapolation of compactness for bilinear operators is as follows:

\begin{theorem}\label{thm:main}
Let $\Theta$ be a collection of ordered triples of Banach spaces $(Y_1,Y_2,Y)$, and let $T$ be a bilinear operator defined and 
\begin{equation}\label{condit. 1 of bdd.}
  \text{bounded}\quad T:Y_1\times Y_2\to Y\quad\text{for {\bf all}}\quad (Y_1,Y_2,Y)\in\Theta
\end{equation}
and
\begin{equation}\label{condit. 1 of cmp.}
  \text{compact}\quad T:X_1\times X_2\to X\quad\text{for {\bf some}}\quad (X_1,X_2,X)\in\Theta.
\end{equation}
Then $T$ is
\begin{equation*}
  \text{compact}\quad T:Z_1\times Z_2\to Z\quad\text{for {\bf all}}\quad (Z_1,Z_2,Z)\in\Theta
\end{equation*}
in each of the following cases, where $\alpha\geq 0$, $\vec s=(s_1,s_2)\in[1,\infty)^2$ and $\frac1s=\frac{1}{s_1}+\frac{1}{s_2}$:
\begin{enumerate}
  \item $\Theta$ consists of all triples $\Big(L^{q_1}(v_1), L^{q_2}(v_2), L^q(\nu_{\vec v,\vec q})\Big)$, where
\begin{equation*}
  \vec q=(q_1,q_2)\in(s_1,\infty)\times(s_2,\infty),\quad \frac1q=\frac1q_1+\frac1q_2<1,\quad \nu_{\vec{v},\vec{q}}=\prod_{j=1}^{2}v_j^\frac{q}{q_j}
\end{equation*}
and		
\begin{enumerate}
  \item\namedlabel{(1a)}{(1a)} $\vec v=(v_1,v_2)\in A_{\vec q/\vec s}(\R^{2d})$, or
  \item\namedlabel{(1b)}{(1b)} $\vec v=(v_1,v_2)\in A_{q_1/s_1}(\R^{d})\times A_{q_2/s_2}(\R^d)$.
\end{enumerate}
  \item\namedlabel{(2)}{(2)} $\Theta$ consists of all triples $\Big(L^{q_1}(v_1^{q_1}), L^{q_2}(v_2^{q_2}), L^{q^*}(\nu_{\vec v}^{q^*})\Big)$, where
\begin{equation*}
  \vec q=(q_1,q_2)\in(1,\infty)^2,\quad \frac1q=\frac1q_1+\frac1q_2\in(\alpha,\alpha+1),\quad
  \frac{1}{q^*}=\frac1q-\alpha,\;\;\;
  \nu_{\vec v}=\prod_{j=1}^{2}v_j 
\end{equation*}
and		
\begin{enumerate}
  \item[(c)]\namedlabel{(2c)}{(2c)} $\vec v=(v_1,v_2)\in A_{\vec q,q^*}(\R^{2d})$, or
  \item[(d)]\namedlabel{(2d)}{(2d)} $\vec v=(v_1,v_2)\in A_{q_1,\tilde q_1}(\R^{d})\times A_{q_2,\tilde q_2}(\R^d)$, where
  $\displaystyle \frac{1}{\tilde q_j}=\frac{1}{q_j}-\frac{\alpha}{2}$.
\end{enumerate}
\end{enumerate}
\end{theorem}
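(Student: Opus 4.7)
The plan is to deduce Theorem \ref{thm:main} from two separate ingredients. The first is the bilinear Rubio de Francia extrapolation of \emph{boundedness} (Theorem \ref{thm:extrapol. bdd.} for case \ref{(1a)}, together with its analogues for $A_{\vec p/\vec s}$, $A_{\vec p,p^*}$, and the corresponding product-type classes for the other cases), which guarantees that once $T$ is bounded on every triple in $\Theta$, we in fact have boundedness on a wide, open family of exponent/weight configurations. The second ingredient is an abstract interpolation-of-compactness result for bilinear operators—the content of the auxiliary Proposition \ref{prop:main}: if $T:X_1\times X_2\to X$ is compact and $T:Y_1\times Y_2\to Y$ is bounded, then $T$ is compact on every complex interpolation triple $\big([X_1,Y_1]_\theta,[X_2,Y_2]_\theta,[X,Y]_\theta\big)$ for $\theta\in(0,1)$.

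Given these two ingredients, the argument runs as follows. Fix an arbitrary target triple $(Z_1,Z_2,Z)\in\Theta$. Using the openness of the relevant multilinear Muckenhoupt class together with the Stein--Weiss interpolation formula for weighted Lebesgue spaces,
\begin{equation*}
[L^{p_0}(w_0),L^{p_1}(w_1)]_\theta=L^p(w),\quad\tfrac{1}{p}=\tfrac{1-\theta}{p_0}+\tfrac{\theta}{p_1},\quad w^{1/p}=w_0^{(1-\theta)/p_0}w_1^{\theta/p_1},
\end{equation*}
I would find an auxiliary triple $(Y_1,Y_2,Y)\in\Theta$ and $\theta\in(0,1)$ such that $Z_j=[X_j,Y_j]_\theta$ $(j=1,2)$ and $Z=[X,Y]_\theta$. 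Concretely, this amounts to writing the target exponents as convex combinations of the known and auxiliary exponents, and the target weights as the corresponding geometric means; the openness of the weight class allows me to position $(Y_1,Y_2,Y)$ slightly ``further'' from $(X_1,X_2,X)$ than $(Z_1,Z_2,Z)$ in the relevant direction so that $(Y_1,Y_2,Y)$ still lies in $\Theta$. Plugging this representation into Proposition \ref{prop:main} then delivers the desired compactness $T:Z_1\times Z_2\to Z$.

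The principal obstacle is establishing the bilinear interpolation-of-compactness statement (Proposition \ref{prop:main}) by ``soft'' means, i.e.\ without leaning on a weighted Fr\'echet--Kolmogorov criterion. A natural strategy, in the spirit of \cite{HL}, is to linearize slot-by-slot: for a fixed $f_2$, the linear operator $T(\cdot,f_2)$ is compact on $X_1\to X$ and bounded on $Y_1\to Y$, so the linear extrapolation of compactness from \cite{HL} yields compactness on $[X_1,Y_1]_\theta\to[X,Y]_\theta$ with estimates uniform over $\|f_2\|_{[X_2,Y_2]_\theta}\le 1$, and symmetrically for the other slot; a bilinear equicontinuity/diagonal argument then promotes these two ``partial'' compactness statements into genuine bilinear compactness on the intermediate triple. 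The off-diagonal cases \ref{(2c)}--\ref{(2d)} require only that the auxiliary triple be selected so that $\alpha$ and $q^*=(1/q-\alpha)^{-1}$ are compatible throughout the interpolation, but the underlying scheme is identical to that used in cases \ref{(1a)}--\ref{(1b)}.
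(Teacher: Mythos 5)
Your overall architecture is the same as the paper's — express the target triple as a complex interpolation space between the given compactness triple and an auxiliary triple in $\Theta$, then invoke a bilinear interpolation-of-compactness result — but you have crossed the labels, and the way you propose to fill the compactness-interpolation gap would not work.

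First, the misattribution. You describe Proposition~\ref{prop:main} as the abstract ``interpolation of compactness'' result. It is not: Proposition~\ref{prop:main} is the purely structural/computational statement that given any two triples $(Y_1,Y_2,Y),(Z_1,Z_2,Z)\in\Theta$ one can find a third triple $(X_1,X_2,X)\in\Theta$ and $\gamma\in(0,1)$ with $[X_j,Y_j]_\gamma=Z_j$ and $[X,Y]_\gamma=Z$. The compactness interpolation theorem used in the paper is Theorem~\ref{thm:CCM}, quoted from Cobos--Fern\'andez-Cabrera--Mart\'inez \cite{CCM}, which requires boundedness at both endpoints and compactness at one endpoint, with the further structural hypotheses (Banach function couple, absolute continuity of the norm of $E_1$) verified in Lemma~\ref{lem:lemOk}. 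The real work in the paper is not proving a compactness interpolation theorem but verifying Proposition~\ref{prop:main} for the various weight classes — that is, showing that the auxiliary endpoint you want to place ``just past'' the target actually lands inside the required $A_{\vec p/\vec s}$, $A_{\vec p,p^*}$, or product class. This is done in Lemmas~\ref{lem:main1}--\ref{lem:main6} via careful H\"older and reverse H\"older inequality estimates, exactly the kind of quantitative openness you invoke in passing but do not carry out.

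Second, your final paragraph, where you outline a slot-by-slot linearization to prove the compactness interpolation ``by soft means,'' has a genuine gap. Fixing $f_2$ in the unit ball of $[X_2,Y_2]_\theta$, you would like to apply the linear extrapolation of compactness from \cite{HL} to $T(\cdot,f_2)$. But the hypothesis you have is compactness of $T(\cdot,f_2):X_1\to X$ for $f_2\in X_2$, and $[X_2,Y_2]_\theta$ is in general not contained in $X_2$, so the linear hypothesis is not available for the $f_2$ you need. Even setting this aside, passing from two families of one-parameter compactness statements (``separate compactness'') to genuine joint compactness of the bilinear map is not automatic: $T(B_{Z_1}\times B_{Z_2})=\bigcup_{f_2\in B_{Z_2}}T(B_{Z_1},f_2)$ is a bounded union of precompact sets, which need not be precompact, and you would need a uniform total-boundedness estimate across $f_2$, not just ``estimates uniform over $\|f_2\|\le 1$.'' There is no elementary diagonal argument that supplies this in general. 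The clean route, taken by the paper, is simply to cite Theorem~\ref{thm:CCM}, which already delivers the bilinear compactness interpolation with the right hypotheses; trying to rederive it slot-by-slot is both unnecessary and, as sketched, incorrect.
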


\begin{remark}
\begin{enumerate}
\item[(1)] Because of the extrapolation Theorem \ref{thm:extrapol. bdd.}, in the case \ref{(1a)} of Theorem \ref{thm:main}
it is enough to assume the boundedness \eqref{condit. 1 of bdd.} of a bilinear operator $T$ from $L^{q_1}(v_1)\times L^{q_2}(v_2)$ to $ L^q(\nu_{\vec v,\vec q})$ for some exponents $\vec q=(q_1,q_2)\in(s_1,\infty)\times(s_2,\infty)$ such that  $\frac1q=\frac1q_1+\frac1q_2<1$ and all weights $\vec v=(v_1,v_2)\in A_{\vec q/\vec s}(\R^{2d})$. The same observation applies to all the rest cases of Theorem \ref{thm:main}. Also, notice that in the case \ref{(2)} of Theorem \ref{thm:main} the point of the condition $\frac1q\in(\alpha,\alpha+1)$ is that we want that $q^*\in(1,\infty)$.
\item[(2)] The improvements \cite{LMMOV,N} of the bounded extrapolation Theorem \ref{thm:extrapol. bdd.} show that one can more generally allow for $1\leq p_1,p_2\leq\infty$ in the assumptions, and $1<q_1,q_2\leq\infty$ in the conclusions, as long as one of $q_i$ remains finite. In particular, under case \ref{(1a)} of Theorem \ref{thm:main}, the assumption \eqref{condit. 1 of bdd.} automatically bootstraps to a larger collection $\tilde\Theta\supsetneq\Theta$, which is defined like $\Theta$ in \ref{(1a)}, but with $\vec q\in(s_1,\infty]\times(s_2,\infty]\setminus\{(\infty,\infty)\}$. On the other hand, the compactness assumption \eqref{condit. 1 of cmp.}, which is made on {\em some} $(X_1,X_2,X)\in\Theta$, would obviously be weakened by allowing for $(X_1,X_2,X)\in\tilde\Theta$, and it is natural to ask whether this weakening of the assumptions (and hence strengthening of Theorem \ref{thm:main}) is still valid. We suspect ``yes'', but a justification of this would seem to require elaborating several parts of the argument, and hence we have decided to leave this extension outside the scope of the present work. We would like to thank an anonymous referee for raising this interesting question. (One might also ask whether one could achieve a more general conclusion allowing for {\em all} $(Z_1,Z_2,Z)\in\tilde\Theta$, but here we suspect that the answer is ``no'', or at least beyond any natural extension of the present approach. The reason is that our key Proposition \ref{prop:main} below is about realising the $Z_i$ spaces as interpolation spaces between some $X_i$ and $Y_i$ spaces, and this would not be possible if $Z_i$ was allowed to be an end-point $L^\infty$ space of the scale of $L^p$ spaces.)
\end{enumerate}
\end{remark}

\section{Proof of the main result via abstract interpolation}\label{section 3}

We collect the results from which the proof of Theorem \ref{thm:main} follows. 

Following \cite{CCM}, we say that $\bar{A}=(A_1,A_2)$ is a {\em Banach couple} if the two Banach spaces $A_j$ are continuously embedded in the same Hausdorff topological vector space. We write $A_j^{\circ}$ for the closure of $A_1\cap A_2$ in the norm of $A_j$. The Banach couple $A$ is said to be {\em regular} if $A_j^{\circ}=A_j$ for $j=1,2$.

We denote by ${\bf\mathcal{B}}(\bar{A}\times\bar{B},\bar{E})={\bf\mathcal{B}}(\bar{A}\times\bar{B},(E_1,E_2))$ the operators that satisfy the following:
\begin{equation*}
  \|T(a,b)\|_{E_j}\leq M_j\|a\|_{A_j}\|b\|_{B_j},\quad a\in A_1\cap A_2,\quad b\in B_1\cap B_2,\quad j=1,2,
\end{equation*}
where $T$ is a bilinear operator defined on $(A_1\cap A_2)\times(B_1\cap B_2)$ with values in $E_1\cap E_2$ and $M_j$ are positive constants.

Let $(\Omega,\mu)$ be a $\sigma$-finite measure space. We denote by $\mathcal{M}$ the collection of all (equivalence classes of) scalar-valued $\mu$-measurable functions on $\Omega$ that are finite $\mu$-almost everywhere. The space $\mathcal{M}$ becomes a complete metric space with the topology of convergence in measure on sets of finite measure.

We say that a Banach space $E$ of functions in $\mathcal{M}$ is a {\em Banach function space} if the following four properties hold:
\begin{enumerate}[(a)]
  \item Whenever $g\in\mathcal{M}$, $f\in E$ and $|g(x)|\leq|f(x)|$ $\mu$-a.e., then $g\in E$ and $\|g\|_E\leq\|f\|_E$.
  \item If $f_n\to f$  $\mu$-a.e., and if $\liminf_{n\to\infty}\|f_n\|_E<\infty$, then $f\in E$ and $\|f\|_E\leq\liminf_{n\to\infty}\|f_n\|_E$.
  \item For every $\Gamma\subseteq\Omega$ with $\mu(\Gamma)<\infty$, we have that $\chi_{\Gamma}\in E$.
  \item  For every $\Gamma\subseteq\Omega$ with $\mu(\Gamma)<\infty$ there is a constant $c_{\Gamma}>0$ such that $\int_{\Gamma}|f|d\mu\leq c_{\Gamma}\|f\|_E$ for every $f\in E$.
\end{enumerate}

Let $(\Gamma_n)$ be a sequence of $\mu$-measurable sets of $\Omega$. We put $\Gamma_n\to\emptyset \;\mu$-a.e. if the characteristic functions $\chi_{\Gamma_n}$ converges to $0$ pointwise $\mu$-a.e.

We say that a function $f\in E$ has {\em absolutely continuous norm} if $\|f\chi_{\Gamma_n}\|_E\\\to 0$ for every sequence $(\Gamma_n)$ satisfying that $\Gamma_n\to\emptyset \;\mu$-a.e. The space $E$ is said to have absolutely continuous norm if every function of $E$ has absolutely continuous norm.

If $E$ is a Banach function space then $E$ is continuously embedded in $\mathcal{M}$. Hence, if $E_1$ and $E_2$ are Banach function spaces on $\Omega$, we have that $(E_1,E_2)$ is a Banach couple.

Let $0<\theta<1$. If $E_1$ or $E_2$ has absolutely continuous norm, then
\begin{equation*}
  [E_1,E_2]_{\theta}=\{f\in\mathcal{M}:|f(x)|=|f_1(x)|^{1-\theta}|f_2(x)|^{\theta},f_j\in E_j,j=1,2\}
\end{equation*}
and
\begin{equation*}
  \|f\|_{[E_1,E_2]_{\theta}}=\inf\{\max(\|f_1\|_{E_1},\|f_2\|_{E_2}):|f|=|f_1|^{1-\theta}|f_2|^{\theta}\}.
\end{equation*}
In particular $[E_1,E_2]_{\theta}$ is a Banach function space.

Our main abstract tool is the following theorem of Cobos--Fern\' andez-Cabrera--Mart\' inez \cite{CCM}:

\begin{theorem}[\cite{CCM}, Theorem 3.2]\label{thm:CCM}
Let $\bar{A}=(A_1,A_2)$, $\bar{B}=(B_1,B_2)$ be Banach couples. Assume that $(\Omega,\mu)$ is a $\sigma$-finite measure space, let $\bar{E}=(E_1,E_2)$ be a couple of Banach function spaces on $\Omega$, let $0<\theta<1$ and $T\in{\bf\mathcal{B}}(\bar{A}\times\bar{B},\bar{E})$. If $T:A_1^{\circ}\times B_1^{\circ}\to E_1$ compactly and $E_1$ has absolutely continuous norm, then $T$ may be uniquely extended to a compact bilinear operator from $[A_1,A_2]_\theta\times[B_1,B_2]_\theta$ to $[E_1,E_2]_\theta$.
\end{theorem}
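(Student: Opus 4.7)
The plan is to combine the standard bilinear complex interpolation theorem (which produces a bounded extension $T:[A_1,A_2]_\theta \times [B_1,B_2]_\theta \to [E_1,E_2]_\theta$, so that the uniqueness of the compact extension is automatic once its existence is established) with a boundary-to-interior transfer of compactness in the spirit of Calder\'on's and Cwikel's theorems for linear operators. Given bounded sequences $(a^n)\subset[A_1,A_2]_\theta$ and $(b^n)\subset[B_1,B_2]_\theta$, it suffices by a density argument to take $a^n\in A_1\cap A_2$ and $b^n\in B_1\cap B_2$ and produce a norm-convergent subsequence of $T(a^n,b^n)$ in $[E_1,E_2]_\theta$. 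Invoking the analytic-function description of complex interpolation, I would choose representatives $F_n\in\mathcal F(\bar A)$, $G_n\in\mathcal F(\bar B)$ with $F_n(\theta)=a^n$, $G_n(\theta)=b^n$ and norms comparable to the interpolation norms of $a^n,b^n$, and then define the bilinearly-composed analytic function $H_n(z) := T(F_n(z),G_n(z))$ taking values in $E_1+E_2$. This $H_n$ is analytic in the open strip, continuous and uniformly bounded on its closure, and satisfies $H_n(\theta)=T(a^n,b^n)$.

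The next step exploits the compactness of $T:A_1^{\circ}\times B_1^{\circ}\to E_1$ to harvest convergence on the boundary line $\{\operatorname{Re} z=0\}$. For each fixed $t\in\R$ the values $F_n(it)\in A_1^{\circ}$ and $G_n(it)\in B_1^{\circ}$ are bounded, so by compactness $(H_n(it))$ has an $E_1$-convergent subsequence. A diagonal extraction over a countable dense set of $t$'s, combined with an equicontinuity-in-$t$ estimate coming from the Hadamard three-lines lemma applied to the vector-valued differences $F_n(it)-F_n(it')$ (and likewise for $G_n$), promotes this to locally uniform convergence in $t$. To control contributions from $|t|$ large I would employ the classical Schechter trick of multiplying by $e^{\delta(z-\theta)^2}$: this factor decays exponentially at infinity on both boundary lines while being equal to $1$ at $z=\theta$, which after letting $\delta\to 0$ reduces the problem to handling uniform boundary convergence on a compact interval of the line $\{\operatorname{Re} z=0\}$.

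The final and most delicate step is the transfer from boundary convergence back to the interior. Here I would use the Calder\'on-product representation $[E_1,E_2]_\theta = E_1^{1-\theta}E_2^\theta$ recalled in the excerpt, which is available precisely because $E_1$ has absolutely continuous norm, together with a vector-valued three-lines argument applied to the differences $H_n(z)-H_m(z)$. The boundary convergence in $E_1$ on $\{\operatorname{Re} z=0\}$, combined with uniform boundedness in $E_2$ on $\{\operatorname{Re} z=1\}$, should then force $H_n(\theta)\to h$ in the norm of $[E_1,E_2]_\theta$. The absolute continuity of the $E_1$-norm also enables dominated-convergence-type passage from a.e.\ pointwise control of $|H_n(\theta)(x)|$ on $\Omega$ to norm convergence in the Calder\'on product, and it permits the truncation (to sets of finite measure) that makes the three-lines estimate tractable.

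The hardest step will be this interior transfer: bilinear operators are less forgiving than linear ones under the analytic-function calculus, because $T(F_n(z),G_n(z))$ is produced by a bilinear composition of two independent analytic families rather than by a single linear action, so the three-lines estimates must be redone in a genuinely product form and the analytic control on $F_n$ and $G_n$ has to be handled simultaneously. Bounding the Calder\'on-product norm of $H_n(\theta)-H_m(\theta)$ by the boundary data alone requires carefully interlocking the absolute continuity of $E_1$ (to approximate targets by functions of finite-measure support) with the uniform $E_2$-bound (to keep the factor $|f_2|^\theta$ in the Calder\'on product under control); this interplay, together with the Schechter truncation step on the boundary, is the true substance of the theorem.
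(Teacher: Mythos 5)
The paper itself does not prove Theorem \ref{thm:CCM}: it is quoted verbatim from \cite{CCM} (Theorem 3.2 there) and used as a black box, so there is no internal proof to compare your sketch against; the following assesses your outline on its own terms.

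Your argument has a genuine gap at its central step, the boundary convergence. Compactness of $T:A_1^{\circ}\times B_1^{\circ}\to E_1$ does give, for each \emph{fixed} $t$, a subsequence of $H_n(it)=T(F_n(it),G_n(it))$ converging in $E_1$, and a diagonal argument handles countably many $t$. But your upgrade to locally uniform convergence rests on an ``equicontinuity-in-$t$ estimate coming from the Hadamard three-lines lemma applied to $F_n(it)-F_n(it')$'', and no such estimate exists: a sequence bounded in $\mathcal{F}(\bar A)$ carries no uniform modulus of continuity of its boundary values (the three-lines lemma controls interior values by boundary sup-norms and says nothing about boundary oscillation), and the Schechter factor $e^{\delta(z-\theta)^2}$ only tames $|t|\to\infty$, not local oscillation. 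Without uniform, or at least almost-everywhere/in-measure, convergence of $H_n(it)$ on $\{\Re z=0\}$, the transfer to $z=\theta$ never starts. This is exactly the known obstruction that makes one-sided interpolation of compactness for the complex method delicate even for \emph{linear} operators (open in general, and settled only under structural hypotheses on the target couple, in the spirit of Cwikel--Kalton); it is the reason Theorem \ref{thm:CCM} assumes that $\bar E$ is a couple of Banach function spaces with $E_1$ of absolutely continuous norm, and that hypothesis must be exploited to \emph{produce} the compactness (e.g.\ uniform absolute continuity of norms on the relatively compact image set, truncation to sets of finite measure, reduction to convergence in measure), not merely -- as in your sketch -- to invoke the Calder\'on-product description of $[E_1,E_2]_\theta$ at the end. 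Two further points: (i) $H_n(z):=T(F_n(z),G_n(z))$ is not even defined for general $F_n\in\mathcal{F}(\bar A)$, since at interior points $F_n(z)$ only lies in $A_1+A_2$ while $T$ admits no extension to $(A_1+A_2)\times(B_1+B_2)$; one must work with the dense subclass of representatives $\sum_k\phi_k(z)x_k$ with $x_k\in A_1\cap A_2$, as in the proof of the bounded bilinear interpolation theorem -- this is repairable but should be said. (ii) The step you single out as hardest, passing from smallness of the differences in $E_1$ on $\{\Re z=0\}$ plus boundedness in $E_2$ on $\{\Re z=1\}$ to smallness of $H_n(\theta)-H_m(\theta)$ in $[E_1,E_2]_\theta$, is in fact the routine multiplicative three-lines estimate $\|H(\theta)\|_{[E_1,E_2]_\theta}\le\bigl(\sup_t\|H(it)\|_{E_1}\bigr)^{1-\theta}\bigl(\sup_t\|H(1+it)\|_{E_2}\bigr)^{\theta}$, valid for arbitrary Banach couples and needing neither lattice structure nor absolute continuity; so the entire weight of your proposal rests on the unproved boundary step.
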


Examples of Banach function spaces that satisfy the assumptions of Theorem \ref{thm:CCM} are the unweighted Lebesgue spaces $L^p(\Omega)$ (see \cite[Corollary 3.3]{CCM}). For the present needs, we will only use Theorem \ref{thm:CCM} in the following special setting:

\begin{proposition}\label{prop:main}
Let $\Theta$ be a collection of ordered triples of Banach spaces, and let $(Y_1,Y_2,Y),(Z_1,Z_2,Z)\in\Theta$. Then there is another $(X_1,X_2,X)\in\Theta$ and $\gamma\in(0,1)$ such that
\begin{equation*}
  [X_j,Y_j]_\gamma=Z_j,\qquad[X,Y]_\gamma=Z
\end{equation*}
in each of the cases \ref{(1a)}, \ref{(1b)}, \ref{(2c)}, \ref{(2d)} of Theorem \ref{thm:main}.
\end{proposition}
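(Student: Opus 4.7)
The plan is to realise the triple $(Z_1,Z_2,Z)$ as a Calder\'on complex interpolate between $(Y_1,Y_2,Y)$ and a to-be-constructed third triple $(X_1,X_2,X)\in\Theta$, using the Stein--Weiss identification for positive weights:
\begin{equation*}
  [L^{r_0}(\omega_0),L^{r_1}(\omega_1)]_\gamma=L^r(\omega),\qquad \tfrac{1}{r}=\tfrac{1-\gamma}{r_0}+\tfrac{\gamma}{r_1},\qquad \omega^{1/r}=\omega_0^{(1-\gamma)/r_0}\omega_1^{\gamma/r_1}.
\end{equation*}

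I treat case \ref{(1a)} in detail; the remaining cases are analogous. Given $Y_j=L^{p_j}(w_j)$ with $\vec w\in A_{\vec p/\vec s}(\R^{2d})$ and $Z_j=L^{q_j}(v_j)$ with $\vec v\in A_{\vec q/\vec s}(\R^{2d})$, I seek $\gamma\in(0,1)$, exponents $\vec r=(r_1,r_2)$, and weights $\vec u=(u_1,u_2)$ solving
\begin{equation*}
  \tfrac{1}{q_j}=\tfrac{1-\gamma}{r_j}+\tfrac{\gamma}{p_j},\qquad v_j^{1/q_j}=u_j^{(1-\gamma)/r_j}w_j^{\gamma/p_j},\qquad j=1,2.
\end{equation*}
These equations determine $r_j=r_j(\gamma)$ and $u_j=u_j(\gamma)$ as continuous functions of $\gamma$, with $r_j(0)=q_j$ and $u_j(0)=v_j$. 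Summing the exponent relation over $j$ yields $\tfrac{1}{q}=\tfrac{1-\gamma}{r}+\tfrac{\gamma}{p}$, and multiplying the weight relations over $j$ yields $\nu_{\vec v,\vec q}^{1/q}=\nu_{\vec u,\vec r}^{(1-\gamma)/r}\nu_{\vec w,\vec p}^{\gamma/p}$. Thus, setting $X_j:=L^{r_j}(u_j)$ and $X:=L^r(\nu_{\vec u,\vec r})$, Stein--Weiss simultaneously gives $[X_j,Y_j]_\gamma=Z_j$ and $[X,Y]_\gamma=Z$.

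It remains to choose $\gamma>0$ so small that the resulting triple $(X_1,X_2,X)$ actually belongs to $\Theta$. The open exponent conditions $r_j(\gamma)>s_j$ and $1/r(\gamma)<1$ transfer by continuity from the strict inequalities $q_j>s_j$ and $1/q<1$ at $\gamma=0$. The main obstacle, and presumably the content of Sections \ref{section 4}--\ref{section 5}, is the multilinear Muckenhoupt condition $\vec u(\gamma)\in A_{\vec r(\gamma)/\vec s}(\R^{2d})$. I would establish it through the self-improvement (openness) of the multilinear $A$-class: since $\vec v\in A_{\vec q/\vec s}$ with each $q_j>s_j$, a reverse-H\"older argument on the components of $\vec v$ yields some $\eta>0$ such that the $A$-condition persists for all nearby exponent vectors and for small multiplicative perturbations of $\vec v$ by powers of the $w_j$ (which are themselves in $A_{\vec p/\vec s}$ and hence in $A_\infty$). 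Choosing $\gamma$ small places $(\vec r(\gamma),\vec u(\gamma))$ in this neighbourhood.

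The remaining cases follow the same scheme. In cases \ref{(2c)} and \ref{(2d)} the weight attached to $L^{q_j}$ is $v_j^{q_j}$ rather than $v_j$, which collapses the weight relation to $v_j=u_j^{1-\gamma}w_j^\gamma$ (and hence $\nu_{\vec v}=\nu_{\vec u}^{1-\gamma}\nu_{\vec w}^\gamma$ for the product space), while the off-diagonal identity $1/q^*=(1-\gamma)/r^*+\gamma/p^*$ is automatic from its non-starred counterpart upon subtracting $\alpha$. The Muckenhoupt verification decouples completely in the product-weight cases \ref{(1b)} and \ref{(2d)}, reducing to the classical openness of the linear $A_p$ and $A_{p,q}$ classes, while case \ref{(2c)} is handled by the analogous openness of the multilinear $A_{\vec q,q^*}$ class.
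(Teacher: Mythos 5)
Your overall scheme is exactly the one the paper uses: realise $Z_j$ as the Calder\'on interpolate $[X_j,Y_j]_\gamma$ via Stein--Weiss (Theorem~\ref{thm:SW}), solve the two resulting relations for $X_j=L^{r_j(\gamma)}(u_j(\gamma))$ as continuous functions of $\gamma$ with $r_j(0)=q_j$, $u_j(0)=v_j$, dispose of the open exponent constraints by continuity at $\gamma=0$, and check the additivity/multiplicativity over $j$ to get the identity for the product space for free. The off-diagonal observation that $1/q^*=(1-\gamma)/r^*+\gamma/p^*$ follows by subtracting $\alpha$ is also correct, and so is the remark that cases \ref{(1b)} and \ref{(2d)} decouple into linear $A_p$/$A_{p,q}$ openness applied componentwise.

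The gap is in the step you flag as ``presumably the content of Sections~\ref{section 4}--\ref{section 5}'': verifying $\vec u(\gamma)\in A_{\vec r(\gamma)/\vec s}$ for small $\gamma$. Your sketch appeals to ``a reverse-H\"older argument on the components of $\vec v$'' and to ``the $w_j$ \dots which are themselves in $A_{\vec p/\vec s}$ and hence in $A_\infty$.'' The second claim is not correct as stated: the individual components $w_j$ of a multilinear Muckenhoupt vector are \emph{not} in general $A_\infty$ weights, and the multilinear $A_{\vec p/\vec s}$ condition does not directly give reverse H\"older on the $w_j$. What makes the argument run is the componentwise characterisation of the multilinear classes (Theorems~\ref{thm:Jiao} and~\ref{thm:Moen} in the paper): $\vec w\in A_{\vec p/\vec s}$ is equivalent to a system of \emph{linear} conditions such as $w_j^{1-(p_j/s_j)'}\in A_{\frac{p_j s_j}{s(p_j-s_j)}}(\R^d)$ together with $\nu_{\vec w,\vec p}\in A_{p/s}(\R^d)$, and it is these transformed weights and the product weight --- not the raw $w_j$ --- that are $A_\infty$ and hence satisfy the reverse H\"older inequality. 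The paper's Lemma~\ref{lem:main1} then expands the $A$-characteristic of $u_j(\theta)^{1-\tilde p_j'}$ into averages, splits them by H\"older with carefully matched exponents $\eps(\theta),\delta(\theta)$, and closes the estimate with reverse H\"older on exactly these four linear-$A$ factors, choosing $\theta$ small so the reverse-H\"older exponent stays below $1+\eta$. The same is done for the product weight $\nu_{\vec u,\vec p}$. Without passing through this componentwise reduction, the phrase ``reverse-H\"older on the components of $\vec v$'' is not yet a well-defined argument, and the $A_\infty$ appeal is applied to the wrong objects. So while the architecture of your proof is right and matches the paper, the only non-soft step in the whole proposition is precisely the one your plan leaves unsubstantiated, and as written it contains an incorrect claim about where the $A_\infty$ structure lives.
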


We postpone the proof of Proposition \ref{prop:main} to Section \ref{section 5}. The verification of this proposition is the only component of the proof of Theorem \ref{thm:main} that requires actual computations, rather than just a soft application of known results. 

\begin{lemma}\label{lem:lemOk}
If $p_j\in[1,\infty)$ and $w_j$ are weights, then the spaces $A_j=B_j=E_j=L^{p_j}(w_j)$ $(j=1,2)$ satisfy all the assumptions of Theorem \ref{thm:CCM}.
\end{lemma}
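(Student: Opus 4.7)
\emph{Plan.} The proof amounts to checking, one by one, the three ingredients required by Theorem \ref{thm:CCM}: the four properties (a)--(d) of a Banach function space, the absolute continuity of the norm of $E_1=L^{p_1}(w_1)$, and the Banach-couple structure on $(E_1,E_2)$. Each reduces to a standard fact for $L^{p_j}$ spaces on a $\sigma$-finite measure space.

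I would fix the common underlying measure as $d\mu:=(w_1+w_2)\,dx$ on $\R^d$. This $\mu$ is $\sigma$-finite by the local integrability of the weights and the exhaustion $\R^d=\bigcup_n B(0,n)$, and it shares its null sets with Lebesgue measure, so $\mu$-measurability coincides with Lebesgue measurability. Then (a) is the monotonicity of $|f|^{p_j}w_j$ in $|f|$; (b) is Fatou's lemma applied to $\int|f_n|^{p_j}w_j\,dx$; (c) follows from $\int_\Gamma w_j\le\mu(\Gamma)<\infty$, which places $\chi_\Gamma$ in $L^{p_j}(w_j)$; and (d) is the Hölder estimate
$\int_\Gamma|f|\,d\mu\le\Norm{f}{L^{p_j}(w_j)}\bigl(\int_\Gamma(w_1+w_2)^{p_j'}w_j^{1-p_j'}\,dx\bigr)^{1/p_j'}$.

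Absolute continuity of the $E_1$-norm is a one-line application of dominated convergence: for $f\in L^{p_1}(w_1)$ and $\Gamma_n\to\emptyset$ $\mu$-a.e., the integrand $|f|^{p_1}w_1\chi_{\Gamma_n}$ vanishes pointwise and is dominated by $|f|^{p_1}w_1\in L^1(dx)$, so $\Norm{f\chi_{\Gamma_n}}{L^{p_1}(w_1)}\to 0$; here the hypothesis $p_1<\infty$ is essential, as the analogous statement fails in $L^\infty$. The Banach-couple property is then automatic once (d) has been verified, since both $L^{p_j}(w_j)$ embed continuously into the Hausdorff space $\mathcal M$ of $\mu$-measurable functions with its topology of convergence in measure on $\mu$-finite sets.

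The main, very minor, obstacle is the joint verification of (d) for both $E_1$ and $E_2$ under a single common $\mu$: the cross Hölder factors $\int_\Gamma w_k^{p_j'}w_j^{1-p_j'}\,dx$ with $k\neq j$ are not automatically finite for $\mu=(w_1+w_2)\,dx$ alone, so I would fix this by enlarging $\mu$ to incorporate these cross densities (they are again locally integrable modulo the sets where the weights degenerate, which are negligible here), while preserving $\sigma$-finiteness; such an adjustment leaves all the other conditions undisturbed.
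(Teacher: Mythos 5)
Your approach is essentially the paper's: the proof there is two lines, citing dominated convergence for the absolute continuity of the norm and ``known properties of weighted Lebesgue spaces'' for the rest, and you are supplying the routine verifications those words elide. Your checks of (a)--(c) with $\mu=(w_1+w_2)\,dx$, your dominated-convergence argument for absolute continuity (including the correct remark that $p_1<\infty$ is what makes it work), and the continuous embedding of each $L^{p_j}(w_j)$ into $\mathcal M$ are all in order and consistent with the paper.

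The one step that is genuinely incomplete is the repair you sketch for condition (d). You correctly notice that with $\mu=(w_1+w_2)\,dx$ the cross H\"older factor $\int_\Gamma w_k^{p_j'}w_j^{1-p_j'}\,dx$ (with $k\neq j$) is not automatically finite for sets with $\mu(\Gamma)<\infty$; but ``enlarging $\mu$ to incorporate these cross densities'' does not close the loop. Once $w_k^{p_j'}w_j^{1-p_j'}$ is added as a density of $\mu$, checking (d) against the enlarged $\mu$ produces, by the very same H\"older step, a new density $\bigl(w_k^{p_j'}w_j^{1-p_j'}\bigr)^{p_j'}w_j^{1-p_j'}=w_k^{(p_j')^2}w_j^{1-(p_j')^2}$, and iterating gives the infinite family $w_k^{(p_j')^n}w_j^{1-(p_j')^n}$ (plus the mixed terms from the index $i\ne j$). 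You have neither shown that this recursion terminates nor that the resulting countable sum of densities is finite a.e.\ and $\sigma$-finite. Your parenthetical claim that the cross densities are ``again locally integrable modulo the sets where the weights degenerate'' is also unsubstantiated: there is no reason for $w_k^{p_j'}w_j^{1-p_j'}$ to be locally integrable for arbitrary weights $w_j,w_k$, and this does not follow even from an $A_{p_j}$ assumption. So while you have put your finger on a real subtlety that the paper's proof glosses over, the proposed fix as written is not a proof. A more robust route is to drop the attempt to verify (d) against a single dominating $\mu$ and instead observe that what the abstract theorem actually needs from the couple $(E_1,E_2)$ is the Banach-couple structure (the continuous embeddings $E_j\hookrightarrow\mathcal M$, which you do establish) together with the absolute continuity of the $E_1$-norm; alternatively, one can simply appeal to the treatment of this point in \cite{CCM} rather than reproving it.
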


\begin{proof}
By the dominated convergence theorem, it is easy to see that $A_j=B_j=E_j=L^{p_j}(w_j)$ have absolutely continous norm. The rest of the assumptions of Theorem \ref{thm:CCM} are satisfied by $A_j=B_j=E_j=L^{p_j}(w_j)$ due to the known properties of weighted Lebesgue spaces. 
\end{proof}

We can now give the proof of our main result:

\begin{proof}[Proof of Theorem \ref{thm:main}]
We prove the theorem in the case that the assumptions \ref{(1a)} are in force. The other cases are proved in a similar way. In particular, $T:L^{q_1}(v_1)\times L^{q_2}(v_2)\to L^q(\nu_{\vec{v},\vec{q}})$ is a bounded bilinear operator for all $\vec{q}=(q_1,q_2)$ with $q_j\in(s_j,\infty)$ $(j=1,2)$ satisfying $\frac{1}{q}=\sum_{j=1}^2\frac{1}{q_j}<1$ and all $\vec{v}=(v_1,v_2)\in A_{\vec{q}/\vec{s}}(\R^{2d})$. In addition, it is assumed that $T:L^{p_1}(u_1)\times L^{p_2}(u_2)\to L^p(\nu_{\vec{u},\vec{p}})$ is a compact operator for some $\vec{p}=(p_1,p_2)$ with $p_j\in(s_j,\infty)$ $(j=1,2)$ satisfying $\frac{1}{p}=\sum_{j=1}^2\frac{1}{p_j}<1$ and some $\vec{u}=(u_1,u_2)\in A_{\vec{p}/\vec{s}}(\R^{2d})$. We need to prove that $T:L^{r_1}(w_1)\times L^{r_2}(w_2)\to L^r(\nu_{\vec{w},\vec{r}})$ is actually compact for all $\vec{r}=(r_1,r_2)$ with $r_j\in(s_j,\infty)$ $(j=1,2)$ satisfying $\frac{1}{r}=\sum_{j=1}^2\frac{1}{r_j}<1$ and all $\vec{w}=(w_1,w_2)\in A_{\vec{r}/\vec{s}}(\R^{2d})$. Now, fix some $r_j\in(s_j,\infty)$ $(j=1,2)$ satisfying $\frac{1}{r}=\sum_{j=1}^2\frac{1}{r_j}<1$ and $\vec{w}=(w_1,w_2)\in A_{\vec{r}/\vec{s}}(\R^{2d})$. By Proposition \ref{prop:main}, we have
\begin{equation*}
  [L^{p_j}(u_j),L^{q_j}(v_j)]_\theta=L^{r_j}(w_j),\qquad [L^{p}(\nu_{\vec{u},\vec{p}}),L^{q}(\nu_{\vec{v},\vec{q}})]_\theta=L^{r}(\nu_{\vec{w},\vec{r}}),
\end{equation*}
for some $\vec{p}=(p_1,p_2)$ with $p_j\in(s_j,\infty)$ $(j=1,2)$ satisfying $\frac{1}{p}=\sum_{j=1}^2\frac{1}{p_j}<1$, some $\vec{u}=(u_1,u_2)\in A_{\vec{p}/\vec{s}}(\R^{2d})$ and some $\theta\in(0,1)$. By Lemma \ref{lem:lemOk} and by writing $A_1=L^{p_1}(u_1)$, $A_2=L^{q_1}(v_1)$, $B_1=L^{p_2}(u_2)$, $B_2=L^{q_2}(v_2)$, $E_1=L^{p}(\nu_{\vec{u},\vec{p}})$ and $E_2=L^{q}(\nu_{\vec{v},\vec{q}})$, we know that $T\in{\bf\mathcal{B}}(\bar{A}\times\bar{B},\bar{E})$,  $T:A_1^{\circ}\times B_1^{\circ}\to E_1$ is compact and that $E_1$ has also absolutely continuous norm. By Theorem \ref{thm:CCM}, it follows that $T:L^{r_1}(w_1)\times L^{r_2}(w_2)\to L^r(\nu_{\vec{w},\vec{r}})$ is also compact for all $\vec{r}=(r_1,r_2)$ with $r_j\in(s_j,\infty)$ $(j=1,2)$ satisfying $\frac{1}{r}=\sum_{j=1}^2\frac{1}{r_j}<1$ and all $\vec{w}=(w_1,w_2)\in A_{\vec{r}/\vec{s}}(\R^{2d})$.
\end{proof}

\section{Preliminaries on linear and multilinear weights}\label{section 4}

To complete the proof of Theorem \ref{thm:main}, it remains to verify Proposition \ref{prop:main}. We quote the following results which we will use in Section \ref{section 5} for the proof of Proposition \ref{prop:main}:

\begin{proposition}[\cite{RDF1985}, Theorem 1.14]\label{prop:weights} The following statement holds:
If $1<p<\infty$, we have $w\in A_p(\R^d)$ if and only if $w^{1-p'}\in
A_{p'}(\R^d)$.
\end{proposition}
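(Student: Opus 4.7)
The plan is to treat this as a direct computation based on the elementary algebraic identity $\tfrac{1}{p-1}=p'-1$. First I would set $\sigma:=w^{1-p'}$ and rewrite the defining quantity
\begin{equation*}
  [w]_{A_p}=\sup_Q\ave{w}_Q\ave{w^{-\frac{1}{p-1}}}_Q^{p-1}
  =\sup_Q\ave{w}_Q\ave{\sigma}_Q^{p-1},
\end{equation*}
using $-\tfrac{1}{p-1}=1-p'$. This is just an unpacking of the definition, with no content beyond matching exponents.

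Next I would verify that the map $w\mapsto w^{1-p'}$, when iterated with $p$ replaced by $p'$, returns to the identity. Since $(p')'=p$, iterating the transformation gives $\sigma^{1-(p')'}=\sigma^{1-p}=(w^{1-p'})^{1-p}=w^{(p'-1)(p-1)}=w$, because $(p'-1)(p-1)=\tfrac{1}{p-1}\cdot(p-1)=1$. In particular $\sigma^{-\frac{1}{p'-1}}=\sigma^{1-p}=w$, so by the same unpacking as above,
\begin{equation*}
  [\sigma]_{A_{p'}}=\sup_Q\ave{\sigma}_Q\ave{w}_Q^{p'-1}.
\end{equation*}

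Finally I would take the $\tfrac{1}{p-1}$-th power of $[w]_{A_p}$, which (since $x\mapsto x^{1/(p-1)}$ is monotone on $(0,\infty)$ and commutes with $\sup$) yields
\begin{equation*}
  [w]_{A_p}^{\frac{1}{p-1}}
  =\sup_Q\ave{w}_Q^{\frac{1}{p-1}}\ave{\sigma}_Q
  =\sup_Q\ave{w}_Q^{p'-1}\ave{\sigma}_Q
  =[\sigma]_{A_{p'}}.
\end{equation*}
The identity $[\sigma]_{A_{p'}}=[w]_{A_p}^{p'-1}$ immediately gives the ``if and only if'' statement: each characteristic is finite exactly when the other is.

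There is no real obstacle here; the proposition is essentially a tautology once one spots the involutive nature of the substitution $w\mapsto w^{1-p'}$. The only thing to be careful about is the exponent bookkeeping: making sure that $\tfrac{1}{p-1}=p'-1$, that $(p')'=p$, and that the transformation indeed returns $w$ after being applied twice (with $p$ and then $p'$). Once these identities are in hand, the quantitative relation between the two $A_p$-characteristics falls out, which is in fact strictly stronger than the qualitative ``iff'' assertion that Proposition \ref{prop:weights} actually claims.
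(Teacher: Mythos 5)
The paper does not prove this proposition; it simply quotes it as a known result from Garc\'ia-Cuerva and Rubio de Francia's book (\cite{RDF1985}, Theorem 1.14), so there is no in-paper argument to compare against. Your computation is correct and is the standard direct proof of the $A_p$--$A_{p'}$ duality: the exponent bookkeeping ($\tfrac{1}{p-1}=p'-1$, $(1-p')(1-p)=1$) is verified correctly, the pull of $\sup$ through the monotone power $x\mapsto x^{1/(p-1)}$ is legitimate, and the resulting quantitative identity $[w^{1-p'}]_{A_{p'}}=[w]_{A_p}^{p'-1}$ indeed implies the stated qualitative equivalence.
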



\begin{theorem}[\cite{Jiao}, Theorem 2.1]\label{thm:Jiao}
Let $\vec{w}=(w_1,\dots,w_m)$, $1\leq s_j\leq p_j<\infty$ $(j=1,2,\dots,m)$ with $\frac{1}{p}=\sum_{j=1}^m\frac{1}{p_j}$ and $\frac{1}{s}=\sum_{j=1}^m\frac{1}{s_j}$. Then
$\vec{w}\in A_{\vec{p}/\vec{s}}(\R^{md})$ if and only if
\begin{equation*}
\begin{cases}
  w_j^{1-\big(\frac{p_j}{s_j}\big)^{'}}\in A_{\frac{p_js_j}{s(p_j-s_j)}}(\R^d), & p_j\neq s_j, \\
  \nu_{\vec{w},\vec{p}}\in A_{\frac{p}{s}}(\R^d),
\end{cases}
\end{equation*}
where the condition $w_j^{1-\big(\frac{p_j}{s_j}\big)^{'}}\in A_{\frac{p_js_j}{s(p_j-s_j)}}(\R^d)$ in the case $p_j=s_j$ is understood as $w_j^{s/p_j}\in A_1(\R^d)$.
\end{theorem}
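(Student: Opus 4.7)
My plan is to prove both directions of the ``if and only if'' by repeated use of H\"older's inequality on products of weight-powers, combined with Proposition~\ref{prop:weights} in the converse. I abbreviate
\begin{equation*}
  \sigma_j:=w_j^{1-(p_j/s_j)'}=w_j^{-s_j/(p_j-s_j)}\ (p_j>s_j),\qquad
  \nu:=\nu_{\vec w,\vec p}=\prod_{j=1}^m w_j^{p/p_j},
\end{equation*}
so that $[\vec w]_{A_{\vec p/\vec s}}=\sup_Q\ave{\nu}_Q^{1/p}\prod_j\ave{\sigma_j}_Q^{1/s_j-1/p_j}$. A short power count gives the pointwise identity $\nu^{-1/p}=\prod_jw_j^{-1/p_j}=\prod_j\sigma_j^{1/s_j-1/p_j}$, which is the algebraic backbone of the argument; every exponent check that arises below reduces to the ``master identity'' $\sum_j(1/s_j-1/p_j)=1/s-1/p$.

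For the necessity of $\nu\in A_{p/s}$, I would apply H\"older to $\ave{\nu^{-s/(p-s)}}_Q=\ave{\prod_jw_j^{-sp/(p_j(p-s))}}_Q$ with conjugate exponents $\rho_j=s_jp_j(p-s)/(sp(p_j-s_j))$, chosen so that $-sp\rho_j/(p_j(p-s))=-s_j/(p_j-s_j)$; the check $\sum\rho_j^{-1}=1$ is exactly the master identity. Raising to the $(p-s)/s$-th power and multiplying by $\ave{\nu}_Q$ yields $[\nu]_{A_{p/s}}\le[\vec w]_{A_{\vec p/\vec s}}^p$. For the necessity of each $\sigma_j\in A_{r_j}$ with $r_j=p_js_j/(s(p_j-s_j))$, I would use a ``$j$-asymmetric'' H\"older: write $w_j^{E_j}=\nu^\alpha\prod_{k\ne j}\sigma_k^{\beta_k}$ with $E_j=s_j/((p_j-s_j)(r_j-1))$, $\alpha=E_jp_j/p$ and $\beta_k=E_jp_j(p_k-s_k)/(p_ks_k)$. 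The check $\alpha+\sum_{k\ne j}\beta_k=1$ again reduces to the master identity, and H\"older then gives $[\sigma_j]_{A_{r_j}}\le[\vec w]_{A_{\vec p/\vec s}}^{s_jp_j/(p_j-s_j)}$.

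For the converse, assuming both $\nu\in A_{p/s}$ and each $\sigma_j\in A_{r_j}$, I would rearrange the two Muckenhoupt conditions as $\ave{\nu}_Q\le[\nu]_{A_{p/s}}\ave{\nu^{-s/(p-s)}}_Q^{-(p-s)/s}$ and $\ave{\sigma_j}_Q\le[\sigma_j]_{A_{r_j}}\ave{w_j^{E_j}}_Q^{-(r_j-1)}$ (using $\sigma_j^{-1/(r_j-1)}=w_j^{E_j}$), then take appropriate powers and multiply. Using $(r_j-1)(1/s_j-1/p_j)=1/s-1/s_j+1/p_j$, this gives
\begin{equation*}
  \ave{\nu}_Q^{1/p}\prod_j\ave{\sigma_j}_Q^{1/s_j-1/p_j}\cdot D_Q\le[\nu]_{A_{p/s}}^{1/p}\prod_j[\sigma_j]_{A_{r_j}}^{1/s_j-1/p_j},
\end{equation*}
where $D_Q:=\ave{\nu^{-s/(p-s)}}_Q^{(p-s)/(sp)}\prod_j\ave{w_j^{E_j}}_Q^{1/s-1/s_j+1/p_j}$. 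To close the argument it suffices to show $D_Q\ge 1$, and this is precisely where H\"older strikes ``in reverse'': normalising the exponents in $D_Q$ by dividing through by their sum $m/s$ yields coefficients $c_0,c_1,\dots,c_m$ summing to $1$ (master identity), and the associated pointwise product $\nu^{-sc_0/(p-s)}\prod_jw_j^{E_jc_j}$ is identically $1$, because each exponent of $w_j$ simplifies to $s/(p_jm)-s/(p_jm)=0$. Standard H\"older then gives $1=\ave{1}_Q\le\ave{\nu^{-s/(p-s)}}_Q^{c_0}\prod_j\ave{w_j^{E_j}}_Q^{c_j}$, and raising to the $m/s$-th power produces $D_Q\ge 1$, completing the proof.

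The edge case $p_j=s_j$ is handled separately. Here $\sigma_j$ is undefined; by the convention in the statement, the $j$-th factor of $[\vec w]_{A_{\vec p/\vec s}}$ collapses to $(\inf_Q w_j)^{-1/p_j}$, while the claimed condition $w_j^{s/p_j}\in A_1$ reads $\ave{w_j^{s/p_j}}_Q\lesssim\inf_Q w_j^{s/p_j}$. Interpreting $A_{r_j}$ at its formal $r_j=1$, $r_j'=\infty$ limit---replacing the relevant averages by essential infima at the corresponding slots of the H\"older chains above---allows the entire argument to proceed uniformly. The only obstacle anywhere is careful algebraic bookkeeping, and every such verification reduces to the master identity.
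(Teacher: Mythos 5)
Your argument is correct in its core, and I verified the computations: the pointwise identity $\nu^{-1/p}=\prod_j\sigma_j^{1/s_j-1/p_j}$, the H\"older exponents $\rho_j$ (whose reciprocals sum to $1$ by the master identity), the $j$-asymmetric splitting $w_j^{E_j}=\nu^{\alpha}\prod_{k\neq j}\sigma_k^{\beta_k}$ with $\alpha+\sum_{k\neq j}\beta_k=1$, the identity $(r_j-1)(1/s_j-1/p_j)=1/s-1/s_j+1/p_j$, and the normalization of $D_Q$ by $m/s$ with exponents $E_jc_j=s/(mp_j)$ cancelling against $\nu^{-sc_0/(p-s)}=\prod_jw_j^{-s/(mp_j)}$, so that $D_Q\ge1$ and sufficiency follows; the stated bounds $[\nu]_{A_{p/s}}\le[\vec w]_{A_{\vec p/\vec s}}^p$ and $[\sigma_j]_{A_{r_j}}\le[\vec w]_{A_{\vec p/\vec s}}^{s_jp_j/(p_j-s_j)}$ check out. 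Note, however, that the paper itself does not prove this statement: it is quoted from Jiao (Theorem 2.1), with the case $\vec s=(1,\dots,1)$ attributed to Lerner et al.\ (Theorem 3.6), so there is no internal proof to compare against; your write-up is a self-contained derivation along what is essentially the standard route in those references (H\"older for necessity, and the ``reverse'' step $D_Q\ge1$ for sufficiency). The only thin spot is the final paragraph on $p_j=s_j$: saying the argument ``proceeds uniformly'' with ess-infima substituted is a sketch rather than a proof. It does work, but one should spell out that (i) for an edge index $j$ the necessity of $w_j^{s/p_j}\in A_1$ requires the average $\ave{w_j^{s/p_j}}_Q$, which never appears in $[\vec w]_{A_{\vec p/\vec s}}$, so you must run the limiting form of your $j$-asymmetric H\"older, $w_j^{s/p_j}=\nu^{s/p}\prod_{k\neq j}\sigma_k^{s(1/s_k-1/p_k)}$ (with $L^\infty$/ess-inf bounds in the slots of other edge indices), and then multiply by $(\inf_Qw_j)^{-s/p_j}$ to compare with $[\vec w]^s$; and (ii) when \emph{all} $p_j=s_j$, so $p=s$ and the $\nu$-slot has weight $c_0=0$, the sufficiency needs the additional (H\"older/AM--GM) observation $\esssup$-free inequality $\operatorname{ess\,inf}_Q\prod_jw_j^{s/p_j}\le\prod_j\ave{w_j^{s/p_j}}_Q$ together with $\nu\in A_1$. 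With those details written out, the proof is complete.
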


\begin{remark}\label{rmk:main}
The important special case $s_1=\cdots=s_m=1$ of Theorem \ref{thm:Jiao} was already proved in \cite[Theorem 3.6]{KLOPTG}.
\end{remark}

\begin{theorem}[\cite{CWX}, Theorem 3.5, \cite{Moen}, Theorem 3.4]\label{thm:Moen} 
Let $\vec{w}=(w_1,\dots,w_m)$, $1\leq p_1,\dots,p_m<\infty$ with $\frac{1}{p}=\sum_{j=1}^m\frac{1}{p_j}$ and $p^*$ be a number $1/m\leq p\leq p^*<\infty$. Then $\vec{w}\in A_{\vec{p},p^*}(\R^{md})$ if and only if
\begin{equation*}
\begin{cases}
  w_j^{-p'_j}\in A_{mp'_j}(\R^d), & j=1,\dots,m,\\
  \nu_{\vec{w}}^{p^*}\in A_{mp^*}(\R^d),
\end{cases}
\end{equation*}
where the condition $w_j^{-p'_j}\in A_{mp'_j}(\R^d)$ in the case $p_j=1$ is understood as $w_j^{1/m}\in A_1(\R^d)$.
\end{theorem}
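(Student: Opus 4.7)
The plan is to establish the equivalence via Hölder's inequality with carefully chosen exponents, combined with the duality $u\in A_q(\R^d)\iff u^{1-q'}\in A_{q'}(\R^d)$ from Proposition \ref{prop:weights}. Throughout I will abbreviate $\sigma_j:=w_j^{-p'_j}$, $v:=\nu_{\vec{w}}^{p^*}$, $\beta:=p^*/(mp^*-1)$ and $\gamma_j:=p'_j/(mp'_j-1)$, making use of the identities $\beta(mp^*-1)=p^*$ and $\gamma_j(mp'_j-1)=p'_j$ that drive all the exponent bookkeeping.

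For the forward implication, starting from $\vec{w}\in A_{\vec{p},p^*}(\R^{md})$, the key step is Hölder's inequality on the probability space $(Q,|Q|^{-1}\ud x)$ with exponents $b_j:=p'_j/\beta$. The reciprocal-sum check $\sum_j 1/b_j=\beta(m-1/p)\leq 1$ holds exactly because $p^*\geq p$, so
\begin{equation*}
  \ave{v^{1-(mp^*)'}}_Q=\Bigl\langle\prod_{j=1}^m w_j^{-\beta}\Bigr\rangle_Q\leq\prod_{j=1}^m\ave{w_j^{-p'_j}}_Q^{\beta/p'_j}.
\end{equation*}
Raising to the $(mp^*-1)$-th power and using $\beta(mp^*-1)=p^*$ telescopes to $\ave{v}_Q\ave{v^{1-(mp^*)'}}_Q^{mp^*-1}\leq[w]_{A_{\vec{p},p^*}}^{p^*}$, i.e.\ $v\in A_{mp^*}(\R^d)$. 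An analogous estimate applied to the factorization $w_j^{\gamma_j}=\nu_{\vec{w}}^{\gamma_j}\prod_{k\neq j}w_k^{-\gamma_j}$, with exponents $c=p^*/\gamma_j$ and $d_k=p'_k/\gamma_j$ whose reciprocals sum to $1+\gamma_j(1/p^*-1/p)\leq 1$, produces $\ave{\sigma_j}_Q\ave{w_j^{\gamma_j}}_Q^{mp'_j-1}\leq[w]_{A_{\vec{p},p^*}}^{p'_j}$, which is $\sigma_j\in A_{mp'_j}(\R^d)$ via Proposition \ref{prop:weights}.

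For the reverse implication, assuming $v\in A_{mp^*}(\R^d)$ and $\sigma_j\in A_{mp'_j}(\R^d)$ for each $j$, the plan is to use the $A_{mp^*}$-inequality for $v$ to replace $\ave{v}_Q^{1/p^*}$ by $[v]_{A_{mp^*}}^{1/p^*}\ave{\nu_{\vec{w}}^{-\beta}}_Q^{-1/\beta}$, and then bound the combined expression $\ave{\nu_{\vec{w}}^{-\beta}}_Q^{-1/\beta}\prod_j\ave{\sigma_j}_Q^{1/p'_j}$ by pairing each factor $\ave{\sigma_j}_Q$ against the corresponding $w_j^{-\beta}$ piece of $\ave{\nu_{\vec{w}}^{-\beta}}_Q=\langle\prod_k w_k^{-\beta}\rangle_Q$, through the dual condition $w_j^{\gamma_j}\in A_{(mp'_j)'}$. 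The endpoint $p_j=1$ is handled separately: the factor $\ave{\sigma_j}_Q^{1/p'_j}$ is interpreted as $(\inf_Q w_j)^{-1}$ and $A_{mp'_j}$ is replaced by $w_j^{1/m}\in A_1(\R^d)$; the Hölder computations then pass continuously to the limit $p_j\downarrow 1$, with $\gamma_j\to 1/m$.

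The hard part will be precisely this reverse step: unlike in the forward direction, Hölder no longer offers a cost-free splitting, because we need a \emph{lower} bound of the form $\ave{\nu_{\vec{w}}^{-\beta}}_Q\gtrsim\prod_j\ave{w_j^{-p'_j}}_Q^{\beta/p'_j}$, the reverse of the Hölder inequality used in the forward direction, which is not automatic. Supplying it requires the self-improvement (reverse Hölder) property of the individual Muckenhoupt weights to absorb the slack created by the strict inequality $\sum_j 1/b_j<1$ that arises whenever $p^*>p$. Once this compensation is in place, the algebraic identities $\beta(mp^*-1)=p^*$, $\gamma_j(mp'_j-1)=p'_j$ and $\sum_j 1/p_j=1/p$ close the estimate to a quantitative bound of the form $[w]_{A_{\vec{p},p^*}}\lesssim[v]_{A_{mp^*}}^{1/p^*}\prod_j[\sigma_j]_{A_{mp'_j}}^{1/p'_j}$.
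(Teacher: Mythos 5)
The paper does not give a proof of Theorem \ref{thm:Moen}: it is quoted as a known result from \cite{CWX} and \cite{Moen}, so there is no internal proof to compare against, and your argument can only be assessed on its own. The forward implication you give is correct and complete: Hölder with exponents $b_j=p'_j/\beta$ on the normalized measure $|Q|^{-1}\ud x$ on $Q$ (the exponent check $\sum_j\beta/p'_j=\beta(m-1/p)\le 1$ is precisely $p^*\ge p$), followed by the telescoping identities $\beta(mp^*-1)=p^*$ and $\gamma_j(mp'_j-1)=p'_j$, does yield $\nu_{\vec w}^{p^*}\in A_{mp^*}$ and each $w_j^{-p'_j}\in A_{mp'_j}$ with the stated quantitative bounds.

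The reverse implication, however, has a genuine gap. You correctly reduce it to a lower bound of the form $\ave{\nu_{\vec w}^{-\beta}}_Q\gtrsim\prod_j\ave{\sigma_j}_Q^{\beta/p'_j}$ and recognize that this is a \emph{reverse} of the forward Hölder step and hence not free, but you never actually prove it: the argument stops at ``supplying it requires the self-improvement (reverse Hölder) property.'' That tool — the Coifman--Fefferman inequality \eqref{eq:RHI} — is not the natural one here, and you do not indicate how it would be used. The standard ingredient is instead the reverse Jensen characterization of $A_\infty$: if $u\in A_\infty$ then $\ave{u}_Q\lesssim\exp\ave{\log u}_Q$ with a constant depending only on $[u]_{A_\infty}$. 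Applying this to each $\sigma_j\in A_{mp'_j}\subset A_\infty$ with exponents $\theta_j=\beta/p'_j$, and then using $\exp\ave{\log\prod_j\sigma_j^{\theta_j}}_Q\le\ave{\prod_j\sigma_j^{\theta_j}}_Q$ (Jensen for the concave logarithm), gives $\prod_j\ave{\sigma_j}_Q^{\theta_j}\lesssim\ave{\prod_j\sigma_j^{\theta_j}}_Q=\ave{\nu_{\vec w}^{-\beta}}_Q$; since this holds for arbitrary nonnegative $\theta_j$, the ``slack'' you worry about when $p^*>p$ is in fact a non-issue. Your alternative ``pairing'' scheme through the dual conditions $w_j^{\gamma_j}\in A_{(mp'_j)'}$ is too loosely described to check (Hölder only gives upper bounds on $\ave{\prod_k w_k^{-\beta}}_Q$, not the lower bound you need), and the $p_j=1$ endpoint is merely asserted to follow by a limiting argument. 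Until the reverse-Jensen step (or a correct substitute) and the endpoint case are written out, the reverse implication remains unproved.
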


\begin{theorem}[\cite{BL}, Theorem 5.5.3]\label{thm:SW}
If $q_1,q_2\in[1,\infty)$ and $w_1,w_2$ are two weights, then for all $\theta\in(0,1)$ we have
\begin{equation*}
  [L^{q_1}(w_1),L^{q_2}(w_2)]_\theta=L^q(w),
\end{equation*}
where
\begin{equation}\label{eq:convexity}
  \frac{1}{q}=\frac{1-\theta}{q_1}+\frac{\theta}{q_2},\qquad
  w^{\frac{1}{q}}=w_1^{\frac{1-\theta}{q_1}}w_2^{\frac{\theta}{q_2}}.
\end{equation}
\end{theorem}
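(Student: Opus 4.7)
The plan is to deduce Proposition~\ref{prop:main} from Theorem~\ref{thm:SW} (Stein--Weiss) by parametrizing the candidate triple $(X_1,X_2,X)$ through a single small parameter $\gamma\in(0,1)$, and then verifying membership in $\Theta$ using the characterizations of the multilinear weight classes (Theorems~\ref{thm:Jiao} and~\ref{thm:Moen}) together with the standard openness of the scalar Muckenhoupt conditions. I describe case~\ref{(1a)} in detail; the remaining cases~\ref{(1b)},~\ref{(2c)},~\ref{(2d)} go through by the same recipe, with Proposition~\ref{prop:weights} used to rewrite $A_{p,q}$ classes in terms of $A_p$ classes as needed.

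In case~\ref{(1a)}, given $Y_j=L^{q_j}(v_j)$ and $Z_j=L^{r_j}(w_j)$, the Stein--Weiss formula~\eqref{eq:convexity} forces any $X_j=L^{p_j}(u_j)$ satisfying $[X_j,Y_j]_\gamma=Z_j$ to be given by
\begin{equation*}
\frac{1}{p_j}=\frac{1}{1-\gamma}\Big(\frac{1}{r_j}-\frac{\gamma}{q_j}\Big),\qquad u_j^{(1-\gamma)/p_j}=w_j^{1/r_j}\,v_j^{-\gamma/q_j},
\end{equation*}
and I would simply \emph{define} $X_j$ by these formulas. A short calculation based on $\nu_{\vec w,\vec r}^{p/r}=\prod_{j} w_j^{p/r_j}$ and the analogous identity for $\vec v$ then shows that the product exponent $1/p=1/p_1+1/p_2$ and product weight $\nu_{\vec u,\vec p}$ automatically satisfy $1/r=(1-\gamma)/p+\gamma/q$ and $\nu_{\vec w,\vec r}^{1/r}=\nu_{\vec u,\vec p}^{(1-\gamma)/p}\,\nu_{\vec v,\vec q}^{\gamma/q}$, so $[X,Y]_\gamma=Z$ follows from a second application of Theorem~\ref{thm:SW}.

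It remains to check that $(X_1,X_2,X)\in\Theta$: that $p_j\in(s_j,\infty)$, that $1/p<1$, and that $\vec u\in A_{\vec p/\vec s}(\R^{2d})$. The exponent conditions hold by continuity as $\gamma\to 0^+$, since $p_j\to r_j\in(s_j,\infty)$ and $1/p\to 1/r<1$ by assumption on $(Z_1,Z_2,Z)$. For the multilinear weight condition I would invoke Theorem~\ref{thm:Jiao} to rewrite it as the scalar assertions $u_j^{1-(p_j/s_j)'}\in A_{p_j s_j/[s(p_j-s_j)]}(\R^d)$ (for each $j$) and $\nu_{\vec u,\vec p}\in A_{p/s}(\R^d)$. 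Each of these scalar weights, when expressed via the defining formulas for $u_j$, takes the form $W^{\alpha(\gamma)}V^{\beta(\gamma)}$, where $W$ is the analogous weight built from $\vec w$ (and satisfies the matching $A_{p_0}$-condition by applying Theorem~\ref{thm:Jiao} to $\vec w\in A_{\vec r/\vec s}$), $V$ is built from $\vec v$ (and lies in some $A_\infty$ class by $\vec v\in A_{\vec q/\vec s}$), and $\alpha(\gamma)\to 1$, $\beta(\gamma)\to 0$ as $\gamma\to 0^+$.

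The main obstacle is therefore the following openness lemma for Muckenhoupt weights: if $W\in A_{p_0}(\R^d)$ and $V\in A_\infty(\R^d)$, then $W^{\alpha}V^{\beta}\in A_p(\R^d)$ whenever $|\alpha-1|$, $|\beta|$ and $|p-p_0|$ are sufficiently small (depending on $[W]_{A_{p_0}}$ and $[V]_{A_\infty}$). This follows by combining the self-improvement $A_{p_0}\subset\bigcap_{\delta>0}A_{p_0-\delta}$ with the reverse H\"older inequality for $A_\infty$ weights to absorb the small $V^{\beta}$ factor. Once this lemma is granted, a routine exponent-matching between the formulas defining $u_j$ and those arising in Theorem~\ref{thm:Jiao} shows that all the Muckenhoupt conditions on $\vec u$ hold for $\gamma>0$ small enough, completing case~\ref{(1a)}. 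Cases~\ref{(2c)} and~\ref{(2d)} are analogous, with Theorem~\ref{thm:Moen} replacing Theorem~\ref{thm:Jiao}, while case~\ref{(1b)} is simpler since no product-weight condition appears.
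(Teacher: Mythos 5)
Your proposal does not actually address the statement it was supposed to prove. The statement in question is Theorem \ref{thm:SW} itself, i.e.\ the identification of the complex interpolation space $[L^{q_1}(w_1),L^{q_2}(w_2)]_\theta$ with $L^q(w)$ for the exponent and weight given by \eqref{eq:convexity}; in the paper this is not proved but quoted from Bergh--L\"ofstr\"om (Theorem 5.5.3, the Stein--Weiss/Calder\'on interpolation theorem for weighted $L^p$ spaces). Your text instead takes Theorem \ref{thm:SW} for granted (``a second application of Theorem \ref{thm:SW}'') and sketches a derivation of Proposition \ref{prop:main} from it. As a proof of the stated theorem this is circular: nowhere do you argue why the Calder\'on interpolation functor applied to the couple $(L^{q_1}(w_1),L^{q_2}(w_2))$ yields exactly $L^q(w)$ with equal norms, which is the actual content of the statement. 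A genuine proof would have to work with the definition of $[\,\cdot\,,\cdot\,]_\theta$ (or with the abstract Calder\'on product description of interpolation of Banach function spaces recalled in Section \ref{section 3}), e.g.\ by exhibiting, for $f\in L^q(w)$, an analytic family $F(z)=|f|^{q(\frac{1-z}{q_1}+\frac{z}{q_2})}\,w^{\frac{1-z}{q_1}+\frac{z}{q_2}}w_1^{-\frac{1-z}{q_1}}w_2^{-\frac{z}{q_2}}\,\operatorname{sgn} f$ and verifying the boundary estimates, together with the converse inclusion via the three-lines lemma or via the factorization description $|f|=|f_1|^{1-\theta}|f_2|^{\theta}$.

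What you did write is, in substance, a sketch of the paper's Proposition \ref{prop:main}, and as such it runs roughly parallel to Section \ref{section 5}: the forced formulas for $p_j(\gamma)$ and $u_j(\gamma)$, the continuity argument for the exponent constraints, and the reduction to scalar Muckenhoupt conditions via Theorems \ref{thm:Jiao} and \ref{thm:Moen} all match Lemmas \ref{lem:main1} and \ref{lem:main3}. The one place where you diverge is that you outsource the heart of the matter to an unproved ``openness lemma'' ($W\in A_{p_0}$, $V\in A_\infty$ imply $W^\alpha V^\beta\in A_p$ for $\alpha$ near $1$, $\beta$ near $0$, $p$ near $p_0$), whereas the paper proves exactly this kind of perturbation directly, by H\"older's inequality with carefully matched exponents $\eps(\theta),\delta(\theta)$ followed by the Coifman--Fefferman reverse H\"older inequality applied to the four weights supplied by Proposition \ref{prop:weights}. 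That lemma is plausible and provable along the same lines, but as stated it is an assertion, not an argument. In any case, none of this bears on Theorem \ref{thm:SW}, which remains unproved in your submission.
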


In order to present applications of Theorem \ref{thm:main} which deal with compact commutators, let us introduce relevant notation and some definitions. We will denote by $b$ a pointwise multiplier that belongs to the space
\begin{equation*}
  \BMO(\R^d):=\Big\{f:\R^d\to\C\ \Big|\ \Norm{f}{\BMO}:=\sup_Q\ave{\abs{f-\ave{f}_Q}}_Q<\infty\Big\}
\end{equation*}
 of functions of bounded mean oscillation, or its subspace
\begin{equation*}
  \CMO(\R^d):=\overline{C_c^\infty(\R^d)}^{\BMO(\R^d)},
\end{equation*}
where the closure is in the $\BMO$ norm and $C_c^{\infty}(\R^d)$ is the collection of $C^{\infty}(\R^d)$ functions with compact support.

Let $T$ denote a bilinear operator from $X_1\times X_2$ into $Y$, where $X_1,X_2$ and $Y$ are some function spaces. For $(f_1,f_2) \in X_1 \times  X_2$ and for a measurable vector $\vec{b}=(b_1,b_2)$, 
we define, whenever it makes sense, the commutators 
\begin{equation*}
\begin{split}
  [T,\vec{b}]_{e_1}(f_1,f_2)=[T,\vec{b}]_{(1,0)}(f_1,f_2) &=b_1 T(f_1,f_2)-T(b_1f_1,f_2) \\
  [T,\vec{b}]_{e_2}(f_1,f_2)=[T,\vec{b}]_{(0,1)}(f_1,f_2) &=b_2 T(f_1,f_2)-T(f_1,b_2f_2) \\
  [T,\vec{b}]_{(1,1)}(f_1,f_2) &=[[T,\vec{b}]_{e_1},\vec{b}]_{e_2}(f_1,f_2).
\end{split}
\end{equation*}
In the same way, we could define $[T,\vec{b}]_\alpha$ for any $\alpha\in\N^2$, but we will only consider the above three cases.


We also quote the following result which we need for our applications in Section \ref{section 8}:

\begin{theorem}[\cite{LMO}, Theorem 2.22]\label{thm: Ap/s bdd.}
Let $T$ be a bilinear operator and let $\vec{s}=(s_1,s_2)\in[1,\infty)^2$ with $\frac{1}{s}=\frac{1}{s_1}+\frac{1}{s_2}$. Assume that there exists $\vec{p}=(p_1,p_2)\in(s_1,\infty)\times(s_2,\infty)$, such that for all $\vec{w}=(w_1,w_2)\in A_{\vec p/\vec s}(\R^{2d})$, we have
\begin{equation*}
  \|T(f_1,f_2)\|_{L^p(\nu_{\vec{w},\vec{p}})} \lesssim \prod_{i=1}^2 \|f_i\|_{L^{p_i}(w_i)}, 
\end{equation*}
where $\frac{1}{p}=\frac{1}{p_1}+\frac{1}{p_2}$ and $\nu_{\vec{w},\vec{p}}=\prod_{i=1}^2 w_i^{p/p_i}$. Then, for all exponents $\vec{q}=(q_1,q_2)\in(s_1,\infty)\times(s_2,\infty)$, for all weights $\vec{v}=(v_1,v_2)\in A_{\vec q/\vec s}(\R^{2d})$, for all $\vec{b}=(b_1,b_2)\in\BMO(\R^d)^2$, and for each multi-index $\alpha$, we have
\begin{equation*}
  \|[T,\vec{b}]_{\alpha}(f_1,f_2)\|_{L^q(\nu_{\vec{v},\vec{q}})} \lesssim \prod_{i=1}^2\|b_i\|_{\BMO}^{\alpha_i} \|f_i\|_{L^{q_i}(v_i)},
\end{equation*}
where $\frac{1}{q}=\frac{1}{q_1}+\frac{1}{q_2}$ and $\nu_{\vec{v},\vec{q}}=\prod_{i=1}^2 v_i^{q/q_i}$.
\end{theorem}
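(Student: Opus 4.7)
The plan is to combine an extrapolation step with the Coifman--Rochberg holomorphic conjugation trick, adapted to the bilinear $A_{\vec p/\vec s}$ setting. As a preliminary step, I would invoke the $A_{\vec p/\vec s}$ extension of the Li--Martell--Ombrosi extrapolation theorem (the analogue of Theorem~\ref{thm:extrapol. bdd.} in the $\vec s$-averaged scale, also contained in \cite{LMO}) to upgrade the single-exponent hypothesis to
\begin{equation*}
  \|T(f_1,f_2)\|_{L^q(\nu_{\vec v,\vec q})}\lesssim\|f_1\|_{L^{q_1}(v_1)}\|f_2\|_{L^{q_2}(v_2)}
\end{equation*}
for every $\vec q\in(s_1,\infty)\times(s_2,\infty)$ and every $\vec v\in A_{\vec q/\vec s}(\R^{2d})$, with an implicit constant depending on $\vec v$ only through its $A_{\vec q/\vec s}$ characteristic. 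This quantitative control is what gets fed into the conjugation argument below.

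Next, for a fixed target tuple $(\vec q,\vec v,\vec b)$ I would introduce the holomorphic family
\begin{equation*}
  F(z_1,z_2):=e^{z_1 b_1+z_2 b_2}\,T\bigl(e^{-z_1 b_1}f_1,\,e^{-z_2 b_2}f_2\bigr),\qquad (z_1,z_2)\in\C^2.
\end{equation*}
A direct differentiation shows $\partial_{z_1}^{\alpha_1}\partial_{z_2}^{\alpha_2}F(0,0)=[T,\vec b]_\alpha(f_1,f_2)$ for each $\alpha\in\{e_1,e_2,(1,1)\}$. Absorbing the outer exponential into the measure yields
\begin{equation*}
  \|F(z_1,z_2)\|_{L^q(\nu_{\vec v,\vec q})}=\|T(g_1,g_2)\|_{L^q(\nu_{\vec v(z),\vec q})},\qquad g_j:=e^{-z_j b_j}f_j,\;\;v_j(z_j):=v_j\,e^{q_j\Re(z_j b_j)},
\end{equation*}
with the convenient identity $\|g_j\|_{L^{q_j}(v_j(z_j))}=\|f_j\|_{L^{q_j}(v_j)}$ for each $j$.

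The crux is then to verify that there exists $r=r(\vec v,\vec q,\vec s)>0$, depending on $\vec v$ only through $[\vec v]_{A_{\vec q/\vec s}}$, such that $\vec v(z)\in A_{\vec q/\vec s}(\R^{2d})$ with uniformly controlled characteristic whenever $|z_j|\,\|b_j\|_{\BMO}\le r$. By Theorem~\ref{thm:Jiao} this decouples into the stability of the finitely many one-weight conditions $v_j^{1-(q_j/s_j)'}\in A_{q_j s_j/(s(q_j-s_j))}(\R^d)$ and $\nu_{\vec v,\vec q}\in A_{q/s}(\R^d)$ under multiplication by $e^{\lambda b_j}$ (respectively $e^{\lambda_1 b_1+\lambda_2 b_2}$) for $|\lambda_j|\,\|b_j\|_{\BMO}$ small. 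Each of these is the classical Coifman--Rochberg fact that the $A_p$ classes are stable under such exponential BMO perturbations, a quantitative consequence of the John--Nirenberg inequality. Combining this stability with the extrapolated bound of the preliminary step applied to the new weight $\vec v(z)$ gives $\|F(z_1,z_2)\|_{L^q(\nu_{\vec v,\vec q})}\lesssim\|f_1\|_{L^{q_1}(v_1)}\|f_2\|_{L^{q_2}(v_2)}$ uniformly on the polydisc $|z_j|\le r/\|b_j\|_{\BMO}$.

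To conclude, I would apply Cauchy's integral formula on the polytorus of radii $r_j:=r/\|b_j\|_{\BMO}$,
\begin{equation*}
  [T,\vec b]_\alpha(f_1,f_2)=\frac{\alpha_1!\,\alpha_2!}{(2\pi)^2}\int_0^{2\pi}\!\!\int_0^{2\pi}\frac{F(r_1 e^{i\theta_1},r_2 e^{i\theta_2})}{r_1^{\alpha_1}r_2^{\alpha_2}\,e^{i(\alpha_1\theta_1+\alpha_2\theta_2)}}\,d\theta_1\,d\theta_2,
\end{equation*}
take $L^q(\nu_{\vec v,\vec q})$ norms, and push them inside via Minkowski's integral inequality; the uniform bound contributes a constant and the factors $r_j^{-\alpha_j}$ deliver exactly the advertised $\|b_j\|_{\BMO}^{\alpha_j}$ powers. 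The main obstacle is the stability step: while single-variable $A_p$ stability under BMO exponential perturbations is classical, one must keep careful track of how the perturbation parameters scale with the $\vec s$-indices appearing in the reduction via Theorem~\ref{thm:Jiao}, and ensure that the resulting window of validity $r$ can be chosen to depend on $\vec v$ solely through $[\vec v]_{A_{\vec q/\vec s}}$, so that the composition with the extrapolation step yields a genuinely quantitative estimate.
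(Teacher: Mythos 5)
The paper does not actually prove this statement: it is quoted verbatim as \cite{LMO}, Theorem 2.22, so there is no internal proof to compare against. Your proposal essentially reconstructs the argument behind that cited result — quantitative extrapolation in the $A_{\vec q/\vec s}$ scale combined with the classical conjugation/Cauchy-integral trick (\`a la Coifman--Rochberg and \'Alvarez--Bagby--Kurtz--P\'erez), with stability of the perturbed weights $v_j e^{q_j\Re(z_j b_j)}$ reduced via the characterization of Theorem \ref{thm:Jiao} to one-weight $A_p$ stability under exponential $\BMO$ perturbations — and it is correct modulo the standard technicalities you already flag, namely a quantitative form of Theorem \ref{thm:Jiao} so that $[\vec v(z)]_{A_{\vec q/\vec s}}$ is controlled by $[\vec v]_{A_{\vec q/\vec s}}$, and checking vector-valued analyticity of $F$ on a suitable dense class of $f_j$ (or with truncated $b_j$).
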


\section{The proof of the Key Proposition \ref{prop:main}}\label{section 5}

In this section we prove Proposition \ref{prop:main}. The first step is 
to connect Theorem \ref{thm:SW} with the multilinear
$A_{\vec{p}/\vec{s}}(\R^{md})$,
$A_{\vec{p}}(\R^{md})$, and $A_{\vec{p},p^*}(\R^{md})$ conditions as follows:

\begin{lemma}\label{lem:main1}
Let
\begin{equation*}
  \vec{q}=(q_1,\dots,q_m),\quad \vec{r}=(r_1,\dots,r_m),\quad \vec{s}=(s_1,\dots,s_m)
\end{equation*}
where $s_j\in[1,\infty)$, $q_j,r_j\in(s_j,\infty)$ and
\begin{equation*}
 \frac{1}{q}=\sum_{j=1}^m\frac{1}{q_j}<1,\quad \frac{1}{r}=\sum_{j=1}^m\frac{1}{r_j}<1,\quad \frac{1}{s}=\sum_{j=1}^m\frac{1}{s_j}.
\end{equation*}
Let $\vec{v}=(v_1,\dots,v_m)\in A_{\vec{q}/\vec{s}}(\R^{md})$, $\vec{w}=(w_1,\dots,w_m)\in A_{\vec{r}/\vec{s}}(\R^{md})$. Then there exists $\vec{p}=(p_1,\dots,p_m)$, with $p_j\in(s_j,\infty)$ satisfying $\frac{1}{p}=\sum_{j=1}^m\frac{1}{p_j}<1$ and $\vec{u}=(u_1,\dots,u_m)\in A_{\vec{p}/\vec{s}}(\R^{md})$, $\theta\in(0,1)$ such that
\begin{equation}\label{maineq1}
  \frac{1}{r_j}=\frac{1-\theta}{p_j}+\frac{\theta}{q_j},\qquad w_j^{\frac{1}{r_j}}=u_j^{\frac{1-\theta}{p_j}}v_j^{\frac{\theta}{q_j}},\qquad j=1,\dots,m,
\end{equation}
and 
\begin{equation}\label{maineq2}
  \frac{1}{r}=\frac{1-\theta}{p}+\frac{\theta}{q},\qquad \nu_{\vec{w},\vec{r}}^\frac{1}{r}=\nu_{\vec{u},\vec{p}}^{\frac{1-\theta}{p}}\nu_{\vec{v},\vec{q}}^\frac{\theta}{q}.
\end{equation}
\end{lemma}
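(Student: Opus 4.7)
The plan is to mimic Stein--Weiss (Theorem~\ref{thm:SW}) by picking a small parameter $\theta\in(0,1)$ and then letting \eqref{maineq1} successively define $p_j$ and $u_j$. Solving the exponent identity yields
\begin{equation*}
  p_j=\frac{(1-\theta)\,q_jr_j}{q_j-\theta r_j},
\end{equation*}
which is well-defined for $\theta<\min_j q_j/r_j$ and tends to $r_j>s_j$ as $\theta\to 0^+$; hence $p_j\in(s_j,\infty)$ for every $j$ once $\theta$ is small enough. Summing the $1/r_j$ identities delivers the first half of \eqref{maineq2}, and $1/p<1$ follows by continuity from $1/r<1$. Setting
\begin{equation*}
  u_j:=w_j^{p_j/((1-\theta)r_j)}\,v_j^{-\theta p_j/((1-\theta)q_j)},
\end{equation*}
as dictated by the weight identity in \eqref{maineq1}, and multiplying across $j$ with powers $p/p_j$ gives the second half of \eqref{maineq2}.

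The main task is then to check $\vec{u}\in A_{\vec{p}/\vec{s}}(\R^{md})$. I would apply the Jiao characterization (Theorem~\ref{thm:Jiao}) to decouple this into the classical conditions
\begin{equation*}
  u_j^{1-(p_j/s_j)'}\in A_{p_js_j/(s(p_j-s_j))}(\R^d)\quad\text{and}\quad \nu_{\vec{u},\vec{p}}\in A_{p/s}(\R^d)
\end{equation*}
(with the usual $A_1$-substitution when $p_j=s_j$). For each such condition, the explicit formula for $u_j$ writes the weight in question as a pointwise product of a power of $w_j$ (or $\nu_{\vec{w},\vec{r}}$) with a power of $v_j^{-1}$ (or $\nu_{\vec{v},\vec{q}}^{-1}$). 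The negative powers of $\vec{v}$ are handled by Proposition~\ref{prop:weights} applied to the Jiao decomposition of $\vec{v}\in A_{\vec{q}/\vec{s}}$, which recasts each of them as a positive power of an auxiliary $A_{t'}$-type weight.

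With the weights $u_j^{1-(p_j/s_j)'}$ and $\nu_{\vec{u},\vec{p}}$ so expressed, a Stein--Weiss interpolation of the underlying classical $A_t$ memberships lands, by a short computation, in an $A_t$-class whose index exceeds the target ($p_js_j/(s(p_j-s_j))$, respectively $p/s$) by a factor $(1+\theta)/(1-\theta)=1+O(\theta)$. To close this gap I would invoke the self-improvement of the classical Muckenhoupt classes: quantitative reverse H\"older supplies an $\eta>0$, depending only on the $A_t$-characteristics of the starting weights (so bounded away from $0$ as $\theta\to 0$), such that any weight in the slightly larger class automatically lies in $A_{t-\eta}$. Taking $\theta$ small enough that $O(\theta)<\eta$ brings the membership down to the target exponent, and hence places $\vec{u}$ in $A_{\vec{p}/\vec{s}}(\R^{md})$.

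The main obstacle is precisely this last step: the direct Stein--Weiss interpolation overshoots the target exponent by an amount of order $\theta$, so one cannot prove the lemma by interpolation alone, and the openness of $A_t$ is genuinely needed. The construction of $(\vec p,\vec u,\theta)$ is otherwise forced by the form of \eqref{maineq1}--\eqref{maineq2}, and once the class membership has been secured for small $\theta$ both identities hold tautologically.
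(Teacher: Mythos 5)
Your setup is correct and matches the paper: solving \eqref{maineq1} for $p_j=(1-\theta)q_jr_j/(q_j-\theta r_j)$ and $u_j=w_j^{p_j/((1-\theta)r_j)}v_j^{-\theta p_j/((1-\theta)q_j)}$, observing $p_j\to r_j$ and $p\to r$ as $\theta\to 0$ so that the exponent constraints hold for small $\theta$, and reducing $\vec u\in A_{\vec p/\vec s}$ to classical $A_t$ conditions via Theorem~\ref{thm:Jiao}. The multiplicative identities in \eqref{maineq2} also follow as you say. All of this agrees with the paper's proof.

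The gap is in the only step that carries real weight: verifying that $u_j^{1-(p_j/s_j)'}$ and $\nu_{\vec u,\vec p}$ actually lie in the asserted $A_t$ classes. You claim that a ``Stein--Weiss interpolation of the underlying classical $A_t$ memberships lands, by a short computation, in an $A_t$-class whose index exceeds the target by a factor $(1+\theta)/(1-\theta)$,'' and then invoke openness of $A_t$. But the Stein--Weiss interpolation of $A_p$ classes only applies in the precise factorized form $W_0^{(1-\lambda)t/t_0}W_1^{\lambda t/t_1}\in A_t$ with $\frac1t=\frac{1-\lambda}{t_0}+\frac{\lambda}{t_1}$, and the exponents on the $w_j$- and $v_j$-derived pieces of $u_j^{1-(p_j/s_j)'}$ do not satisfy this relation (for instance the exponent on $w_j^{1-\tilde r_j'}$ works out to $\tfrac{p_j(r_j-s_j)}{(1-\theta)r_j(p_j-s_j)}\neq 1$). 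No theorem delivers the ``$(1+\theta)/(1-\theta)$ overshoot'' you assert, and the computation supporting it is never given. The paper handles exactly this point by expanding the $A_t$-characteristic of $u_j^{1-\tilde p_j'}$ (resp.\ $\nu_{\vec u,\vec p}$) as a product of two averages, applying H\"older \emph{twice} with two free parameters $\eps(\theta),\delta(\theta)$ that are then calibrated so the resulting four exponents $\tilde\varrho_j,\tilde\sigma_j,\tilde\tau_j,\tilde\phi_j$ pair up, and only afterwards using reverse H\"older for exponents tending to $1$. That two-parameter H\"older step is the substance of the lemma, and your sketch replaces it with an appeal to a theorem that does not exist in the needed form. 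Separately, the openness/self-improvement step you invoke produces an $\eta$ that depends on the $A_t$-characteristic of the interpolated weight itself, not of $w_j,v_j$; closing the argument requires the unsupplied ``short computation'' to bound that characteristic uniformly as $\theta\to 0$, which you do not address.
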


\begin{proof}
By Theorem \ref{thm:Jiao} we prove the lemma in its equivalent form: if $$v_j^{1-\big(\frac{q_j}{s_j}\big)^{'}}\in A_{\frac{s_j}{s}\big(\frac{q_j}{s_j}\big)^{'}}(\R^d), \quad \nu_{\vec{v},\vec{q}}\in A_{\frac{q}{s}}(\R^d)$$ and $$w_j^{1-\big(\frac{r_j}{s_j}\big)^{'}}\in A_{\frac{s_j}{s}\big(\frac{r_j}{s_j}\big)^{'}}(\R^d),\quad \nu_{\vec{w},\vec{r}}\in A_{\frac{r}{s}}(\R^d),$$ then there exists $\vec{p}=(p_1,\dots,p_m)$ with $p_j\in(s_j,\infty)$ with $\frac{1}{p}=\sum_{j=1}^m\frac{1}{p_j}<1$ and  $$u_j^{1-\big(\frac{p_j}{s_j}\big)^{'}}\in A_{\frac{s_j}{s}\big(\frac{p_j}{s_j}\big)^{'}}(\R^d),\quad \nu_{\vec{u},\vec{p}}\in A_{\frac{p}{s}}(\R^d),\quad\theta 
\in(0,1)$$ such that \eqref{maineq1} and \eqref{maineq2} hold.

Note that the choice of $\theta\in(0,1)$ determines 
\begin{equation*}
  p_j=p_j(\theta)=\frac{1-\theta}{\frac{1}{r_j}-\frac{\theta}{q_j}},\quad
  u_j=u_j(\theta)=w_j^{\frac{p_j}{r_j(1-\theta)}}v_j^{-\frac{p_j\cdot\theta}{q_j(1-\theta)}},\qquad j=1,\dots,m,
\end{equation*}
and
\begin{equation*}
  p=p(\theta)=\frac{1-\theta}{\frac{1}{r}-\frac{\theta}{q}},\quad
  \nu_{\vec{u},\vec{p}}=\nu_{\vec{u},\vec{p}}(\theta)=\nu_{\vec{w},\vec{r}}^{\frac{p}{r(1-\theta)}}\nu_{\vec{v},\vec{q}}^{-\frac{p\cdot\theta}{q(1-\theta)}},
\end{equation*}
so it remains to check that we can choose $\theta\in(0,1)$ so that $\vec{p}=(p_1,\dots,p_m)$ with $p_j\in(s_j,\infty)$ satisfying $\frac{1}{p}=\sum_{j=1}^m\frac{1}{p_j}<1$ and $u_j^{1-\big(\frac{p_j}{s_j}\big)^{'}}\in A_{\frac{s_j}{s}\big(\frac{p_j}{s_j}\big)^{'}}(\R^d)$, $\nu_{\vec{u},\vec{p}}\in A_{\frac{p}{s}}(\R^d)$. Since $p_j(0)=r_j\in(s_j,\infty)$ and $p(0)=r\in(1,\infty)$, the first conditions are obvious for small enough $\theta>0$ by continuity. To simplify writing, we denote $\tilde m=1/s$, $\tilde m_j=s_j/s$, $\tilde p_j=p_j/s_j$, $\tilde q_j=q_j/s_j$ and $\tilde r_j=r_j/s_j$ for $j=1,\dots,m$, and observe that these satisfy the same relations
\begin{equation*}
  \tilde p_j=\tilde p_j(\theta)=\frac{p_j(\theta)}{s_j}=\frac{1-\theta}{\frac{s_j}{r_j}-\frac{\theta s_j}{q_j}}=\frac{1-\theta}{\frac{1}{\tilde r_j}-\frac{\theta}{\tilde q_j}}
\end{equation*}
and $\tilde p_j(0)=\tilde r_j$ as the original exponents $p_j,r_j$ and $q_j$. 

We check that $u_j^{1-\tilde p'_j}\in A_{\tilde m_j\tilde p'_j}(\R^d)$, so we consider a cube $Q$ and write
\begin{equation*}
\begin{split}
  \ave{u_j^{1-\tilde p'_j}}_Q &\ave{u_j^{(1-\tilde p'_j)(-\frac{1}{\tilde m_j\tilde p'_j-1})}}_Q^{\tilde m_j\tilde p'_j-1} \\
  &=\ave{w_j^{-\frac{\tilde p'_j}{\tilde r_j(1-\theta)}}v_j^{\frac{\tilde p'_j\cdot\theta}{\tilde q_j(1-\theta)}}}_Q  
  \ave{w_j^{\frac{\tilde p'_j}{\tilde r_j(1-\theta)(\tilde m_j\tilde p'_j-1)}}v_j^{-\frac{\tilde p'_j\cdot\theta}{\tilde q_j(1-\theta)(\tilde m_j\tilde p'_j-1)}}}_Q^{\tilde m_j\tilde p'_j-1}.
\end{split}
\end{equation*}

In the first average, we use H\"older's inequality with exponents $1+\eps^{\pm 1}$, and in the second with exponents $1+\delta^{\pm 1}$ to get
\begin{equation}\label{eq:beforeRHI1}
\begin{split}
  &\leq \ave{w_j^{-\frac{\tilde p'_j(1+\eps)}{\tilde r_j(1-\theta)}}}_Q^{\frac{1}{1+\eps}} \ave{v_j^{\frac{\tilde p'_j\cdot\theta(1+\eps)}{\tilde q_j\eps(1-\theta)}}}_Q^{\frac{\eps}{1+\eps}} \\ &\qquad\times\ave{w_j^{\frac{\tilde p'_j(1+\delta)}{\tilde r_j(1-\theta)(\tilde m_j\tilde p'_j-1)}}}_Q^{\frac{\tilde m_j\tilde p'_j-1}{1+\delta}} \ave{v_j^{-\frac{\tilde p'_j\cdot\theta(1+\delta)}{\tilde q_j\delta(1-\theta)(\tilde m_j\tilde p'_j-1)}}}_Q^{\frac{(\tilde m_j\tilde p'_j-1)\delta}{1+\delta}}  \\
  &=\ave{(w_j^{1-\tilde r'_j})^{\tilde\varrho_j(\theta)}}_Q^{\frac{1}{1+\eps}}\ave{(v_j^{-\frac{1-\tilde q'_j}{\tilde m_j\tilde q'_j-1}})^{\tilde\sigma_j(\theta)}}_Q^{\frac{\eps}{1+\eps}}  \\
  &\qquad\times\ave{(w_j^{-\frac{1-\tilde r'_j}{\tilde m_j\tilde r'_j-1}})^{\tilde\tau_j(\theta)}}_Q^{\frac{\tilde m_j\tilde p'_j-1}{1+\delta}}\ave{(v_j^{1-\tilde q'_j})^{\tilde\phi_j(\theta)}}_Q^{\frac{(\tilde m_j\tilde p'_j-1)\delta}{1+\delta}},
\end{split}  
\end{equation}
where
\begin{equation*}
  \tilde\varrho_j(\theta):=\frac{\tilde p'_j(\theta)(1+\eps)}{\tilde r'_j(1-\theta)},\qquad
  \tilde\sigma_j(\theta):=\frac{\theta \tilde p'_j(\theta)(\tilde m_j\tilde q'_j-1)(1+\eps)}{\tilde q'_j\eps(1-\theta)},
\end{equation*}
and 
\begin{equation*}
  \tilde\tau_j(\theta):=\frac{\tilde p'_j(\theta)(\tilde m_j \tilde r'_j-1)(1+\delta)}{\tilde r'_j(1-\theta)(\tilde m_j \tilde p'_j(\theta)-1)},\qquad
  \tilde\phi_j(\theta):=\frac{\theta \tilde p'_j(\theta)(1+\delta)}{\tilde q'_j\delta(1-\theta)(\tilde m_j\tilde p'_j(\theta)-1)}.
\end{equation*}

Now, we choose $\eps=\eps(\theta)$ and $\delta=\delta(\theta)$ in such a way that
\begin{equation*}
 \tilde\varrho_j(\theta)=\tilde\sigma_j(\theta),\quad \tilde\tau_j(\theta)=\tilde\phi_j(\theta),
\end{equation*}
which has the solution
\begin{equation*}
  \eps(\theta)=\frac{\theta \tilde r'_j(\tilde m_j\tilde q'_j-1)}{\tilde q'_j},\quad
  \delta(\theta)=\frac{\theta \tilde r'_j}{\tilde q'_j(\tilde m_j\tilde r'_j-1)}.
\end{equation*}

The strategy to proceed is to use the reverse H\"older inequality for $A_v(\R^d)$ weights due to Coifman--Fefferman \cite{CF}, which says that each $W\in A_v(\R^d)$ satisfies
\begin{equation}\label{eq:RHI}
  \ave{W^t}_Q^{1/t}\lesssim \ave{W}_Q
\end{equation}
for all $t\leq 1+\eta$ and for some $\eta>0$ depending only on $[W]_{A_v}$.

Recalling that $\tilde p_j(0)=\tilde r_j$, we see that $\tilde\varrho_j(0)=\tilde\tau_j(0)=1$. By continuity, given any $\eta>0$, we find that 
\begin{equation*}
  \max(\tilde\varrho_j(\theta),\tilde\tau_j(\theta))\leq 1+\eta\,\,\,\text{for all small enough}\,\,\,\theta>0.
\end{equation*}
By Proposition \ref{prop:weights} each of the four functions 
\begin{equation*}
\begin{split}
  w_j^{1-\tilde r'_j}\in A_{\tilde m_j\tilde r'_j}(\R^d),\quad 
  & w_j^{-\frac{1-\tilde r'_j}{\tilde m_j\tilde r'_j-1}}\in A_{(\tilde m_j\tilde r'_j)'}(\R^d),\\
  & v_j^{1-\tilde q'_j}\in A_{\tilde m_j\tilde q'_j}(\R^d),\quad 
  v_j^{-\frac{1-\tilde q'_j}{\tilde m_j\tilde q'_j-1}}\in A_{(\tilde m_j\tilde q'_j)'}(\R^d)
\end{split}
\end{equation*}
satisfies the reverse H\"older inequality (\ref{eq:RHI}) for all $t\leq 1+\eta$ and for some $\eta>0$. Thus, for all small enough $\theta>0$, we have
\begin{equation*}
\begin{split}
  \eqref{eq:beforeRHI1}
  &\lesssim\ave{w_j^{1-\tilde r'_j}}_Q^{\frac{\tilde p'_j}{\tilde r'_j(1-\theta)}}\ave{v_j^{-\frac{1-\tilde q'_j}{\tilde m_j \tilde q'_j-1}}}_Q^{\frac{\theta \tilde p'_j(\tilde m_j \tilde q'_j-1)}{\tilde q'_j(1-\theta)}} \\
  &\qquad\times\ave{w_j^{-\frac{1-\tilde r'_j}{\tilde m_j\tilde r'_j-1}}}_Q^{\frac{\tilde p'_j(\tilde m_j\tilde r'_j-1)}{\tilde r'_j(1-\theta)}}\ave{v_j^{1-\tilde q'_j}}_Q^{\frac{\theta \tilde p'_j}{\tilde q'_j(1-\theta)}}  \\
  &=(\ave{w_j^{1-\tilde r'_j}}_Q\ave{w_j^{-\frac{1-\tilde r'_j}{\tilde m_j\tilde r'_j-1}}}_Q^{\tilde m_j\tilde r'_j-1})^{\frac{\tilde p'_j}{\tilde r'_j(1-\theta)}}  \\
  &\qquad\times(\ave{v_j^{1-\tilde q'_j}}_Q\ave{v_j^{-\frac{1-\tilde q'_j}{\tilde m_j\tilde q'_j-1}}}^{\tilde m_j\tilde q'_j-1}_Q)^{\frac{\theta \tilde p'_j}{\tilde q'_j(1-\theta)}}  \\
  &\leq[w_j^{1-\tilde r'_j}]_{A_{\tilde m_j\tilde r'_j}}^{\frac{\tilde q'_j}{\tilde q'_j-\theta \tilde r'_j}}[v_j^{1-\tilde q'_j}]_{A_{\tilde m_j\tilde q'_j}}^{\frac{\theta \tilde r_j(\tilde q_j-1)}{\tilde q_j-\theta \tilde r_j}}.
\end{split}
\end{equation*}
In combination with the lines preceding \eqref{eq:beforeRHI1}, we have shown that
\begin{equation*}
  [u_j^{1-\tilde p'_j}]_{A_{\tilde m_j\tilde p'_j}}\lesssim[w_j^{1-\tilde r'_j}]_{A_{\tilde m_j\tilde r'_j}}^{\frac{\tilde q'_j}{\tilde q'_j-\theta \tilde r'_j}}[v_j^{1-\tilde q'_j}]_{A_{\tilde m_j\tilde q'_j}}^{\frac{\theta \tilde r_j(\tilde q_j-1)}{\tilde q_j-\theta r_j}}<\infty,
\end{equation*}
provided that $\theta>0$ is small enough. 

Now, we check that $\nu_{\vec{u},\vec{p}}\in A_{\tilde mp}(\R^d)$, so we consider a cube $Q$ and write
\begin{equation*}
  \ave{\nu_{\vec{u},\vec{p}}}_Q\ave{\nu_{\vec{u},\vec{p}}^{-\frac{1}{\tilde mp-1}}}_Q^{\tilde mp-1}
  =\ave{\nu_{\vec{w},\vec{r}}^{\frac{p}{r(1-\theta)}}\nu_{\vec{v},\vec{q}}^{-\frac{p\cdot\theta}{q(1-\theta)}}}_Q  \ave{\nu_{\vec{w},\vec{r}}^{-\frac{p}{r(1-\theta)(\tilde mp-1)}}\nu_{\vec{v},\vec{q}}^{\frac{p\cdot\theta}{q(1-\theta)(\tilde mp-1)}}}_Q^{\tilde mp-1}.
\end{equation*}

In the first average, we use H\"older's inequality with exponents $1+\eps^{\pm 1}$, and in the second with exponents $1+\delta^{\pm 1}$ to get
\begin{equation}\label{eq:beforeRHI2}
\begin{split}
  &\leq \ave{\nu_{\vec{w},\vec{r}}^{\frac{p(1+\eps)}{r(1-\theta)}}}_Q^{\frac{1}{1+\eps}} \ave{\nu_{\vec{v},\vec{q}}^{-\frac{p\cdot\theta(1+\eps)}{q\eps(1-\theta)}}}_Q^{\frac{\eps}{1+\eps}}  
  \ave{\nu_{\vec{w},\vec{r}}^{-\frac{p(1+\delta)}{r(1-\theta)(\tilde mp-1)}}}_Q^{\frac{\tilde mp-1}{1+\delta}} \ave{\nu_{\vec{v},\vec{q}}^{\frac{p\cdot\theta(1+\delta)}{q\delta(1-\theta)(\tilde mp-1)}}}_Q^{\frac{(\tilde mp-1)\delta}{1+\delta}}  \\
  &=\ave{\nu_{\vec{w},\vec{r}}^{\tilde \varrho(\theta)}}_Q^{\frac{1}{1+\eps}}\ave{(\nu_{\vec{v},\vec{q}}^{-\frac{1}{\tilde mq-1}})^{\tilde \sigma(\theta)}}_Q^{\frac{\eps}{1+\eps}} \ave{(\nu_{\vec{w},\vec{r}}^{-\frac{1}{\tilde mr-1}})^{\tilde \tau(\theta)}}_Q^{\frac{\tilde mp-1}{1+\delta}}\ave{\nu_{\vec{v},\vec{q}}^{\tilde \phi(\theta)}}_Q^{\frac{(\tilde mp-1)\delta}{1+\delta}},
\end{split}  
\end{equation}
where
\begin{equation*}
  \tilde \varrho(\theta):=\frac{p(\theta)(1+\eps)}{r(1-\theta)},\qquad
  \tilde \sigma(\theta):=\frac{\theta p(\theta)(\tilde mq-1)(1+\eps)}{q\eps(1-\theta)},
\end{equation*}
and
\begin{equation*}
  \tilde \tau(\theta):=\frac{p(\theta)(\tilde mr-1)(1+\delta)}{r(1-\theta)(\tilde mp(\theta)-1)},\qquad
  \tilde \phi(\theta):=\frac{\theta p(\theta)(1+\delta)}{q\delta(1-\theta)(\tilde mp(\theta)-1)}.
\end{equation*}

Again, we choose $\eps=\eps(\theta)$ and $\delta=\delta(\theta)$  in such a way that
\begin{equation*}
   \tilde \varrho(\theta)=\tilde \sigma(\theta),\quad \tilde \tau(\theta)=\tilde \phi(\theta),
\end{equation*}
which gives
\begin{equation*}
  \eps(\theta)=\theta r(\tilde m-\frac{1}{q}),\quad
  \delta(\theta)=\frac{\theta r}{q(\tilde mr-1)}.
\end{equation*}
%

The strategy to proceed is the same as before. In particular, we use the reverse H\"older inequality \eqref{eq:RHI} for $A_v(\R^d)$ weights. 

Recalling that $p(0)=r$, we see that $\tilde\varrho(0)=\tilde\tau(0)=1$. By continuity, given any $\eta>0$, we find that 
\begin{equation*}
  \max(\tilde \varrho(\theta),\tilde \tau(\theta))\leq 1+\eta\,\,\,\text{for all small enough}\,\,\,\theta>0.
\end{equation*}
By Proposition \ref{prop:weights} each of the four functions
\begin{equation*}
\begin{split}
   \nu_{\vec{w},\vec{r}}\in A_{\tilde mr}(\R^d),\quad
   &\nu_{\vec{w},\vec{r}}^{-\frac{1}{\tilde mr-1}}\in A_{(\tilde mr)'}(\R^d),\\
   &\nu_{\vec{v},\vec{q}}\in A_{\tilde mq}(\R^d),\quad \nu_{\vec{v},\vec{q}}^{-\frac{1}{\tilde mq-1}}\in A_{(\tilde mq)'}(\R^d)
\end{split}
\end{equation*}
satisfies the reverse H\"older inequality (\ref{eq:RHI}) for all $t\leq 1+\eta$ and for some $\eta>0$. Thus, for all small enough $\theta>0$, we have
\begin{equation*}
\begin{split}
  \eqref{eq:beforeRHI2}
  &\lesssim\ave{\nu_{\vec{w},\vec{r}}}_Q^{\frac{p(\theta)}{r(1-\theta)}}\ave{\nu_{\vec{v},\vec{q}}^{-\frac{1}{\tilde mq-1}}}_Q^{\frac{\theta p(\theta)(\tilde mq-1)}{q(1-\theta)}}  \\
  &\qquad\times\ave{\nu_{\vec{w},\vec{r}}^{-\frac{1}{\tilde mr-1}}}_Q^{\frac{p(\theta)(\tilde mr-1)}{r(1-\theta)}}\ave{\nu_{\vec{v},\vec{q}}}_Q^{\frac{\theta p(\theta)}{q(1-\theta)}}  \\
  &=(\ave{\nu_{\vec{w},\vec{r}}}_Q\ave{\nu_{\vec{w},\vec{r}}^{-\frac{1}{\tilde mr-1}}}_Q^{\tilde mr-1})^{\frac{p(\theta)}{r(1-\theta)}}  \\
  &\qquad\times(\ave{\nu_{\vec{v},\vec{q}}}_Q\ave{\nu_{\vec{v},\vec{q}}^{-\frac{1}{\tilde mq-1}}}^{\tilde mq-1}_Q)^{\frac{\theta p(\theta)}{q(1-\theta)}}  \\
  &\leq[\nu_{\vec{w},\vec{r}}]_{A_{\tilde mr}}^{\frac{q}{q-\theta r}}[\nu_{\vec{v},\vec{q}}]_{A_{\tilde mq}}^{\frac{\theta r}{q-\theta r}}.
\end{split}
\end{equation*}
In combination with the lines preceding \eqref{eq:beforeRHI2}, we have shown that
\begin{equation*}
  [\nu_{\vec{u},\vec{p}}]_{A_{\tilde mp}}\lesssim[\nu_{\vec{w},\vec{r}}]_{A_{\tilde mr}}^{\frac{q}{q-\theta r}}[\nu_{\vec{v},\vec{q}}]_{A_{\tilde mq}}^{\frac{\theta r}{q-\theta r}}<\infty,
\end{equation*}
provided that $\theta>0$ is small enough. This concludes the proof.
\end{proof}

\begin{lemma}\label{lem:main3}
Let 
\begin{equation*}
  \alpha\ge 0,\quad \vec{q}=(q_1,\dots,q_m),\quad \vec{r}=(r_1,\dots,r_m)
\end{equation*}
where $1<q_1,\dots,q_m<\infty$, $1<r_1,\dots,r_m<\infty$ and
\begin{equation*}
  \frac{1}{q}=\sum_{j=1}^m\frac{1}{q_j}\in(\alpha,\alpha+1), 
  \frac{1}{q^*}=\frac{1}{q}-\alpha,
  \frac{1}{r}=\sum_{j=1}^m\frac{1}{r_j}\in(\alpha,\alpha+1), 
  \frac{1}{r^*}=\frac{1}{r}-\alpha.
\end{equation*}
Let $\vec{w}=(w_1,\dots, w_m)\in A_{\vec{r},r^*}(\R^{md})$, $\vec{v}=(v_1,\dots,v_m)\in A_{\vec{q},q^*}(\R^{md})$. Then there exists $\vec{p}=(p_1,\dots,p_m)$ with $1<p_1,\dots,p_m<\infty$ satisfying $\frac{1}{p}=\sum_{j=1}^m\frac{1}{p_j}\in(\alpha,\alpha+1)$, $\frac{1}{p^*}=\frac{1}{p}-\alpha$, $\frac{1}{q}-\frac{1}{q^*}=\frac{1}{p}-\frac{1}{p^*}=\frac{1}{r}-\frac{1}{r^*}$, $\vec{u}=(u_1,\dots,u_m)\in A_{\vec{p},p^*}(\R^{md})$, $\theta\in(0,1)$ such that
\begin{equation}\label{maineq3}
  \frac{1}{r_j}=\frac{1-\theta}{p_j}+\frac{\theta}{q_j},\qquad w_j=u_j^{1-\theta}v_j^{\theta},\qquad j=1,\dots,m,
\end{equation}
and 
\begin{equation}\label{maineq4}
  \frac{1}{r}=\frac{1-\theta}{p}+\frac{\theta}{q},\qquad \nu_{\vec{w}}=\nu_{\vec{u}}^{1-\theta}\nu_{\vec{v}}^{\theta}.
\end{equation}
\end{lemma}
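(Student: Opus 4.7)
The plan is to mirror the proof of Lemma \ref{lem:main1} step by step, with Theorem \ref{thm:Jiao} replaced by Theorem \ref{thm:Moen}. First, I would invoke Theorem \ref{thm:Moen} to reformulate both hypotheses and conclusion as classical Muckenhoupt conditions: $\vec w \in A_{\vec r, r^*}(\R^{md})$ becomes $w_j^{-r'_j} \in A_{mr'_j}(\R^d)$ for all $j$ together with $\nu_{\vec w}^{r^*}\in A_{mr^*}(\R^d)$, and similarly for $\vec v$. The task is then to produce $\vec p$, $\vec u$, $\theta$ such that $u_j^{-p'_j}\in A_{mp'_j}(\R^d)$ and $\nu_{\vec u}^{p^*}\in A_{mp^*}(\R^d)$.

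Second, I would parametrize by $\theta\in[0,1)$ via the only choices compatible with \eqref{maineq3}--\eqref{maineq4}:
\begin{equation*}
  p_j(\theta)=\frac{1-\theta}{\frac{1}{r_j}-\frac{\theta}{q_j}},\qquad
  u_j(\theta)=w_j^{\frac{1}{1-\theta}}v_j^{-\frac{\theta}{1-\theta}},\qquad
  p(\theta)=\frac{1-\theta}{\frac{1}{r}-\frac{\theta}{q}},\qquad
  \frac{1}{p^*(\theta)}=\frac{1}{p(\theta)}-\alpha.
\end{equation*}
At $\theta=0$ this yields $p_j(0)=r_j\in(1,\infty)$, $p(0)=r$ with $1/p(0)\in(\alpha,\alpha+1)$, $p^*(0)=r^*\in(1,\infty)$, and $u_j(0)=w_j$, so every range constraint (including $1/p\in(\alpha,\alpha+1)$, needed so that $p^*(\theta)>1$) persists for all sufficiently small $\theta>0$ by continuity. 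A direct computation gives $\nu_{\vec u(\theta)}=\nu_{\vec w}^{1/(1-\theta)}\nu_{\vec v}^{-\theta/(1-\theta)}$, and one checks that $1/r=(1-\theta)/p+\theta/q$ automatically forces $1/r^*=(1-\theta)/p^*+\theta/q^*$ because the ``$\alpha$'' term cancels.

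Third, for the weight verification I would take an arbitrary cube $Q$ and write
\begin{equation*}
  \ave{u_j^{-p'_j}}_Q\,\ave{u_j^{p'_j/(mp'_j-1)}}_Q^{mp'_j-1}
\end{equation*}
after substituting $u_j=w_j^{1/(1-\theta)}v_j^{-\theta/(1-\theta)}$, then split each average via H\"older with exponents $1+\eps^{\pm1}$ and $1+\delta^{\pm1}$. I would choose $\eps=\eps(\theta)$ and $\delta=\delta(\theta)$ exactly as in Lemma \ref{lem:main1} so that the resulting powers of $w_j^{-r'_j}$, $w_j^{r'_j/(mr'_j-1)}$, $v_j^{-q'_j}$, $v_j^{q'_j/(mq'_j-1)}$ match some continuous functions $\tilde\varrho(\theta),\tilde\tau(\theta),\ldots$ with $\tilde\varrho(0)=\tilde\tau(0)=1$. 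Since the four base weights all belong to classical $A_v$ classes (the two ``dual'' ones via Proposition \ref{prop:weights}), they all satisfy the Coifman--Fefferman reverse H\"older inequality \eqref{eq:RHI} uniformly on a neighborhood of exponent $1$, and this collapses the estimate to a product of $A_{mr'_j}$ and $A_{mq'_j}$ constants of $w_j^{-r'_j}$ and $v_j^{-q'_j}$, which are finite. The same scheme applied to
\begin{equation*}
  \ave{\nu_{\vec u}^{p^*}}_Q\,\ave{\nu_{\vec u}^{-p^*/(mp^*-1)}}_Q^{mp^*-1},\qquad
  \nu_{\vec u}^{p^*}=(\nu_{\vec w}^{r^*})^{\frac{p^*}{r^*(1-\theta)}}(\nu_{\vec v}^{q^*})^{-\frac{p^*\theta}{q^*(1-\theta)}},
\end{equation*}
using the assumed memberships $\nu_{\vec w}^{r^*}\in A_{mr^*}$, $\nu_{\vec v}^{q^*}\in A_{mq^*}$ together with their Proposition \ref{prop:weights} duals, yields $\nu_{\vec u}^{p^*}\in A_{mp^*}(\R^d)$ for small enough $\theta>0$.

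The conceptual ideas are all already present in Lemma \ref{lem:main1}, so the expected main obstacle is purely algebraic: solving for $\eps(\theta),\delta(\theta)$ so that the H\"older exponents in the two averages pair up perfectly with the Proposition \ref{prop:weights} duals, and then verifying that each continuous exponent $\tilde\varrho,\tilde\sigma,\tilde\tau,\tilde\phi$ stays within $[1,1+\eta]$ for a uniform $\eta>0$. Once the correct choice is made, the rest is a transparent application of reverse H\"older, exactly as in \eqref{eq:beforeRHI1}--\eqref{eq:beforeRHI2}.
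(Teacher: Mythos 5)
Your proposal is correct and takes essentially the same route as the paper: parametrize $p_j,u_j,p,\nu_{\vec u}$ by $\theta$, reduce the multilinear $A_{\vec p,p^*}$ condition to classical $A_v$ conditions via Theorem \ref{thm:Moen}, and then run exactly the H\"older/reverse-H\"older scheme of Lemma \ref{lem:main1}. The paper's proof is indeed a near-verbatim transcription of Lemma \ref{lem:main1} with Theorem \ref{thm:Jiao} replaced by Theorem \ref{thm:Moen}, which is precisely what you outline.
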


\begin{proof}
Using Theorem \ref{thm:Moen}, we prove the lemma in its equivalent form: if $$v_j^{-q'_j}\in A_{mq'_j}(\R^d), \quad
\nu_{\vec{v}}^{q^*}\in A_{mq^*}(\R^d)$$ and 
$$w_j^{-r'_j}\in A_{mr'_j}(\R^d), \quad  
\nu_{\vec{w}}^{r^*}\in A_{mr^*}(\R^d),$$ then there exists $\vec{p}=(p_1,\dots,p_m)$ with $1<p_1,\dots,p_m<\infty$ with  $\frac{1}{p}=\sum_{j=1}^m\frac{1}{p_j}\in(\alpha,\alpha+1)$, $\frac{1}{p^*}=\frac{1}{p}-\alpha$ , $\frac{1}{q}-\frac{1}{q^*}=\frac{1}{p}-\frac{1}{p^*}=\frac{1}{r}-\frac{1}{r^*}$ and 
$$u_j^{-p'_j}\in A_{mp'_j}(\R^d), \quad
\nu_{\vec{u}}^{p^*}\in A_{mp^*}(\R^d), \quad\theta 
\in(0,1)$$ such that \eqref{maineq3} and \eqref{maineq4} hold.

Note that the choice of $\theta\in(0,1)$ determines 
\begin{equation*}
  p_j=p_j(\theta)=\frac{1-\theta}{\frac{1}{r_j}-\frac{\theta}{q_j}},\qquad
  u_j=u_j(\theta)=w_j^{\frac{1}{1-\theta}}v_j^{-\frac{\theta}{1-\theta}},\qquad j=1,\dots,m,
\end{equation*}
and
\begin{equation*}
  p=p(\theta)=\frac{1-\theta}{\frac{1}{r}-\frac{\theta}{q}},\qquad
  \nu_{\vec{u}}=\nu_{\vec{u}}(\theta)=\nu_{\vec{w}}^{\frac{1}{1-\theta}}\nu_{\vec{v}}^{-\frac{\theta}{1-\theta}},
\end{equation*}
so it remains to check that we can choose $\theta\in(0,1)$ so that $\vec{p}=(p_1,\dots,p_m)$ with $1<p_1,\dots,p_m<\infty$ satisfying $\frac{1}{p}=\sum_{j=1}^m\frac{1}{p_j}\in(\alpha,\alpha+1)$, $\frac{1}{p^*}=\frac{1}{p}-\alpha$, $\frac{1}{q}-\frac{1}{q^*}=\frac{1}{p}-\frac{1}{p^*}=\frac{1}{r}-\frac{1}{r^*}$ and $u_j^{-p'_j}\in A_{mp'_j}(\R^d)$, $\nu_{\vec{u}}^{p^*}\in A_{mp^*}(\R^d)$. Since $1<p_j(0)=r_j<\infty$ and $1/(\alpha+1)<p(0)=r<1/\alpha$, the first conditions are obvious for small enough $\theta>0$ by continuity. 

We check that $u_j^{-p'_j}\in A_{mp'_j}(\R^d)$, so we consider a cube $Q$ and write
\begin{equation*}
\begin{split}
  \ave{u_j^{-p'_j}}_Q\ave{u_j^{(-p'_j)(-\frac{1}{mp'_j-1})}}_Q^{mp'_j-1}
  &=\ave{w_j^{-\frac{p'_j}{1-\theta}}v_j^{\frac{p'_j\cdot\theta}{1-\theta}}}_Q  \\
  &\qquad\times\ave{w_j^{\frac{p'_j}{(1-\theta)(mp'_j-1)}}v_j^{-\frac{p'_j\cdot\theta}{(1-\theta)(mp'_j-1)}}}_Q^{mp'_j-1}.
\end{split}
\end{equation*}

In the first average, we use H\"older's inequality with exponents $1+\eps^{\pm 1}$, and in the second with exponents $1+\delta^{\pm 1}$ to get
\begin{equation}\label{eq:beforeRHI3}
\begin{split}
  &\leq \ave{w_j^{-\frac{p'_j(1+\eps)}{1-\theta}}}_Q^{\frac{1}{1+\eps}} \ave{v_j^{\frac{p'_j\cdot\theta(1+\eps)}{\eps(1-\theta)}}}_Q^{\frac{\eps}{1+\eps}} \\ &\qquad\times\ave{w_j^{\frac{p'_j(1+\delta)}{(1-\theta)(mp'_j-1)}}}_Q^{\frac{mp'_j-1}{1+\delta}} \ave{v_j^{-\frac{p'_j\cdot\theta(1+\delta)}{\delta(1-\theta)(mp'_j-1)}}}_Q^{\frac{(mp'_j-1)\delta}{1+\delta}}  \\
  &=\ave{(w_j^{-r'_j})^{\varrho_j(\theta)}}_Q^{\frac{1}{1+\eps}}\ave{(v_j^{\frac{q'_j}{mq'_j-1}})^{\sigma_j(\theta)}}_Q^{\frac{\eps}{1+\eps}}  \\
  &\qquad\times\ave{(w_j^{\frac{r'_j}{mr'_j-1}})^{\tau_j(\theta)}}_Q^{\frac{mp'_j-1}{1+\delta}}\ave{(v_j^{-q'_j})^{\phi_j(\theta)}}_Q^{\frac{(mp'_j-1)\delta}{1+\delta}},
\end{split}  
\end{equation}
where
\begin{equation*}
  \varrho_j(\theta):=\frac{p'_j(\theta)(1+\eps)}{r'_j(1-\theta)},\quad
  \sigma_j(\theta):=\frac{\theta p'_j(\theta)(mq'_j-1)(1+\eps)}{q'_j\eps(1-\theta)},
\end{equation*}
and 
\begin{equation*}
  \tau_j(\theta):=\frac{p'_j(\theta)(mr'_j-1)(1+\delta)}{r'_j(1-\theta)(mp'_j(\theta)-1)},\quad
  \phi_j(\theta):=\frac{\theta p'_j(\theta)(1+\delta)}{q'_j\delta(1-\theta)(mp'_j(\theta)-1)}.
\end{equation*}

As in the proof of Lemma \ref{lem:main1}, we choose $\eps=\eps(\theta)$ and $\delta=\delta(\theta)$ in such a way that
\begin{equation*}
  \varrho_j(\theta)=\sigma_j(\theta),\quad \tau_j(\theta)=\phi_j(\theta),
\end{equation*}
which is the same as
\begin{equation*}
  \eps(\theta)=\frac{\theta r'_j(mq'_j-1)}{q'_j},\quad \delta(\theta) =\frac{\theta r'_j}{q'_j(mr'_j-1)}.
\end{equation*}


The strategy to proceed is also the same as in the proof of Lemma \ref{lem:main1}. In particular, we use the reverse H\"older inequality \eqref{eq:RHI} for $A_v(\R^d)$ weights. 

Recalling that $p_j(0)=r_j$, we see that $\varrho_j(0)=\tau_j(0)=1$. By continuity, given any $\eta>0$, we find that 
\begin{equation*}
  \max(\varrho_j(\theta),\tau_j(\theta))\leq 1+\eta\,\,\,\text{for all small enough}\,\,\,\theta>0.
\end{equation*}
By Proposition \ref{prop:weights} each of the four functions
\begin{equation*}
\begin{split}
  w_j^{-r'_j}\in A_{mr'_j}(\R^d), \quad 
  & w_j^{\frac{r'_j}{mr'_j-1}}\in A_{(mr'_j)'}(\R^d), \\
  & v_j^{-q'_j}\in A_{mq'_j}(\R^d), \quad
  v_j^{\frac{q'_j}{mq'_j-1}}\in A_{(mq'_j)'}(\R^d)
\end{split}
\end{equation*}
satisfies the reverse H\"older inequality (\ref{eq:RHI}) for all $t\leq 1+\eta$ and for some $\eta>0$. Thus, for all small enough $\theta>0$, we have
\begin{equation*}
\begin{split}
  \eqref{eq:beforeRHI3}
  &\lesssim\ave{w_j^{-r'_j}}_Q^{\frac{p'_j}{r'_j(1-\theta)}}\ave{v_j^{\frac{q'_j}{mq'_j-1}}}_Q^{\frac{\theta p'_j(mq'_j-1)}{q'_j(1-\theta)}} \\
  &\qquad\times\ave{w_j^{\frac{r'_j}{mr'_j-1}}}_Q^{\frac{p'_j(mr'_j-1)}{r'_j(1-\theta)}}\ave{v_j^{-q'_j}}_Q^{\frac{\theta p'_j}{q'_j(1-\theta)}}  \\
  &=(\ave{w_j^{-r'_j}}_Q\ave{w_j^{\frac{r'_j}{mr'_j-1}}}_Q^{mr'_j-1})^{\frac{p'_j}{r'_j(1-\theta)}}  \\
  &\qquad\times(\ave{v_j^{-q'_j}}_Q\ave{v_j^{\frac{q'_j}{mq'_j-1}}}^{mq'_j-1}_Q)^{\frac{\theta p'_j}{q'_j(1-\theta)}}  \\
  &\leq[w_j^{-r'_j}]_{A_{mr'_j}}^{\frac{q'_j}{q'_j-\theta r'_j}}[v_j^{-q'_j}]_{A_{mq'_j}}^{\frac{\theta r_j(q_j-1)}{q_j-\theta r_j}}.
\end{split}
\end{equation*}
In combination with the lines preceding \eqref{eq:beforeRHI3}, we have shown that
\begin{equation*}
  [u_j^{1-p'_j}]_{A_{mp'_j}}\lesssim[w_j^{-r'_j}]_{A_{mr'_j}}^{\frac{q'_j}{q'_j-\theta r'_j}}[v_j^{-q'_j}]_{A_{mq'_j}}^{\frac{\theta r_j(q_j-1)}{q_j-\theta r_j}}<\infty,
\end{equation*}
provided that $\theta>0$ is small enough. 

Now, we check that $\nu_{\vec{u}}^{p^*}\in A_{mp^*}(\R^d)$, so we consider a cube $Q$ and write
\begin{equation*}
  \ave{\nu_{\vec{u}}^{p^*}}_Q\ave{\nu_{\vec{u}}^{-\frac{p^*}{mp^*-1}}}_Q^{mp^*-1}
  =\ave{\nu_{\vec{w}}^{\frac{p^*}{1-\theta}}\nu_{\vec{v}}^{-\frac{p^*\cdot\theta}{1-\theta}}}_Q  \ave{\nu_{\vec{w}}^{-\frac{p^*}{(1-\theta)(mp^*-1)}}\nu_{\vec{v}}^{\frac{p^*\cdot\theta}{(1-\theta)(mp^*-1)}}}_Q^{mp^*-1}.
\end{equation*}

In the first average, we use H\"older's inequality with exponents $1+\eps^{\pm 1}$, and in the second with exponents $1+\delta^{\pm 1}$ to get
\begin{equation}\label{eq:beforeRHI4}
\begin{split}
  &\leq \ave{\nu_{\vec{w}}^{\frac{p^*(1+\eps)}{1-\theta}}}_Q^{\frac{1}{1+\eps}} \ave{\nu_{\vec{v}}^{-\frac{p^*\cdot\theta(1+\eps)}{\eps(1-\theta)}}}_Q^{\frac{\eps}{1+\eps}}  
  \ave{\nu_{\vec{w}}^{-\frac{p^*(1+\delta)}{(1-\theta)(mp^*-1)}}}_Q^{\frac{mp^*-1}{1+\delta}} \ave{\nu_{\vec{v}}^{\frac{p^*\cdot\theta(1+\delta)}{\delta(1-\theta)(mp^*-1)}}}_Q^{\frac{(mp^*-1)\delta}{1+\delta}}  \\
  &=\ave{({\nu_{\vec{w}}^{r^*}})^{\varrho(\theta)}}_Q^{\frac{1}{1+\eps}}\ave{(\nu_{\vec{v}}^{-\frac{q^*}{mq^*-1}})^{\sigma(\theta)}}_Q^{\frac{\eps}{1+\eps}} \ave{(\nu_{\vec{w}}^{-\frac{r^*}{mr^*-1}})^{\tau(\theta)}}_Q^{\frac{mp^*-1}{1+\delta}}\ave{(\nu_{\vec{v}}^{q^*})^{\phi(\theta)}}_Q^{\frac{(mp^*-1)\delta}{1+\delta}},
\end{split}  
\end{equation}
where
\begin{equation*}
  \varrho(\theta):=\frac{p^*(1+\eps)}{r^*(1-\theta)},\qquad
  \sigma(\theta):=\frac{\theta p^*(mq^*-1)(1+\eps)}{q^*\eps(1-\theta)},
\end{equation*}
and
\begin{equation*}
  \tau(\theta):=\frac{p^*(mr^*-1)(1+\delta)}{r^*(1-\theta)(mp^*-1)},\qquad
  \phi(\theta):=\frac{\theta p^*(1+\delta)}{q^*\delta(1-\theta)(mp^*-1)}.
\end{equation*}

Again, we choose $\eps=\eps(\theta)$ and $\delta=\delta(\theta)$ in such a way that 
\begin{equation*}
  \varrho(\theta)=\sigma(\theta),\quad  \tau(\theta)=\phi(\theta),
\end{equation*}
which means that
\begin{equation*}
  \eps(\theta)=\frac{\theta r^*(mq^*-1)}{q^*},\quad \delta(\theta)=\frac{\theta r^*}{q^*(mr^*-1)}.
\end{equation*}


The strategy to proceed is the same as before. In particular, we use the reverse H\"older inequality \eqref{eq:RHI} for $A_v(\R^d)$ weights. 

Recalling that $p(0)=r$, we see that $\varrho(0)=\tau(0)=1$. By continuity, given any $\eta>0$, we find that 
\begin{equation*}
  \max(\varrho(\theta),\tau(\theta))\leq 1+\eta\,\,\,\text{for all small enough}\,\,\,\theta>0.
\end{equation*}
By Proposition \ref{prop:weights} each of the four functions
\begin{equation*}
\begin{split}
  \nu_{\vec{w}}^{r^*}\in A_{mr^*}(\R^d), \quad  &\nu_{\vec{w}}^{-\frac{r^*}{mr^*-1}}\in A_{(mr^*)'}(\R^d), \\ &\nu_{\vec{v}}^{q^*}\in A_{mq^*}(\R^d), \quad  \nu_{\vec{v}}^{-\frac{q^*}{mq^*-1}}\in A_{(mq^*)'}(\R^d) 
\end{split}
\end{equation*}
satisfies the reverse H\"older inequality (\ref{eq:RHI}) for all $t\leq 1+\eta$ and for some $\eta>0$. Thus, for all small enough $\theta>0$, we have
\begin{equation*}
\begin{split}
  \eqref{eq:beforeRHI4}
  &\lesssim\ave{\nu_{\vec{w}}^{r^*}}_Q^{\frac{p^*}{r^*(1-\theta)}}\ave{\nu_{\vec{v}}^{-\frac{q^*}{mq^*-1}}}_Q^{\frac{\theta p^*(mq^*-1)}{q^*(1-\theta)}}  \\
  &\qquad\times\ave{\nu_{\vec{w}}^{-\frac{r^*}{mr^*-1}}}_Q^{\frac{p^*(mr^*-1)}{r^*(1-\theta)}}\ave{\nu_{\vec{v}}^{q^*}}_Q^{\frac{\theta p^*}{q^*(1-\theta)}}  \\
  &=(\ave{\nu_{\vec{w}}^{r^*}}_Q\ave{\nu_{\vec{w}}^{-\frac{r^*}{mr^*-1}}}_Q^{mr^*-1})^{\frac{p^*}{r^*(1-\theta)}}  \\
  &\qquad\times(\ave{\nu_{\vec{v}}^{q^*}}_Q\ave{\nu_{\vec{v}}^{-\frac{q^*}{mq^*-1}}}^{mq^*-1}_Q)^{\frac{\theta p^*}{q^*(1-\theta)}}  \\
  &\leq[\nu_{\vec{w}}^{r^*}]_{A_{mr^*}}^{\frac{p^*}{r^*(1-\theta)}}[\nu_{\vec{v}}^{q^*}]_{A_{mq^*}}^{\frac{\theta p^*}{q^*(1-\theta)}}.
\end{split}
\end{equation*}
In combination with the lines preceding \eqref{eq:beforeRHI4}, we have shown that
\begin{equation*}
  [\nu_{\vec{u}}^{p^*}]_{A_{mp^*}}\lesssim[\nu_{\vec{w}}^{r^*}]_{A_{mr^*}}^{\frac{p^*}{r^*(1-\theta)}}[\nu_{\vec{v}}^{q^*}]_{A_{mq^*}}^{\frac{\theta p^*}{q^*(1-\theta)}}<\infty,
\end{equation*}
provided that $\theta>0$ is small enough. This concludes the proof.
\end{proof}

We can also connect Theorem \ref{thm:SW} with the linear $A_{p_j/s_j}(\R^d)$, $A_{p_j}(\R^d)$ and $A_{p_j,p_j^*}(\R^d)$ conditions as follows:

\begin{lemma}\label{lem:main4}
Let 
\begin{equation*}
  \vec{q}=(q_1,\dots,q_m),\quad \vec{r}=(r_1,\dots,r_m),\quad
  \vec{s}=(s_1,\dots,s_m) 
\end{equation*}
where $s_j\in[1,\infty)$, $q_j,r_j\in(s_j,\infty)$ and
\begin{equation*}
  \frac{1}{q}=\sum_{j=1}^m\frac{1}{q_j}<1, \quad \frac{1}{r}=\sum_{j=1}^m\frac{1}{r_j}<1, \quad \frac{1}{s}=\sum_{j=1}^m\frac{1}{s_j}. 
\end{equation*}
Let $\vec{v}=(v_1,\dots,v_m)\in \prod_{j=1}^m A_{q_j/s_j}(\R^{d})$, $\vec{w}=(w_1,\dots,w_m)\in\prod_{j=1}^m A_{r_j/s_j}(\\\R^{d})$. Then there exists $\vec{p}=(p_1,\dots,p_m)$, with $p_j\in(s_j,\infty)$ satisfying $\frac{1}{p}=\sum_{j=1}^m\frac{1}{p_j}<1$ and $\vec{u}=(u_1,\dots,u_m)\in\prod_{j=1}^m A_{p_j/s_j}(\R^{d})$, $\theta\in(0,1)$ such that
\begin{equation*}
  \frac{1}{r_j}=\frac{1-\theta}{p_j}+\frac{\theta}{q_j},\qquad w_j^{\frac{1}{r_j}}=u_j^{\frac{1-\theta}{p_j}}v_j^{\frac{\theta}{q_j}},\qquad j=1,\dots,m,
\end{equation*}
and 
\begin{equation*}
  \frac{1}{r}=\frac{1-\theta}{p}+\frac{\theta}{q},\qquad \nu_{\vec{w},\vec{r}}^\frac{1}{r}=\nu_{\vec{u},\vec{p}}^{\frac{1-\theta}{p}}\nu_{\vec{v},\vec{q}}^\frac{\theta}{q}.
\end{equation*}
\end{lemma}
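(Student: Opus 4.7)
My plan is to mimic the proof of Lemma \ref{lem:main1}, but with the multilinear $A_{\vec p/\vec s}(\R^{md})$ condition replaced by the product of individual linear $A_{p_j/s_j}(\R^d)$ conditions, which actually simplifies matters. As in Lemma \ref{lem:main1}, Theorem \ref{thm:SW} forces the definitions
\begin{equation*}
  p_j(\theta) = \frac{1-\theta}{\tfrac{1}{r_j} - \tfrac{\theta}{q_j}}, \qquad u_j(\theta) = w_j^{\frac{p_j}{r_j(1-\theta)}} v_j^{-\frac{p_j\theta}{q_j(1-\theta)}},
\end{equation*}
and the analogous $p=p(\theta)$ and $\nu_{\vec u,\vec p}$. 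Since $p_j(0)=r_j\in(s_j,\infty)$ and $1/p(0)=1/r<1$, continuity yields $p_j\in(s_j,\infty)$ and $1/p<1$ for all sufficiently small $\theta>0$. The two displayed interpolation identities in the statement reduce to the single coordinate-wise identities $\tfrac{1}{r_j}=\tfrac{1-\theta}{p_j}+\tfrac{\theta}{q_j}$ and $w_j^{1/r_j}=u_j^{(1-\theta)/p_j} v_j^{\theta/q_j}$ by elementary algebra: summing the first over $j$ yields the identity for $1/r$, while taking the product of the second over $j$ (raised to $r$, $p$, $q$ respectively) yields the identity for $\nu_{\vec w,\vec r}^{1/r}$.

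The only substantive step is to verify that $u_j\in A_{p_j/s_j}(\R^d)$ for each $j$. By Proposition \ref{prop:weights} (equivalently, the $m=1$ case of Theorem \ref{thm:Jiao}), this is equivalent to showing $u_j^{1-(p_j/s_j)'}\in A_{(p_j/s_j)'}(\R^d)$. I would carry this out by reproducing the first half of the proof of Lemma \ref{lem:main1}, namely the computation that expands the defining $A$-characteristic average, applies H\"older's inequality with exponents $1+\eps^{\pm 1}$ and $1+\delta^{\pm 1}$, chooses $\eps(\theta),\delta(\theta)$ so that the inner exponents $\tilde\varrho_j(\theta),\tilde\tau_j(\theta)$ both equal $1$ at $\theta=0$, and then applies the Coifman--Fefferman reverse H\"older inequality \eqref{eq:RHI}. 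The only modification is that the factor $\tilde m_j=s_j/s$ arising from the multilinear setting is now replaced by $\tilde m_j=1$. Under this substitution, the hypotheses $v_j\in A_{q_j/s_j}(\R^d)$ and $w_j\in A_{r_j/s_j}(\R^d)$ (together with their duals via Proposition \ref{prop:weights}) supply exactly the four $A_v(\R^d)$ memberships for $w_j^{1-(r_j/s_j)'}$, $w_j^{-(1-(r_j/s_j)')/((r_j/s_j)'-1)}$, $v_j^{1-(q_j/s_j)'}$ and $v_j^{-(1-(q_j/s_j)')/((q_j/s_j)'-1)}$ that the reverse H\"older step requires, so the same argument delivers a finite upper bound for $[u_j^{1-(p_j/s_j)'}]_{A_{(p_j/s_j)'}}$ whenever $\theta>0$ is small enough.

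The main potential obstacle is the careful bookkeeping of exponents under the substitution $\tilde m_j\mapsto 1$, but this is entirely mechanical and the pattern has already been established in Lemma \ref{lem:main1}. In contrast with that lemma, \emph{no} independent multilinear condition on $\nu_{\vec u,\vec p}$ needs to be verified here, since the conclusion only demands membership in the product class $\prod_j A_{p_j/s_j}(\R^d)$; this is the genuine simplification in the product case, and it explains why the present lemma is strictly easier than Lemma \ref{lem:main1} despite having the same structural flavor.
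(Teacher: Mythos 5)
Your proof is correct and matches the paper's in substance: the paper's one-line proof simply cites Lemma~4.4 of \cite{HL} applied to each component, and that cited lemma is precisely the single-weight (i.e.\ $\tilde m_j=1$) version of the reverse-H\"older computation in Lemma~\ref{lem:main1} that you propose to reproduce. Your added remark that a single small $\theta>0$ serves all components simultaneously --- because each per-component argument succeeds for all sufficiently small $\theta$ --- is the only bookkeeping point beyond the citation, and you handle it correctly.
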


\begin{proof}
This follows by applying \cite[Lemma 4.4]{HL} to each component separately. 
\end{proof}

%

\begin{lemma}\label{lem:main6}
Let 
\begin{equation*}
  \alpha\ge 0,\quad \vec{q}=(q_1,\dots,q_m),\quad \vec{r}=(r_1,\dots,r_m)
\end{equation*}
where $1<q_1,\dots,q_m<\infty$, $1<r_1,\dots,r_m<\infty$ and
\begin{equation*}
  \frac{1}{q}=\sum_{j=1}^m\frac{1}{q_j}\in(\alpha,\alpha+1), \frac{1}{\tilde q_j}=\frac{1}{q_j}-\frac{\alpha}{m}, \frac{1}{r}=\sum_{j=1}^m\frac{1}{r_j}\in(\alpha,\alpha+1), \frac{1}{\tilde r_j}=\frac{1}{r_j}-\frac{\alpha}{m}. 
\end{equation*}
Let $\vec{w}=(w_1,\dots,w_m)\in\prod_{j=1}^m A_{r_j,\tilde r_j}(\R^{d})$, $\vec{v}=(v_1,\dots,v_m)\in\prod_{j=1}^m A_{q_j,\tilde q_j}(\\\R^{d})$. Then there exists $\vec{p}=(p_1,\dots,p_m)$ with $1<p_1,\dots,p_m<\infty$ satisfying $\frac{1}{p}=\sum_{j=1}^m\frac{1}{p_j}\in(\alpha,\alpha+1)$, $\frac{1}{\tilde p_j}=\frac{1}{p_j}-\frac{\alpha}{m}$, $\frac{1}{q_j}-\frac{1}{\tilde q_j}=\frac{1}{p_j}-\frac{1}{\tilde p_j}=\frac{1}{r_j}-\frac{1}{\tilde r_j}$, $\vec{u}=(u_1,\dots,u_m)\in\prod_{j=1}^m A_{p_j,\tilde p_j}(\R^{d})$, $\theta\in(0,1)$ such that
\begin{equation*}
  \frac{1}{r_j}=\frac{1-\theta}{p_j}+\frac{\theta}{q_j},\qquad w_j=u_j^{1-\theta}v_j^{\theta},\qquad j=1,\dots,m,
\end{equation*}
and 
\begin{equation*}
  \frac{1}{r}=\frac{1-\theta}{p}+\frac{\theta}{q},\qquad \nu_{\vec{w}}=\nu_{\vec{u}}^{1-\theta}\nu_{\vec{v}}^{\theta}.
\end{equation*}
\end{lemma}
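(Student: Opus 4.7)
The plan is to follow verbatim the strategy used for Lemma \ref{lem:main4}, applying the single-component ($m=1$) version of Lemma \ref{lem:main3} to each coordinate $j \in \{1,\dots,m\}$ separately, with the off-diagonal parameter $\alpha$ of Lemma \ref{lem:main3} reinterpreted as $\alpha/m$ for each coordinate. This reduction is consistent because, by the remark following Theorem \ref{thm:Moen}, when $m=1$ the multilinear class $A_{\vec q,q^*}(\mathbb{R}^{md})$ coincides with the classical off-diagonal class $A_{q,q^*}(\mathbb{R}^d)$, and the condition $v_j \in A_{q_j,\tilde q_j}(\mathbb{R}^d)$ with $\tfrac{1}{\tilde q_j} = \tfrac{1}{q_j} - \tfrac{\alpha}{m}$ is precisely the single-component hypothesis at off-diagonal gap $\alpha/m$. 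For each $j$ this supplies a threshold $\theta_j^\star > 0$ such that, for every $\theta \in (0,\theta_j^\star)$, the parameters
\begin{equation*}
  p_j(\theta) = \frac{1-\theta}{\tfrac{1}{r_j}-\tfrac{\theta}{q_j}}, \qquad u_j(\theta) = w_j^{\frac{1}{1-\theta}} v_j^{-\frac{\theta}{1-\theta}}
\end{equation*}
belong to $(1,\infty)$ and $A_{p_j(\theta),\tilde p_j(\theta)}(\mathbb{R}^d)$, respectively, along with the required coordinatewise interpolation identities.

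Next, I would select a single $\theta > 0$ with $\theta < \min_{1\leq j \leq m} \theta_j^\star$, which simultaneously activates all $m$ componentwise conclusions. The passage from coordinatewise to joint relations is then immediate by summation and multiplication: summing $\tfrac{1}{r_j} = \tfrac{1-\theta}{p_j} + \tfrac{\theta}{q_j}$ yields $\tfrac{1}{r} = \tfrac{1-\theta}{p} + \tfrac{\theta}{q}$, while multiplying $w_j = u_j^{1-\theta} v_j^{\theta}$ gives $\nu_{\vec w} = \nu_{\vec u}^{1-\theta}\nu_{\vec v}^{\theta}$. The identities $\tfrac{1}{\tilde p_j} = \tfrac{1}{p_j} - \tfrac{\alpha}{m}$ and $\tfrac{1}{q_j}-\tfrac{1}{\tilde q_j} = \tfrac{1}{p_j}-\tfrac{1}{\tilde p_j} = \tfrac{1}{r_j}-\tfrac{1}{\tilde r_j}$ hold by construction, and the constraint $\tfrac{1}{p} \in (\alpha,\alpha+1)$ holds for small $\theta$ by continuity, since $p(0)=r$ and $\tfrac{1}{r}\in(\alpha,\alpha+1)$ is part of the hypothesis.

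The only non-routine obstacle is the coordinatewise Muckenhoupt assertion $u_j(\theta) \in A_{p_j(\theta),\tilde p_j(\theta)}(\mathbb{R}^d)$, and this is handled exactly as in the proof of Lemma \ref{lem:main3}: apply Theorem \ref{thm:Moen} in the linear ($m=1$) case to split the $A_{p_j,\tilde p_j}$-condition into two $A_p$-conditions on powers of $u_j$, then split the defining averages by Hölder's inequality with a pair of parameters $\varepsilon(\theta), \delta(\theta)$ chosen to match exponents, and finally absorb the resulting powers using the reverse Hölder inequality \eqref{eq:RHI} for $A_v$ weights together with Proposition \ref{prop:weights}. Since $\varrho_j(0) = \tau_j(0) = 1$ at the reverse Hölder stage, all such estimates become valid once $\theta > 0$ is taken small enough, giving the finitely many thresholds $\theta_j^\star$ above. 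Alternatively, the entire single-coordinate statement is the off-diagonal linear extrapolation lemma of \cite{HL} (the off-diagonal analog of the result cited in Lemma \ref{lem:main4}), which may simply be invoked componentwise, making the proof as succinct as that of Lemma \ref{lem:main4}.
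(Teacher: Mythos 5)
Your proposal is correct, and your closing ``alternative'' is exactly the paper's proof: Lemma~\ref{lem:main6} is proved in one line by invoking \cite[Lemma 4.7]{HL} (the off-diagonal linear analogue of the lemma used for Lemma~\ref{lem:main4}) coordinate-by-coordinate. Your primary route --- running the $m=1$ case of Lemma~\ref{lem:main3} with off-diagonal gap $\alpha/m$ on each coordinate and then taking $\theta$ below the minimum of the resulting thresholds --- is a valid but longer rederivation of that same linear lemma, so both routes lead to the same argument.
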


\begin{proof}
This follows by applying \cite[Lemma 4.7]{HL} to each component separately. 
\end{proof}

We now have the last missing ingredient of the proof of Theorem \ref{thm:main}:

\begin{proof}[Proof of Proposition \ref{prop:main}]
We prove the proposition in the case that the assumptions \ref{(1a)} are in force. The other cases are proved in a similar way.
We are given $\vec{q}=(q_1,q_2)$, $\vec{r}=(r_1,r_2)$, $\vec{s}=(s_1,s_2)$ with $s_j\in[1,\infty)$, $q_j,r_j\in(s_j,\infty)$ $(j=1,2)$ satisfying $\frac{1}{q}=\sum_{j=1}^2\frac{1}{q_j}<1$, $\frac{1}{r}=\sum_{j=1}^2\frac{1}{r_j}<1$, $\frac{1}{s}=\sum_{j=1}^2\frac{1}{s_j}$ and weights $\vec{w}=(w_1,w_2)\in A_{\vec{r}/\vec{s}}(\R^{2d})$, $\vec{v}=(v_1,v_2)\in A_{\vec{q}/\vec{s}}(\R^{2d})$. By Lemma \ref{lem:main1}, there are some $\vec{p}=(p_1,p_2)$ with $p_j\in(s_j,\infty)$ $(j=1,2)$ satisfying $\frac{1}{p}=\sum_{j=1}^2\frac{1}{p_j}<1$, weights $\vec{u}=(u_1,u_2)\in A_{\vec{p}/\vec{s}}(\R^{2d})$, and $\theta\in(0,1)$ such that
\begin{equation*}
  \frac{1}{r_j}=\frac{1-\theta}{p_j}+\frac{\theta}{q_j},\qquad w_j^{\frac{1}{r_j}}=u_j^{\frac{1-\theta}{p_j}}v_j^{\frac{\theta}{q_j}},\qquad j=1,2,
\end{equation*}
and 
\begin{equation*}
  \frac{1}{r}=\frac{1-\theta}{p}+\frac{\theta}{q},\qquad \nu_{\vec{w},\vec{r}}^\frac{1}{r}=\nu_{\vec{u},\vec{p}}^{\frac{1-\theta}{p}}\nu_{\vec{v},\vec{q}}^\frac{\theta}{q}.
\end{equation*}
By Theorem \ref{thm:SW}, we then have
\begin{equation*}
  [L^{p_j}(u_j),L^{q_j}(v_j)]_\theta=L^{r_j}(w_j),\qquad [L^{p}(\nu_{\vec{u},\vec{p}}),L^{q}(\nu_{\vec{v},\vec{q}})]_\theta=L^{r}(\nu_{\vec{w},\vec{r}}),
\end{equation*}
as we claimed.
\end{proof}

\section{Commutators of bilinear Calder\'on--Zygmund operators}\label{section 6}

In the remaining sections of this paper, we consider a number of applications of our abstract results to specific classes of operators. In our first application below, we consider bilinear Calder\'on--Zygmund operators which are defined as follows:

Let $T$ be a multilinear operator initially defined on the $m$-fold product of Schwartz spaces and taking values in the space of tempered distributions,
\begin{equation*}
  T:\testi(\R^d)\times\cdots\times\testi(\R^d)\to\testi'(\R^d).
\end{equation*}
Following \cite{GT}, we say that $T$ is an $m$-linear Calder\'on--Zygmund operator if, for some $1\leq q_j<\infty$, it extends to a bounded multilinear operator from $L^{q_1}(\R^d)\times\cdots\times L^{q_m}(\R^d)$ to $L^{q}(\R^d)$, where $\frac{1}{q}=\sum_{j=1}^m\frac{1}{q_j}$, and it has the representation
\begin{equation}\label{CZO}
  T(f_1,\dots,f_m)(x)=\int_{(\R^d)^m}K(x,y_1,\dots,y_m)f_1(y_1)\dots f_m(y_m)dy_1\dots dy_m,
\end{equation}
for all $x\notin\cap_{j=1}^m\supp f_j$, where the kernel $K$ satisfies the {\em size condition}
\begin{equation}\label{Kernel1}
  |K(x,y_1,\dots,y_m)|\lesssim\frac{1}{(\sum_{j=1}^m|x-y_j|)^{md}}
\end{equation}
for all $(x,y_1,\dots,y_m)\in(\R^d)^{m+1}$ with $x\neq y_j$ for some $j\in\{1,2,\dots,m\}$ and the {\em smoothness condition}
\begin{equation}\label{Kernel2}
  |K(x,\dots,y_j,\dots,y_m)-K(x,y_1,\dots,z,\dots,y_m)|\lesssim\frac{|y_j-z|^{\eps}}{(\sum_{j=1}^m|x-y_j|)^{md+\eps}},
\end{equation}
for some $\eps>0$ and all $1\leq j\leq m$, whenever $|y_j-z|\leq\frac{1}{2}\max_{1\leq j\leq m}|x-y_j|$. Also, $T$ is called the multilinear Calder\'on--Zygmund operator associated with the kernel $K$.

In \cite{BC, KLOPTG, PPTG}, the following weighted boundedness results about the bilinear Calder\'on--Zygmund operator associated with kernel $K$ and its commutators were obtained:

\begin{theorem}[\cite{BC}, Theorem 1.2, \cite{KLOPTG}, Corollary 3.9,
Theorem 3.18 and \cite{PPTG}, Theorem 1.1]\label{thm:bdd. of CZO}
Suppose that $\vec{w}\in A_{\vec{p}}(\R^{2d})$ and $\vec{b}\in\BMO(\R^d)^2$ with $\frac{1}{p}=\sum_{j=1}^2\frac{1}{p_j}$, $1<p_j<\infty$, $j=1,2$, $p\in(1,\infty)$. Then $T$ and $[T,\vec{b}]_{\alpha}$ for each $\alpha\in\{(0,1),(1,0),(1,1)\}$ are bounded bilinear operators from $L^{p_1}(w_1)\times L^{p_2}(w_2)$ to $L^p(\nu_{\vec w,\vec{p}})$ under either of the following cases:
\begin{enumerate}[(1)]
  \item\namedlabel{thm:KLOPTG-PPTG bdd.}{(1)} $T$ is a bilinear Calder\'on--Zygmund operator associated with kernel $K$ satisfying \eqref{CZO}, \eqref{Kernel1}, \eqref{Kernel2}.
  \item\namedlabel{thm:BC bdd.}{(2)} $T$ is a bilinear singular integral operator associated with a kernel $K$ in the sense of \eqref{CZO} and satisfying \eqref{Kernel1}, and 
\begin{enumerate} 
  \item[(a)] $T$ is bounded from
  \begin{equation}\label{CZO1}
  L^1(\R^d)\times L^1(\R^d)\to L^{1/2,\infty}(\R^d),
  \end{equation}
  where $L^{1/2,\infty}(\R^d)$ is the weak $L^{1/2}$ space.
  \item[(b)] for $x,z,y_1,y_2\in\R^d$ with $8|x-z|<\min_{1\leq j\leq 2}|x-y_j|$,
  \begin{equation}\label{Kernel4}
  |K(x,y_1,y_2)-K(z,y_1,y_2)|\lesssim\frac{\tau^{\eps}}{(\sum_{j=1}^2|x-y_j|)^{2d+\eps}},
  \end{equation}
\end{enumerate}
where $\tau$ is a number such that $2|x-z|<\tau$ and $4\tau<\min_{1\leq j\leq 2}|x-y_j|$.
\end{enumerate}
\end{theorem}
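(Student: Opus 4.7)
The plan is to treat the two items separately, since the kernel hypotheses differ, but both reduce by now-standard machinery to weighted bounds for $T$ followed by a commutator argument.

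For item \ref{thm:KLOPTG-PPTG bdd.}, I would first establish the weighted bounds for $T$ itself. The modern route is pointwise \emph{sparse domination}: one shows that for each $(f_1,f_2)\in \testi(\R^d)^2$ there exists a sparse family $\mathcal{S}$ (depending on the data) with
\begin{equation*}
  |T(f_1,f_2)(x)| \lesssim \sum_{Q\in\mathcal{S}} \ave{|f_1|}_Q\ave{|f_2|}_Q \chi_Q(x),
\end{equation*}
following the Lerner-type construction adapted to the bilinear setting. Given \eqref{CZO}--\eqref{Kernel2}, the key ingredient is a suitable bilinear maximal truncation bound combined with a stopping-time argument on the grand maximal truncation. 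Once the sparse bound is in hand, the $L^{p_1}(w_1)\times L^{p_2}(w_2)\to L^p(\nu_{\vec w,\vec p})$ estimate for $\vec w\in A_{\vec p}(\R^{2d})$ follows by testing the sparse bilinear form against characteristic functions and invoking the definition of $A_{\vec p}$, as in \cite{KLOPTG}.

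For the commutators $[T,\vec b]_\alpha$, I would use the Coifman--Rochberg--Weiss Cauchy integral trick: for $b\in\BMO(\R^d)$ and small $|z|$, the multiplication operator $e^{\pm zb}$ preserves $A_{\vec p}$ up to a perturbation (this is quantitative because $A_{\vec p}$ is an open condition and $[e^{zb}w]_{A_p}$ depends continuously on $z$ by the John--Nirenberg inequality). Writing
\begin{equation*}
  [T,b]_{e_j}(f_1,f_2) = \frac{d}{dz}\Big|_{z=0}\bigl(e^{zb}T(e^{-zb}f_1,f_2)\bigr) \;\text{(when } j=1\text{)}
\end{equation*}
as a Cauchy contour integral over a small circle, the weighted bound transfers from $T$ to $[T,b]_{e_j}$ with an extra factor $\|b\|_{\BMO}$. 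Iterating in the second slot yields the bound for $[T,\vec b]_{(1,1)}$. An alternative that stays entirely within sparse technology is to establish a sparse commutator domination with oscillation weights $|b_j-\ave{b_j}_Q|$; this is the route of \cite{PPTG}.

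For item \ref{thm:BC bdd.}, the kernel only satisfies a first-variable $\tau$-type smoothness \eqref{Kernel4}, so the classical pointwise sparse bound is not directly available. Following \cite{BC}, I would prove the weighted weak $L^{1/2}$ endpoint via a bilinear Calder\'on--Zygmund decomposition using \eqref{CZO1} as the starting point, and deduce strong $L^p$ estimates for $\vec w\in A_{\vec p}$ by Marcinkiewicz interpolation combined with the multilinear extrapolation Theorem~\ref{thm:extrapol. bdd.}. The commutator case then follows by Theorem~\ref{thm: Ap/s bdd.} with $\vec s=(1,1)$, which upgrades the assumed weighted bounds for $T$ to the same bounds for $[T,\vec b]_\alpha$ automatically. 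The main obstacle in this second regime is the weak endpoint: the reduced smoothness \eqref{Kernel4} forces one to track the auxiliary parameter $\tau$ very carefully in the bad-part estimate of the Calder\'on--Zygmund decomposition, where one would otherwise use the stronger \eqref{Kernel2}.
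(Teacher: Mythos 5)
The paper does not prove this theorem; as the bracketed attribution shows, it is quoted from \cite{BC}, \cite{KLOPTG} and \cite{PPTG}, with only a remark clarifying the attribution and the relation between \eqref{Kernel2} and \eqref{Kernel4}. Your proposal therefore takes on a larger task than the paper itself does, and it is worth commenting on the substance.

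For item \ref{thm:KLOPTG-PPTG bdd.} your sketch is defensible but departs from the cited sources: \cite{KLOPTG} predates sparse domination and runs instead through the multi(sub)linear maximal function, a sharp-maximal pointwise bound $M^{\sharp}_{\delta}(T(f_1,f_2))\lesssim \mathcal{M}(f_1,f_2)$, and Fefferman--Stein, and \cite{PPTG} handles the iterated commutators along similar lines. Sparse domination does give the same conclusion, so this is a legitimate alternative route. Your Cauchy-integral reduction of the commutator is also a valid alternative, but the step you gloss over is the real content: one must check that for $|z|$ small the conjugated tuple $(e^{p_1\Re(z) b_1}w_1, w_2)$ (and the induced $\nu$) stays in $A_{\vec p}(\R^{2d})$. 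Unlike the scalar $A_p$ case this is a coupled condition across the two slots, and openness of $A_{\vec p}$ under such one-slot conjugations has to be proved, not asserted. Your last observation, that Theorem~\ref{thm: Ap/s bdd.} with $\vec s=(1,1)$ automatically upgrades the weighted bounds for $T$ to weighted bounds for all $[T,\vec b]_\alpha$, is correct and is in fact the cleanest route for the commutator part; it applies equally in case \ref{thm:KLOPTG-PPTG bdd.}, where it would let you avoid both the Cauchy-integral and the sparse-commutator arguments entirely.

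The genuine gap is in item \ref{thm:BC bdd.}. The pipeline ``weighted weak $L^{1/2}$ endpoint $\to$ Marcinkiewicz interpolation $\to$ extrapolation Theorem~\ref{thm:extrapol. bdd.}'' does not parse. Theorem~\ref{thm:extrapol. bdd.} takes as input a \emph{strong}-type weighted bound at one exponent tuple and all $A_{\vec p}$ weights; a weak endpoint at $\vec p=(1,1)$ is not admissible input. And there is no second endpoint against which to interpolate — certainly not $L^\infty\times L^\infty\to L^\infty$ — so Marcinkiewicz cannot by itself promote a single weighted weak endpoint to weighted strong bounds. What the references actually do under the weaker smoothness \eqref{Kernel4} is establish a Cotlar/Fefferman--Stein type sharp maximal function estimate (the \cite{DGGLY} machinery for non-smooth kernels), from which the weighted $L^p$ bounds at a single exponent tuple follow; only then can one extrapolate. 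To make item \ref{thm:BC bdd.} rigorous you need to replace the Marcinkiewicz step with that sharp-maximal argument, or else invoke an endpoint-capable extrapolation theorem such as those in \cite{LMMOV,N}, which is not the same as Theorem~\ref{thm:extrapol. bdd.}.
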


\begin{remark}
The boundedness of $T$ in \cite{BC} is not explicitly stated as a theorem but it is implicitly contained in the proof of the boundedness of the commutator. As explained in \cite[Theorem 1.2]{BC}, \cite[Corollary 3.9 and Theorem 3.18]{KLOPTG} and \cite[Theorem 1.1]{PPTG} all the previous bounds for iterated commutators hold for $0<p<\infty$.
Also, it was pointed out in \cite[Proof of Theorem 1]{HZ} (see also \cite[Propositions 2.3, 4.1 and Remark 4.2]{DGGLY}) that the condition \eqref{Kernel4} is weaker than, and indeed a consequence of \eqref{Kernel2}.
\end{remark}

The compactness of the commutator $[T,\vec{b}]_{\alpha}$ was considered by B\'enyi--Torres \cite{BT2013} and Bu--Chen \cite{BC} in the unweighted case:

\begin{theorem}[\cite{BT2013}, Theorem 1 and \cite{BC}, Theorem 1.1]\label{thm:comp. of CZO}
Suppose that $\vec{b}\in\CMO(\R^d)^2$, $\frac{1}{p_1}+\frac{1}{p_2}=\frac{1}{p}$, $1<p_1,p_2<\infty$ and $1< p<\infty$. Then $[T,\vec{b}]_{\alpha}$ is compact from $L^{p_1}(\R^d)\times L^{p_2}(\R^d)$ to $L^{p}(\R^d)$ in each of the cases \ref{thm:KLOPTG-PPTG bdd.} and \ref{thm:BC bdd.} of Theorem \ref{thm:bdd. of CZO}.
\end{theorem}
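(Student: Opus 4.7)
The plan is a two-step argument: first a density/perturbation reduction to smooth compactly supported symbols, then a direct verification of the Fr\'echet--Kolmogorov compactness criterion for such symbols.

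First, I would exploit the definition $\CMO(\R^d)=\overline{C_c^\infty(\R^d)}^{\BMO}$ to approximate any $\vec b\in\CMO(\R^d)^2$ by $\vec b^{(n)}\in C_c^\infty(\R^d)^2$ with $\|b_j-b_j^{(n)}\|_{\BMO}\to 0$. From the unweighted case of the quantitative estimate in Theorem \ref{thm: Ap/s bdd.}, which has explicit dependence $\prod_j\|b_j\|_{\BMO}^{\alpha_j}$, together with the multilinear expansion of $[T,\vec b]_\alpha-[T,\vec b^{(n)}]_\alpha$ into commutators that involve $b_j-b_j^{(n)}$ in at least one slot, one obtains norm convergence $[T,\vec b^{(n)}]_\alpha\to[T,\vec b]_\alpha$ in $\mathscr{L}(L^{p_1}\times L^{p_2},L^p)$. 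Since operator-norm limits of compact bilinear operators are compact, it suffices to prove the theorem under the extra hypothesis $b_1,b_2\in C_c^\infty(\R^d)$.

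Second, for such smooth and compactly supported symbols I would invoke the bilinear Fr\'echet--Kolmogorov criterion: the image of the product of unit balls under $G:=[T,\vec b]_\alpha$ is precompact in $L^p(\R^d)$ provided it is (i) bounded in $L^p$, (ii) equicontinuous under translations in $L^p$, and (iii) tight at infinity in $L^p$, all uniformly in $(f_1,f_2)\in B_{p_1}\times B_{p_2}$. Point (i) is Theorem \ref{thm:bdd. of CZO}. Points (ii) and (iii) follow from direct kernel estimates; for $\alpha=(1,0)$ one writes
\begin{equation*}
G(f_1,f_2)(x)=\int_{(\R^d)^2}K(x,y_1,y_2)\bigl(b_1(x)-b_1(y_1)\bigr)f_1(y_1)f_2(y_2)\ud y_1\ud y_2,
\end{equation*}
and uses compactness of $\supp b_1$ to extract decay in $|x|$ from the size bound \eqref{Kernel1}, yielding (iii); for (ii) one splits the region of integration into a neighborhood of $\{\max_j|x-y_j|\lesssim|h|\}$ and its complement: on the neighborhood one combines the size bound with the Lipschitz regularity of $b_j$ and H\"older's inequality, while on the complement one uses the H\"older smoothness \eqref{Kernel2} of $K$ against the translation increment.

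The main obstacle is the technical bookkeeping in the kernel estimates of the last step. In case \ref{thm:BC bdd.} the kernel only satisfies the weaker smoothness \eqref{Kernel4}, so the scale parameter $\tau\sim|h|$ must be chosen carefully and the geometric splitting is more delicate. The iterated commutator $[T,\vec b]_{(1,1)}$ brings an additional product $(b_1(x)-b_1(y_1))(b_2(x)-b_2(y_2))$ into the integrand, which must be handled symmetrically in $y_1$ and $y_2$, typically by further subdividing into regions according to which of $|x-y_1|$ and $|x-y_2|$ is larger. The detailed verifications along these lines are precisely the content of \cite{BT2013,BC}.
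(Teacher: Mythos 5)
The paper does not prove Theorem~\ref{thm:comp. of CZO}; it is imported verbatim from \cite{BT2013} (smooth kernels) and \cite{BC} (non-smooth kernels), so there is no ``paper's proof'' to compare against. Your sketch is the standard route taken in those references, and the two reductions you single out are correct: (a) writing $b_j=b_j^{(n)}+(b_j-b_j^{(n)})$ and expanding $[T,\vec b]_\alpha$ multilinearly does produce, for each $\alpha\in\{(1,0),(0,1),(1,1)\}$, a decomposition $[T,\vec b]_\alpha=[T,\vec b^{(n)}]_\alpha+\sum(\text{terms with }b_j-b_j^{(n)}\text{ in some slot})$, and the quantitative $\BMO$-dependence (from Theorem~\ref{thm: Ap/s bdd.} specialized to $\vec v\equiv 1$, or directly from \cite{KLOPTG,PPTG}) gives operator-norm convergence; since an operator-norm limit of compact bilinear maps is compact, the reduction to $\vec b\in C_c^\infty(\R^d)^2$ is legitimate; (b) for such $\vec b$, one verifies boundedness, $L^p$-equicontinuity under translations, and tightness at infinity, i.e.\ the (unweighted) Fr\'echet--Kolmogorov criterion. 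One caveat worth making explicit: in step (b) you use the kernel representation \eqref{CZO} for $[T,\vec b]_\alpha(f_1,f_2)(x)$ with $x$ far from $\supp b_1\cup\supp b_2$; this is justified because, e.g.\ for $\alpha=(1,0)$, one has $[T,\vec b]_{(1,0)}(f_1,f_2)(x)=-T(b_1f_1,f_2)(x)$ there, and $\bigcap_j\supp(g_j)\subset\supp(b_1f_1)\subset\supp b_1\not\ni x$, so the representation applies; a similar remark is needed for $(1,1)$. The remaining gap is the one you flag yourself --- the actual estimates for (ii)--(iii), especially under the weaker smoothness \eqref{Kernel4} and for the iterated commutator --- which is precisely the technical content of \cite{BT2013,BC}. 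Note also that the paper's surrounding remark mentions ``smooth truncation operators'' \`a la \cite{ClopCruz,KL}: the cited proofs typically truncate the kernel $K$ near the diagonal as an additional reduction, whereas you propose verifying Fr\'echet--Kolmogorov directly, relying on the extra factor $b_j(x)-b_j(y_j)=O(|x-y_j|)$ to tame the diagonal singularity; both variants work, but if you carry out the estimates yourself the truncation reduction makes the near-diagonal bookkeeping cleaner, particularly for case~\ref{thm:BC bdd.}.
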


A combination of the above Theorems \ref{thm:bdd. of CZO} and \ref{thm:comp. of CZO} with our main Theorem \ref{thm:main} recovers and improves the following results of B\'enyi et al. \cite[Theorems 3.1 and 3.2]{BDMT2015} and Bu--Chen \cite[Theorem 1.1]{BC}, lifting their additional assumption that $\nu_{\vec w,\vec{p}}\in A_{p}(\R^d)$:

\begin{theorem}\label{thm:weighted comp. of CZO}
Assume $\vec{b}\in\CMO(\R^d)^2$, $p_1,p_2\in(1,\infty)$, $p\in(1,\infty)$ such that $1/p=1/p_1+1/p_2$ and  $\vec{w}=(w_1,w_2)\in A_{\vec{p}}(\R^{2d})$. Then $[T,\vec{b}]_{\alpha}$ is compact from  $L^{p_1}(w_1)\times L^{p_2}(w_2)$ to $L^p(\nu_{\vec w,\vec{p}})$ in each of the cases \ref{thm:KLOPTG-PPTG bdd.} and \ref{thm:BC bdd.} of Theorem \ref{thm:bdd. of CZO}.
\end{theorem}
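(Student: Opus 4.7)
The plan is to derive Theorem \ref{thm:weighted comp. of CZO} as a direct application of Theorem \ref{thm:main} in case \ref{(1a)} with $\vec s=(1,1)$, so that $A_{\vec p/\vec s}(\R^{2d})$ coincides with the usual multilinear class $A_{\vec p}(\R^{2d})$. Accordingly, let
\begin{equation*}
\Theta=\Big\{\Big(L^{q_1}(v_1),L^{q_2}(v_2),L^q(\nu_{\vec v,\vec q})\Big):\vec q\in(1,\infty)^2,\;\tfrac{1}{q}=\tfrac{1}{q_1}+\tfrac{1}{q_2}<1,\;\vec v\in A_{\vec q}(\R^{2d})\Big\}.
\end{equation*}
Since the hypotheses of Theorem \ref{thm:weighted comp. of CZO} place the triple $\bigl(L^{p_1}(w_1),L^{p_2}(w_2),L^p(\nu_{\vec w,\vec p})\bigr)$ in $\Theta$, it suffices to verify the two abstract hypotheses \eqref{condit. 1 of bdd.} and \eqref{condit. 1 of cmp.} for the operator $[T,\vec b]_\alpha$ with respect to this $\Theta$.

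For the boundedness hypothesis \eqref{condit. 1 of bdd.}, I would invoke Theorem \ref{thm:bdd. of CZO}: in both cases \ref{thm:KLOPTG-PPTG bdd.} and \ref{thm:BC bdd.}, the commutator $[T,\vec b]_\alpha$ is bounded from $L^{q_1}(v_1)\times L^{q_2}(v_2)$ into $L^q(\nu_{\vec v,\vec q})$ for every $\vec q\in(1,\infty)^2$ with $1/q<1$ and every $\vec v\in A_{\vec q}(\R^{2d})$, which is precisely what \eqref{condit. 1 of bdd.} demands. For the compactness hypothesis \eqref{condit. 1 of cmp.}, I only need a single triple in $\Theta$ on which $[T,\vec b]_\alpha$ is compact. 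I would choose the unweighted triple $(L^{p_1^\circ}(\R^d),L^{p_2^\circ}(\R^d),L^{p^\circ}(\R^d))$ for any fixed $p_1^\circ,p_2^\circ\in(1,\infty)$ with $1/p^\circ=1/p_1^\circ+1/p_2^\circ<1$; the constant weight $1$ lies trivially in $A_{\vec p^\circ}(\R^{2d})$, so this triple belongs to $\Theta$, and Theorem \ref{thm:comp. of CZO} supplies the required compactness of $[T,\vec b]_\alpha$ in both cases \ref{thm:KLOPTG-PPTG bdd.} and \ref{thm:BC bdd.}.

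With both \eqref{condit. 1 of bdd.} and \eqref{condit. 1 of cmp.} in place, Theorem \ref{thm:main} delivers the compactness of $[T,\vec b]_\alpha:Z_1\times Z_2\to Z$ for every $(Z_1,Z_2,Z)\in\Theta$, and applying this to the specific triple $(L^{p_1}(w_1),L^{p_2}(w_2),L^p(\nu_{\vec w,\vec p}))$ yields the conclusion. There is really no obstacle to overcome in this argument: all the analytic depth has been absorbed into Theorem \ref{thm:main}, and the only point requiring a brief verification is that the assumption $p\in(1,\infty)$ in Theorem \ref{thm:weighted comp. of CZO} matches exactly the range $1/p<1$ imposed in case \ref{(1a)} of Theorem \ref{thm:main}, so that the target triple lies in $\Theta$ and no additional restriction appears in the conclusion compared with \cite[Theorems 3.1, 3.2]{BDMT2015} and \cite[Theorem 1.1]{BC}.
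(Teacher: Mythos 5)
Your proposal is correct and follows essentially the same route as the paper's own proof: both invoke case \ref{(1a)} of Theorem \ref{thm:main} with $\vec s=(1,1)$, verify the boundedness hypothesis via Theorem \ref{thm:bdd. of CZO}, and supply the single compact instance via the unweighted Theorem \ref{thm:comp. of CZO}. The only cosmetic difference is that the paper treats cases \ref{thm:KLOPTG-PPTG bdd.} and \ref{thm:BC bdd.} one at a time rather than simultaneously, which changes nothing substantively.
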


\begin{proof}
We prove the theorem  in the case that the assumptions \ref{thm:KLOPTG-PPTG bdd.} of Theorem \ref{thm:bdd. of CZO} are in force. The other case is proved in a similar way.
We verify the assumptions \ref{(1a)} of Theorem \ref{thm:main} for $[T,\vec{b}]_{\alpha}$ for each $\alpha\in\{(0,1),(1,0),(1,1)\}$ in place of $T$: By Theorem \ref{thm:bdd. of CZO}, $[T,\vec{b}]_{\alpha}$ is a bounded operator from $L^{q_1}(u_1)\times L^{q_2}(u_2)$ to $L^q(\nu_{\vec u,\vec{q}})$ for all $\vec{q}=(q_1,q_2)\in(1,\infty)^2$, $q=q_1 q_2/(q_1+q_2)>1$ and all $\vec{u}\in A_{\vec{q}}(\R^{2d})$. By Theorem \ref{thm:comp. of CZO}, $[T,\vec{b}]_{\alpha}$ is compact from $L^{r_1}(\R^d)=L^{r_1}(v_1)\times L^{r_2}(\R^d)=L^{r_2}(v_2)$ to $L^{r}(\R^d)=L^r(\nu_{\vec v,\vec{r}})$ with $\vec{v}=(v_1,v_2)\equiv(1,1)\in A_{\vec{r}}(\R^{2d})$ and $\nu_{\vec v,\vec{r}}\equiv 1$. Thus Theorem \ref{thm:main} applies to give the compactness of $[T,\vec{b}]_{\alpha}$ from $L^{p_1}(w_1)\times L^{p_2}(w_2)$ to $L^p(\nu_{\vec w,\vec{p}})$ for all $\vec{p}=(p_1,p_2)\in(1,\infty)^2$, $p=p_1 p_2/(p_1+p_2)>1$ and all $\vec{w}\in A_{\vec{p}}(\R^{2d})$.
\end{proof}

The proofs in \cite{BDMT2015,BC} were based on considering smooth truncation operators (\cite{ClopCruz, KL}) and verifying a weighted Fr\'echet--Kolmogorov criterion \cite{ClopCruz}. We avoid these considerations and obtain a more general theorem than these earlier approaches. However, the very recent work of Cao--Olivo--Yabuta \cite{COY} achieves a further generalisation (lifting also the assumption that $p>1$) by further developing the approach based on a weighted Fr\'echet--Kolmogorov criterion. It might be interesting to investigate whether the full scope of the results of \cite{COY} could be recovered avoiding this criterion.

\section{Commutators of bilinear fractional integral operators}\label{section 7}

In this section we apply Theorem \ref{thm:main} to the commutator $[I_{\beta},\vec{b}]_{\alpha}$, where $\alpha\in\{(0,1),(1,0),(1,1)\}$ and, given $0<\beta<2d$, the bilinear fractional integral operator $I_{\beta}$ is defined by
\begin{equation*}
  I_{\beta}(f_1,f_2)(x)=\int_{\R^{2d}} 
\frac{1}{(|x-y_1|^2+|x-y_2|^2)^{2d-\beta}} f_1(y_1)f_2(y_2)dy_1 dy_2. 
\end{equation*}

Chen--Wu \cite{CW} and Moen \cite{Moen} obtained the following weighted boundedness results of the bilinear fractional integral operator and its commutators:

\begin{theorem}[\cite{CW}, Theorems 1.4, 1.7 and \cite{Moen}, Theorem 3.5]\label{thm:weigh. bdd. Ib}
Suppose that $0<\beta<2d$, $\vec{b}\in\BMO(\R^d)^2$ and $1<p_1,p_2<\infty$ are exponents with $1/p=1/p_1+1/p_2, 1/2<p<d/\beta$ and $q$ is the exponent defined by $1/q=1/p-\beta/d$. Then $I_{\beta}$ and $[I_{\beta},\vec{b}]_{\alpha}$ for each  $\alpha\in\{(0,1),(1,0),(1,1)\}$ are bounded bilinear operators from $L^{p_1}(w_1^{p_1})\times L^{p_2}(w_2^{p_2})$ to $L^q(\nu_{\vec{w}}^q) $ for all $\vec{w}=(w_1,w_2)\in A_{\vec{p}, q}(\R^{2d})$.
\end{theorem}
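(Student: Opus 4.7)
The plan is to handle the two claims separately: first the boundedness of $I_\beta$ itself on weighted spaces, and then bootstrap to the commutators via the classical Coifman--Rochberg--Weiss conjugation method adapted to the bilinear setting. Since the target class $A_{\vec p, q}(\R^{2d})$ is rather complicated, Theorem \ref{thm:Moen} will be the main structural tool: it decomposes the bilinear condition $\vec w \in A_{\vec p, q}(\R^{2d})$ into the componentwise conditions $\nu_{\vec w}^{q} \in A_{2q}(\R^d)$ and $w_j^{-p'_j} \in A_{2p'_j}(\R^d)$, so that ultimately everything reduces to linear weight theory.

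For the bare operator $I_\beta$, my strategy would be to establish a pointwise sparse domination of the form
\begin{equation*}
  |I_\beta(f_1, f_2)(x)| \lesssim \sum_{Q \in \mathcal{S}} |Q|^{\beta/d} \ave{|f_1|}_Q \ave{|f_2|}_Q \chi_Q(x),
\end{equation*}
with $\mathcal{S}$ a sparse family of cubes; such estimates are by now standard for fractional and Calder\'on--Zygmund type kernels satisfying a size bound like that of $I_\beta$. The desired weighted bound for the bilinear sparse fractional form on the right-hand side follows, via testing on indicators and H\"older's inequality, directly from the two componentwise $A_p$ conditions delivered by Theorem \ref{thm:Moen}. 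This covers the full range $1/2 < p < d/\beta$ because the sparse bound is pointwise and hence insensitive to whether $p$ is above or below $1$.

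For the commutators $[I_\beta, \vec b]_\alpha$, I would use the bilinear Coifman--Rochberg--Weiss conjugation trick. Fix $\vec b = (b_1, b_2) \in \BMO(\R^d)^2$ and consider the analytic family
\begin{equation*}
  I_\beta^{z}(f_1, f_2)(x) = e^{z_1 b_1(x) + z_2 b_2(x)} I_\beta(e^{-z_1 b_1} f_1, e^{-z_2 b_2} f_2)(x), \qquad z = (z_1, z_2) \in \C^2.
\end{equation*}
Differentiating in the appropriate variables at $z = 0$ recovers each of $[I_\beta, \vec b]_{(1,0)}$, $[I_\beta, \vec b]_{(0,1)}$ and, with a double derivative, $[I_\beta, \vec b]_{(1,1)}$, so the Cauchy integral formula allows one to express each commutator as a contour integral of $I_\beta^{z}$ over small circles in $\C$. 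It therefore suffices to bound $I_\beta^{z}$ from $L^{p_1}(w_1^{p_1}) \times L^{p_2}(w_2^{p_2})$ into $L^q(\nu_{\vec w}^{q})$ uniformly for $|z_j| = \epsilon$ with $\epsilon$ small. Absorbing the exponential factors into the weights, this is equivalent to verifying that $(e^{\Re(z_1) b_1} w_1, e^{\Re(z_2) b_2} w_2) \in A_{\vec p, q}(\R^{2d})$, which, by Theorem \ref{thm:Moen}, reduces to the stability of the ordinary $A_p$ classes under perturbations $w \mapsto e^{r b} w$ for $b \in \BMO$ and small $r$.

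The main obstacle, and the step requiring the most care, is precisely this uniform bound on the contour: one needs a quantitative version of the John--Nirenberg inequality to guarantee that the $A_{2q}$ and $A_{2p'_j}$ constants of the perturbed weights remain under control when $|z|$ is comparable to $1/\Norm{\vec b}{\BMO}$. This is, however, entirely within the scope of well-established linear weight theory, and no genuinely new ingredient is needed beyond the sparse bound for $I_\beta$, Theorem \ref{thm:Moen}, and the John--Nirenberg inequality applied factor by factor.
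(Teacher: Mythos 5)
The paper itself does not prove Theorem \ref{thm:weigh. bdd. Ib}: it is quoted from Chen--Wu and Moen (the citation sits in the theorem header) and is then used as a black box in the proof of Theorem \ref{thm:Ib}. There is therefore no ``paper's own proof'' to compare against, so the fair question is whether your blind argument actually establishes the quoted result.

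Your plan is sound for the Banach range, but there is a genuine gap at the quasi-Banach end. For the bare operator the strategy is fine: passing from $\vec w\in A_{\vec p,q}(\R^{2d})$ to the componentwise conditions $w_j^{-p'_j}\in A_{2p'_j}(\R^d)$ and $\nu_{\vec w}^q\in A_{2q}(\R^d)$ via Theorem \ref{thm:Moen}, and combining this with a sparse domination of $I_\beta$, is a legitimate modern route, and because positive sparse operators are handled pointwise this part indeed survives into the range $p\in(1/2,1)$ and hence $q<1$. For the commutators, however, the Coifman--Rochberg--Weiss conjugation requires moving the $L^q(\nu_{\vec w}^q)$-norm inside a Cauchy contour integral, and this Minkowski-type step fails when $q<1$. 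The hypotheses impose only $1/2<p<d/\beta$ and $1/q=1/p-\beta/d$, so $q<1$ is in range: take $p$ near $1/2$ with $\beta/d$ small. The sources cited by the paper obtain the commutator bounds through sharp maximal function estimates rather than conjugation, and that is precisely the step that is insensitive to the target being quasi-Banach; this is the essential technical difference between your proposal and the literature. Since the paper itself only invokes Theorem \ref{thm:weigh. bdd. Ib} in a regime with $q>1$ (namely inside Theorem \ref{thm:Ib}), your argument would suffice for that application, but it does not cover the theorem in the full generality in which it is stated.
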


The compactness of the commutator $[I_{\beta},\vec{b}]_{\alpha}$ was considered by Chaffee--Torres \cite{CT} and Wang--Zhou--Teng \cite{WZT} in the unweighted case:

\begin{theorem}[\cite{CT}, Theorem 3.1 (i) and (iii) and \cite{WZT}, Theorem 1.2 (A1) and (A3)]\label{thm:unw. comp. Ib}
Let $1<p_1,p_2<\infty$, $\vec{p}=(p_1,p_2)$, $\frac{1}{p}=\frac{1}{p_1}+\frac{1}{p_2}$, $0<\beta<2d$, $\frac{\beta}{d}<\frac{1}{p_1}+\frac{1}{p_2}$, and $q$ such that $\frac{1}{q}=\frac{1}{p_1}+\frac{1}{p_2}-\frac{\beta}{d}$ and $1<p,q<\infty$. If $\vec{b}=(b,b)\in\CMO(\R^d)^2$, then $[I_{\beta},\vec{b}]_{\alpha}$ for each $\alpha\in\{(0,1),(1,0),(1,1)\}$ is compact from $L^{p_1}(\R^d) \times L^{p_2}(\R^d)$ to $L^q(\R^d)$.
\end{theorem}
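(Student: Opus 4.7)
My plan follows the classical two-step strategy for compactness of commutators: reduce to the smooth, compactly supported case via approximation in $\BMO$, and then verify a Fr\'echet--Kolmogorov type compactness criterion for operators with smooth symbols. Note that the main extrapolation Theorem \ref{thm:main} is of no direct help here, since it requires a preexisting compactness assumption on \emph{some} triple; the unweighted compactness must instead be established by hand (this is precisely what the cited works \cite{CT,WZT} do, and what the present paper uses as its seed compactness when extrapolating to the weighted setting).

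By the definition of $\CMO$, choose $b_n \in C_c^\infty(\R^d)$ with $\|b - b_n\|_{\BMO} \to 0$. The commutator $[I_\beta, \vec b]_\alpha$ is linear in each slot of $\vec b = (b,b)$ separately, and bilinear for $\alpha = (1,1)$, so $[I_\beta, \vec b]_\alpha - [I_\beta, \vec b_n]_\alpha$ can be written as a (telescoping) sum of commutators whose symbol in at least one slot is the difference $b - b_n$. Applying Theorem \ref{thm:weigh. bdd. Ib} with trivial weights $\vec w \equiv 1$ to each term yields
\begin{equation*}
  \bNorm{[I_\beta, \vec b]_\alpha - [I_\beta, \vec b_n]_\alpha}{L^{p_1}(\R^d) \times L^{p_2}(\R^d) \to L^q(\R^d)} \lesssim \|b - b_n\|_{\BMO} \to 0.
\end{equation*}
Since the space of compact bilinear operators is closed under the operator norm, it suffices to establish compactness of $[I_\beta, \vec b_n]_\alpha$ for each fixed $n$; that is, we may assume $b \in C_c^\infty(\R^d)$ with, say, $\supp b \subset B(0,R_0)$.

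For such a smooth symbol, I would apply a bilinear Fr\'echet--Kolmogorov criterion, which reduces precompactness in $L^q(\R^d)$ of the image of the unit ball to three conditions, uniform in $(f_1,f_2)$ with $\|f_i\|_{L^{p_i}} \le 1$: (i) uniform boundedness in $L^q(\R^d)$, supplied by Theorem \ref{thm:weigh. bdd. Ib}; (ii) uniform $L^q$ equicontinuity under small translations $\tau_h - \mathrm{id}$; and (iii) uniform tail decay at infinity. For (iii) one exploits that $b$ vanishes outside $B(0,R_0)$, so in each summand comprising the commutator either the $b$-factor or the $I_\beta$-integrand is supported in $B(0,R_0)$; combined with the decay of the fractional kernel $(|x-y_1|^2 + |x-y_2|^2)^{-(2d - \beta)/2}$ for $|x|$ large, this gives decay of the form $|x|^{-(2d-\beta)}$ times $\|f_1\|_{L^{p_1}}\|f_2\|_{L^{p_2}}$, which is $L^q$-summable at infinity. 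For (ii) one uses the Lipschitz bound $|b(x+h) - b(x)| \lesssim |h| \|\nabla b\|_{L^\infty}$ to control the $b$-difference contributions, and the standard H\"older estimate of the fractional kernel (difference in $x$) to control the $I_\beta$-difference contributions.

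The main obstacle will be the iterated commutator $\alpha = (1,1)$, whose expansion produces four terms mixing values $b(x)$, $b(y_1)$, $b(y_2)$, each of which needs its own kernel decomposition, and where the equicontinuity estimate in (ii) requires a careful splitting depending on the relative sizes of $|h|$ and $\min_j|x - y_j|$, in the spirit of the Calder\'on--Zygmund regularity bound \eqref{Kernel2}. Modulo this book-keeping, the argument is a direct adaptation of Uchiyama's scheme for linear $\CMO$-commutators of singular integrals to the bilinear fractional setting, exactly as carried out in \cite{CT,WZT}.
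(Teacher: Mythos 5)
The paper does not prove Theorem~\ref{thm:unw. comp. Ib} at all: it is quoted from Chaffee--Torres \cite{CT} (Theorem 3.1~(i),(iii)) and Wang--Zhou--Teng \cite{WZT} (Theorem 1.2~(A1),(A3)), and it plays the role of the \emph{seed} compactness that is then fed, together with the weighted boundedness of Theorem~\ref{thm:weigh. bdd. Ib}, into the extrapolation machinery (case~\ref{(2c)} of Theorem~\ref{thm:main}) to produce the weighted Theorem~\ref{thm:Ib}. You correctly recognize this logical structure — that Theorem~\ref{thm:main} gives no help for the unweighted case, which must be established by hand.

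Your sketch does reconstruct, in outline, the argument used in the cited works. Two comments. First, the $\BMO$-approximation step is sound: the telescoping of the iterated commutator is the right decomposition, and applying Theorem~\ref{thm:weigh. bdd. Ib} with trivial weights does give operator-norm continuity in $\vec b$, so it suffices to treat $b\in C_c^\infty(\R^d)$. Second, however, your tail estimate for the Fr\'echet--Kolmogorov criterion is too optimistic as stated. For, say, $[I_\beta,\vec b]_{(1,0)}(f_1,f_2)(x)=b(x)I_\beta(f_1,f_2)(x)-I_\beta(bf_1,f_2)(x)$ with $\supp b\subset B(0,R_0)$ and $|x|$ large, only the $y_1$-variable is confined to $B(0,R_0)$; the $y_2$-integration is over all of $\R^d$ against $f_2\in L^{p_2}$, so the raw pointwise bound $(|x-y_1|^2+|x-y_2|^2)^{-(2d-\beta)/2}\lesssim |x|^{-(2d-\beta)}$ leaves a divergent integral in $y_2$. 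One needs a H\"older split of the kernel exponent between $|x-y_1|$ and $|x-y_2|$, and in the original proofs the argument is organized around smooth truncations of the $I_\beta$-kernel (cf.\ \cite{BDMT} and the paper's own remark following Theorem~\ref{thm:Ib}), precisely to make both the equicontinuity and the tail estimates tractable uniformly in $(f_1,f_2)$. This is a real technical gap in the sketch rather than mere book-keeping, though it does not change the overall strategy, which is correct and does match \cite{CT,WZT}.
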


Thus, by combining the verification of the assumptions \ref{(2c)} of Theorem \ref{thm:main}, Theorems \ref{thm:weigh. bdd. Ib} and \ref{thm:unw. comp. Ib} we can now recover and improve the following results of Chaffee--Torres \cite[Theorem 3.1 (ii)]{CT} and Wang--Zhou--Teng \cite[Theorem 1.2 (A2)]{WZT}, lifting their additional assumption that $w_1^{\frac{p_1 q}{p}},w_2^{\frac{p_2 q}{p}}\in A_{p}(\R^d)$:

\begin{theorem}\label{thm:Ib}
Let $1<p_1,p_2<\infty$, $\vec{p}=(p_1,p_2)$, $\frac{1}{p}=\frac{1}{p_1}+\frac{1}{p_2}$, $0<\beta<2d$, $\frac{\beta}{d}<\frac{1}{p_1}+\frac{1}{p_2}$, and $q$ such that $\frac{1}{q}=\frac{1}{p_1}+\frac{1}{p_2}-\frac{\beta}{d}$ and $1<p,q<\infty$. If $\vec{b}=(b,b)\in\CMO(\R^d)^2$, then $[I_{\beta},\vec{b}]_{\alpha}$ for each $\alpha\in\{(0,1),(1,0),(1,1)\}$ is compact from $L^{p_1}(w_1^{p_1})\times L^{p_2}(w_2^{p_2})$ to $L^q(\nu_{\vec{w}}^q)$ for all $\vec{w}=(w_1, w_2) \in A_{\vec{p}, q}(\R^{2d})$.  
\end{theorem}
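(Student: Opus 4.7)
The plan is to apply Theorem \ref{thm:main} in its case \ref{(2c)} with the choice $\alpha=\beta/d$, following the same template as the proof of Theorem \ref{thm:weighted comp. of CZO}. Under the dictionary $(\vec q,\,q^*,\,\vec v)\leftrightarrow(\vec p,\,q,\,\vec w)$, the abstract collection $\Theta$ of case \ref{(2c)} becomes the family of triples $\big(L^{p_1}(w_1^{p_1}),L^{p_2}(w_2^{p_2}),L^q(\nu_{\vec w}^q)\big)$ indexed by $\vec p=(p_1,p_2)\in(1,\infty)^2$ with $1/p=1/p_1+1/p_2\in(\beta/d,\,\beta/d+1)$, the companion exponent $q$ defined by $1/q=1/p-\beta/d$, and $\vec w=(w_1,w_2)\in A_{\vec p,q}(\R^{2d})$. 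The parameter ranges required by case \ref{(2c)} are consistent with the hypotheses of Theorem \ref{thm:Ib}: the assumption $\beta/d<1/p_1+1/p_2$ yields $1/p>\alpha$, while $p>1$ gives $1/p<1<\alpha+1$, and consequently the resulting $q^*=q$ lies in $(1,\infty)$, as required.

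It then remains to verify the two hypotheses of Theorem \ref{thm:main}. Global boundedness \eqref{condit. 1 of bdd.} on every triple of $\Theta$ is precisely the content of Theorem \ref{thm:weigh. bdd. Ib} applied to $[I_\beta,\vec b]_\alpha$; this is legitimate because $\vec b=(b,b)\in\CMO(\R^d)^2\subset\BMO(\R^d)^2$. For the compactness seed \eqref{condit. 1 of cmp.}, I would take the trivial weight $\vec w=(1,1)$, which manifestly lies in $A_{\vec p,q}(\R^{2d})$ (all averages defining $[\vec w]_{A_{\vec p,q}}$ equal $1$) and realises the unweighted triple $(L^{p_1}(\R^d),L^{p_2}(\R^d),L^q(\R^d))$. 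On this triple the compactness of $[I_\beta,\vec b]_\alpha$ is furnished by Theorem \ref{thm:unw. comp. Ib}, whose parameter assumptions coincide with those of Theorem \ref{thm:Ib}.

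Combining these two inputs, Theorem \ref{thm:main} case \ref{(2c)} propagates compactness to every triple in $\Theta$, which is exactly the statement of Theorem \ref{thm:Ib}. I do not anticipate a serious obstacle here: the proof is essentially a structured citation of Theorems \ref{thm:weigh. bdd. Ib}, \ref{thm:unw. comp. Ib} and \ref{thm:main}, and the only genuine work is the bookkeeping verification of parameter-range compatibility sketched above. What makes the statement non-trivial, however — and what the abstract extrapolation machinery delivers \emph{for free} — is the removal of the auxiliary hypothesis $w_j^{p_j q/p}\in A_p(\R^d)$ that was imposed in \cite{CT,WZT}; the present approach sidesteps this entirely because Theorem \ref{thm:main} requires compactness at only a \emph{single} triple, and we supply the unweighted one.
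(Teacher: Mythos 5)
Your proof is correct and follows essentially the same route as the paper's: invoke case \ref{(2c)} of Theorem \ref{thm:main} with the abstract parameter set to $\beta/d$, obtain uniform boundedness from Theorem \ref{thm:weigh. bdd. Ib}, and supply the compactness seed at the trivial weight $\vec w\equiv(1,1)\in A_{\vec p,q}(\R^{2d})$ via Theorem \ref{thm:unw. comp. Ib}. Your explicit parameter-range verification ($\beta/d<1/p$ from the hypothesis, $1/p<1\leq\alpha+1$ from $p>1$) matches what the paper does implicitly, so there is nothing missing.
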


\begin{proof}
We verify the assumptions \ref{(2c)} of Theorem \ref{thm:main} for $[I_{\beta},\vec{b}]_{\alpha}$ for each $\alpha\in\{(0,1),(1,0),(1,1)\}$ in place of $T$: By Theorem \ref{thm:weigh. bdd. Ib}, $[I_{\beta},\vec{b}]_{\alpha}$ is a bounded operator from $L^{s_1}(u_1)\times L^{s_2}(u_2)$ to $L^{s^*}(\nu_{\vec u}^{s^*})$ for all $\vec{s}=(s_1,s_2)\in(1,\infty)^2$, $\frac{1}{s}=\frac{1}{s_1}+\frac{1}{s_2}<1$, $s^*>1$ such that $\frac{1}{s^*}=\frac{1}{s_1}+\frac{1}{s_2}-\frac{\beta}{d}$ and all $\vec{u}\in A_{\vec{s}, s^*}(\R^{2d})$. By Theorem \ref{thm:unw. comp. Ib}, $[I_{\beta},\vec{b}]_{\alpha}$ is compact from $L^{r_1}(\R^d)=L^{r_1}(v_1^{r_1})\times L^{r_2}(\R^d)=L^{r_2}(v_2^{r_2})$ to $L^{r^*}(\R^d)=L^{r^*}(\nu_{\vec{r}}^{r^*})$ with $\vec{v}=(v_1,v_2)\equiv(1,1)\in  A_{\vec{r}, r^*}(\R^{2d})$ and $\nu_{\vec{r}}\equiv 1$. Thus Theorem \ref{thm:main} applies to give the compactness of $[I_{\beta},\vec{b}]_{\alpha}$ from $L^{p_1}(w_1^{p_1})\times L^{p_2}(w_2^{p_2})$ to $L^q(\nu_{\vec{w}}^q)$ for all $\vec{p}=(p_1,p_2)\in (1,\infty)^2$, $\frac{1}{p}=\frac{1}{p_1}+\frac{1}{p_2}<1$, $q>1$ such that $\frac{1}{p}-\frac{1}{q}=\frac{1}{s}-\frac{1}{s^*}=\frac{1}{r}-\frac{1}{r^*}=\frac{\beta}{d}$ and all $\vec{w}=(w_1, w_2)\in A_{\vec{p}, q}(\R^{2d})$. 
\end{proof}

The proofs in \cite{CT,WZT} were based on considering smooth truncations of $I_\beta$ (see \cite{BDMT} and the references therein) and verifying the weighted Fr\'echet--Kolmogorov criterion \cite{ClopCruz}. We obtain a more general result by avoiding this criterion, but Cao--Olivo--Yabuta \cite{COY} achieve a further generalization by an approach again based on the weighted  Fr\'echet--Kolmogorov criterion.

\section{Commutators of bilinear Fourier multipliers}\label{section 8}

In this section, we apply Theorem \ref{thm:main} to the commutators of bilinear Fourier multipliers (first studied by Coifman and Meyer \cite{CM:audela}) which satisfy certain Sobolev regularity conditions. 

Given $s \in \R$ and $\vec s=(s_1,s_2)\in\R^2$, the Sobolev spaces $H^s(\R^{2d})$ and $H^{\vec s}(\R^{2d})$ are defined by the norms
\begin{equation*} 
\begin{split}
  \|f\|_{H^{s}(\R^{2d})} &=\bigg(\int_{\R^{2d}} (1+|\xi_1|^2+|\xi_2|^2)^{s}|\widehat{f}(\xi_1,\xi_2)|^2 d\xi_1 d\xi_2\bigg)^{\frac{1}{2}},  \\
  \|f\|_{H^{\vec s}(\R^{2d})} &=\bigg(\int_{\R^{2d}} (1+|\xi_1|^2)^{s_1}
  (1+|\xi_2|^2)^{s_2} |\widehat{f}(\xi_1,\xi_2)|^2 d\xi_1 d\xi_2\bigg)^{\frac{1}{2}},
\end{split}
\end{equation*}
where $\widehat{f}$ denotes the Fourier transform of $f$. 
Let $\Phi \in \testi(\R^{2d})$ satisfy 
\begin{equation*}
\begin{cases}
  \supp(\Phi) \subset \big\{(\xi_1,\xi_2):\frac{1}{2} \leq |\xi_1|+|\xi_2|\leq 2\big\};\\
  \sum_{j \in \Z} \Phi(2^{-j}\xi_1,2^{-j}\xi_2)=1 \quad \text{for all}\quad (\xi_1,\xi_2) \in \R^{2d} \setminus\{0\}.
\end{cases}
\end{equation*}
For $\sigma\in L^{\infty}(\R^{2d})$, we denote $\sigma_j(\xi_1,\xi_2)=\Phi(\xi_1,\xi_2)\sigma(2^j\xi_1,2^j\xi_2)$ for $j\in\Z$.
The bilinear Fourier multiplier $T_{\sigma}$ with symbol $\sigma$ is defined by
\begin{equation*}
  T_{\sigma}(f_1,f_2)(x)=\int_{\R^{2d}}\sigma(\xi_1,\xi_2)
  \widehat{f}_1(\xi_1)\widehat{f}_2(\xi_2)e^{2\pi i x (\xi_1+\xi_2)} d\xi_1 d\xi_2,
\end{equation*}
for $f_1,f_2\in\testi(\R^d)$.

Fujita--Tomita \cite{FT12}, Jiao \cite{Jiao} and Zhou--Li \cite{ZL} obtained the following weighted boundedness results for $T_{\sigma}$ and its commutators:

\begin{theorem}[\cite{FT12}, Theorem 6.2, \cite{Jiao} and \cite{ZL}, Theorem 1]\label{thm:Fourier multi. bdd.}
Let $\vec{b}\in\BMO(\R^d)^2$. The operators $T_{\sigma}$ and $[T_{\sigma},\vec{b}]_{\alpha}$ for each $\alpha\in\{(0,1),(1,0)\}$ are bounded bilinear operators from $L^{p_1}(w_1) \times L^{p_2}(w_2)$ to $L^p(\nu_{\vec{w},\vec{p}})$, where  $\frac{1}{p}=\frac{1}{p_1}+\frac{1}{p_2}<1$, under either of the following cases:
\begin{enumerate}[(1)]
  \item\namedlabel{thm:Fourier multi. bdd. 1}{(1)} 
  \begin{enumerate}
  \item $\sigma$ satisfies $\sup_{j \in \Z} \|\sigma_j\|_{H^{s}(\R^{2d})}<\infty$ with $s\in (d,2d]$,
  \item $p_j\in(t_j,\infty)$ for some $t_j\in[1,2)$ such that $\frac{1}{t_1}+\frac{1}{t_2}=\frac{s}{d}$,  and
  \item $\vec{w}=(w_1,w_2)\in A_{\vec{p}/ \vec{t}}(\R^{2d})$.
\end{enumerate}
  \item\namedlabel{thm:Fourier multi. bdd. 2}{(2)} 
\begin{enumerate}
  \item $\sigma$ satisfies $\sup_{j \in \Z}  \|\sigma_j\|_{H^{\vec s}(\R^{2d})}<\infty$ with $\vec s=(s_1,s_2) \in (d/2,d]^2$,
  \item $p_j>d/s_j$ and 
  \item $\vec{w}=(w_1,w_2) \in A_{p_1 s_1/d}(\R^d) \times A_{p_2 s_2/d}(\R^d)$.
\end{enumerate}
\end{enumerate}
\end{theorem}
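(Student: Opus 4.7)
The strategy is to split the statement into two parts: first establish the weighted boundedness of $T_\sigma$ itself, and then deduce the commutator bounds from it. For the $T_\sigma$ part, the plan is to obtain a \emph{pointwise} control of $T_\sigma(f_1,f_2)$ by a product of fractional maximal functions of $f_1$ and $f_2$, and then invoke weighted boundedness of the Hardy--Littlewood maximal function.

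For case~\ref{thm:Fourier multi. bdd. 1}, I would decompose $T_\sigma = \sum_{j \in \Z} T^{(j)}$ along the Littlewood--Paley partition $\sum_j \Phi(2^{-j}\,\cdot\,) \equiv 1$. Each block $T^{(j)}$ has symbol $\sigma(\xi)\Phi(2^{-j}\xi)$, whose kernel is controlled via $\sigma_j$. Using Plancherel and Cauchy--Schwarz one expects a pointwise estimate of the shape
\[
|T^{(j)}(f_1,f_2)(x)| \lesssim \|\sigma_j\|_{H^s(\R^{2d})}\, M_{t_1} f_1(x)\, M_{t_2} f_2(x),
\]
where $M_t f := (M|f|^t)^{1/t}$, and the relation $\tfrac{1}{t_1}+\tfrac{1}{t_2}=\tfrac{s}{d}$ arises as the Sobolev trade-off that converts the $L^2$-regularity of order $s$ on $\sigma_j$ into two weighted $L^2$-decays of orders $d/t_j$ on the kernel side. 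The frequency-support disjointness of the pieces then allows one to sum in $j$ by almost-orthogonality, arriving at the pointwise bound $|T_\sigma(f_1,f_2)(x)| \lesssim M_{t_1}f_1(x)\,M_{t_2}f_2(x)$. Combined with H\"older's inequality and the Jiao characterisation (Theorem~\ref{thm:Jiao}) of $A_{\vec p/\vec t}$, which reduces the weighted $L^{p_j}(w_j)$-boundedness of $M_{t_j}$ to a classical $A_r$-bound on $M$, this gives the claimed estimate on $T_\sigma$. Case~\ref{thm:Fourier multi. bdd. 2} proceeds in parallel but runs the Littlewood--Paley decomposition in each variable separately and exploits the product Sobolev regularity $H^{\vec s}$, yielding the pointwise bound $|T_\sigma(f_1,f_2)(x)| \lesssim (M|f_1|^{d/s_1}(x))^{s_1/d}(M|f_2|^{d/s_2}(x))^{s_2/d}$ that matches the product weight condition $A_{p_1 s_1/d}\times A_{p_2 s_2/d}$.

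Once the boundedness of $T_\sigma$ is available for all $\vec w\in A_{\vec p/\vec t}(\R^{2d})$ in case~\ref{thm:Fourier multi. bdd. 1}, Theorem~\ref{thm: Ap/s bdd.} applies directly to promote it to a bound for the commutators $[T_\sigma,\vec b]_\alpha$ (including $\alpha\in\{(0,1),(1,0)\}$) with the correct $\BMO$-factors. In case~\ref{thm:Fourier multi. bdd. 2}, the corresponding commutator bound is extracted by applying the classical scalar $\BMO$-commutator theorem (of Coifman--Rochberg--Weiss type) in each variable separately, which is compatible with the product structure of the weight class.

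The main obstacle is the pointwise step: the Sobolev hypotheses on $\sigma_j$ are just sharp enough to produce a bilinear maximal bound with exactly the prescribed exponents $t_1,t_2$ (respectively $d/s_1,\,d/s_2$), and one has to balance carefully the Sobolev trade-off against the loss incurred when summing the frequency blocks, which is particularly delicate in case~\ref{thm:Fourier multi. bdd. 1} where the regularity is \emph{joint} rather than product. Once this pointwise estimate is in hand, the rest reduces to routine weighted maximal function theory combined with the commutator extrapolation of Theorem~\ref{thm: Ap/s bdd.}.
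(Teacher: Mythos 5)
The paper's own proof of this theorem is a pure citation: the boundedness of $T_\sigma$ is taken verbatim from \cite{Jiao} (case \ref{thm:Fourier multi. bdd. 1}) and \cite[Theorem 6.2]{FT12} (case \ref{thm:Fourier multi. bdd. 2}), the commutator bound in case \ref{thm:Fourier multi. bdd. 1} is obtained by combining \cite{Jiao} with Theorem \ref{thm: Ap/s bdd.}, and the commutator bound in case \ref{thm:Fourier multi. bdd. 2} is taken from \cite[Theorem 1]{ZL}. Your proposal attempts a self-contained argument, which is a genuinely different route; unfortunately it contains two concrete gaps.

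First, the summation in $j$. A per-block estimate of the shape $|T^{(j)}(f_1,f_2)(x)| \lesssim \|\sigma_j\|_{H^s}\, M_{t_1}f_1(x)\, M_{t_2}f_2(x)$ is available only after frequency-localising the \emph{inputs} via Bernstein's inequality (so that $L^2$-type averages coming from Cauchy--Schwarz against the $H^s$ weight can be downgraded to $L^{t_j}$-averages), and the summation over $j$ is then carried out through a square-function or paraproduct argument, not pointwise. ``Almost-orthogonality'' is an $L^2$ device and does not produce a pointwise bound $|T_\sigma(f_1,f_2)(x)|\lesssim M_{t_1}f_1(x)M_{t_2}f_2(x)$ on the full multiplier; such a pointwise domination is not available for symbols of H\"ormander type in the critical range $s\in(d,2d]$. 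What \cite{Jiao} actually proves is a Fefferman--Stein sharp-maximal estimate for $T_\sigma$ against the genuinely bilinear maximal function $\mathcal{M}_{\vec t}$, which is a much weaker (and correct) statement.

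Second, and independently of the first point, your argument would not yield case \ref{thm:Fourier multi. bdd. 1} even if the pointwise bound held. The product $\prod_j M_{t_j}f_j$ is bounded on $L^{p_1}(w_1)\times L^{p_2}(w_2)\to L^p(\nu_{\vec w,\vec p})$ precisely when each $w_j\in A_{p_j/t_j}(\R^d)$, i.e. for the product class $\prod_j A_{p_j/t_j}(\R^d)$, which is a \emph{proper subset} of $A_{\vec p/\vec t}(\R^{2d})$. The class $A_{\vec p/\vec t}(\R^{2d})$ characterises the boundedness of the multilinear maximal function $\mathcal{M}_{\vec t}$, which takes a joint supremum over cubes and is pointwise \emph{smaller} than $\prod_j M_{t_j}f_j$. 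Your appeal to Theorem \ref{thm:Jiao} does not repair this: that characterisation says $w_j^{1-(p_j/t_j)'}\in A_{\cdots}(\R^d)$ and $\nu_{\vec w,\vec p}\in A_{p/t}(\R^d)$, and in particular does \emph{not} imply $w_j\in A_{p_j/t_j}(\R^d)$, so it cannot be used to reduce the weighted boundedness of $M_{t_j}$ on $L^{p_j}(w_j)$ to a classical $A_r$-bound. Consequently the proposed chain of reductions only proves the product-weight version of the statement. (For the commutator in case \ref{thm:Fourier multi. bdd. 1} you correctly invoke Theorem \ref{thm: Ap/s bdd.}, matching the paper; for case \ref{thm:Fourier multi. bdd. 2} your suggestion of a Coifman--Rochberg--Weiss / Cauchy-integral argument in each variable separately is plausible in spirit but is only sketched, whereas the paper simply cites \cite{ZL}.)
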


\begin{proof}
In the cases \ref{thm:Fourier multi. bdd. 1} and \ref{thm:Fourier multi. bdd. 2} the boundedness of the operator $T_{\sigma}$ is contained in \cite{Jiao} and \cite[Theorem 6.2]{FT12} respectively. The boundedness of the commutators $[T_{\sigma},\vec{b}]_{\alpha}$ in the case \ref{thm:Fourier multi. bdd. 1} follows by combining the boundedness of the operator $T_{\sigma}$ in \cite{Jiao} with Theorem \ref{thm: Ap/s bdd.}. In the case \ref{thm:Fourier multi. bdd. 2} the boundedness of the commutators $[T_{\sigma},\vec{b}]_{\alpha}$ is contained in \cite[Theorem 1]{ZL}.
\end{proof}

\begin{remark}
As mentioned in \cite[Theorem 6.2]{FT12} and \cite{Jiao}, in both of the cases \ref{thm:Fourier multi. bdd. 1} and \ref{thm:Fourier multi. bdd. 2} of Theorem \ref{thm:Fourier multi. bdd.} the boundedness of the bilinear operator $T_{\sigma}$ holds for $0<p<\infty$.  
\end{remark}

Compactness of the commutator $[T_{\sigma},\vec{b}]_{\alpha}$ in the unweighted case was considered by Hu \cite{Hu14, Hu17}:

\begin{theorem}[\cite{Hu14}, Theorem 1.1 and \cite{Hu17}, Theorem 1.1]\label{thm:Fourier multi. comp.}
Suppose that $\vec{b}\in\CMO(\R^d)^2$. Then $[T_{\sigma},\vec{b}]_{\alpha}$ for each $\alpha\in\{(0,1),(1,0)\}$ is compact from $L^{p_1}(\R^d) \times L^{p_2}(\R^d)$ to $L^p(\R^d)$ in each of the cases \ref{thm:Fourier multi. bdd. 1} and \ref{thm:Fourier multi. bdd. 2} of Theorem \ref{thm:Fourier multi. bdd.}.
\end{theorem}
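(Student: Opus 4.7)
The plan is to adapt the two-step strategy that is standard for proving compactness of $\CMO$-commutators. First, I would exploit the boundedness estimate of Theorem~\ref{thm:Fourier multi. bdd.}: for $\alpha=e_j\in\{(0,1),(1,0)\}$, the commutator $[T_\sigma,\vec b]_\alpha$ depends linearly on the single $\BMO$-entry $b_j$, with
\[
  \|[T_\sigma,\vec b]_\alpha(f_1,f_2)\|_{L^p(\R^d)}\lesssim \|b_j\|_{\BMO}\prod_{i=1}^2\|f_i\|_{L^{p_i}(\R^d)}.
\]
Thus $b_j\mapsto [T_\sigma,\vec b]_\alpha$ is a bounded linear map from $\BMO(\R^d)$ into the Banach space of bounded bilinear maps $L^{p_1}\times L^{p_2}\to L^p$. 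Since the compact bilinear operators form a norm-closed subspace, and $\CMO(\R^d)=\overline{C_c^\infty(\R^d)}^{\BMO}$ by definition, a standard approximation argument reduces the problem to proving compactness under the additional assumption $b_j\in C_c^\infty(\R^d)$.

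Second, for $b_j\in C_c^\infty(\R^d)$ fixed, I would apply the classical Fr\'echet--Kolmogorov criterion in $L^p(\R^d)$ to the image family
\[
  F=\{[T_\sigma,\vec b]_\alpha(f_1,f_2) : \|f_i\|_{L^{p_i}(\R^d)}\le 1,\ i=1,2\}.
\]
Three things must be verified: boundedness of $F$ in $L^p$ (immediate from Theorem~\ref{thm:Fourier multi. bdd.}); tightness, i.e.\ $\sup_{g\in F}\|g\chi_{\{|x|>R\}}\|_{L^p}\to 0$ as $R\to\infty$ (obtained by using $\supp b_j\subset B(0,R_0)$ together with the decay of the kernel of $T_\sigma$ to localize the essential support of the output); and equicontinuity under translations, $\sup_{g\in F}\|g(\cdot+h)-g\|_{L^p}\to 0$ as $h\to 0$. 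The last of these is the crux: it follows by combining the pointwise smoothness of $b_j$ (which controls the factor $b_j(x)-b_j(y)$ appearing in the integral representation of the commutator) with a quantitative H\"older-type estimate on the kernel of $T_\sigma$, derived from a Littlewood--Paley decomposition $\sigma=\sum_k\sigma_k$ and the Sobolev regularity hypothesis on each piece.

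The main obstacle lies in the quantitative kernel regularity in case \ref{thm:Fourier multi. bdd. 2}, where $\sigma$ satisfies only the product-type Sobolev condition $\sup_k\|\sigma_k\|_{H^{\vec s}(\R^{2d})}<\infty$ with $\vec s=(s_1,s_2)\in(d/2,d]^2$. The isotropic condition of case \ref{thm:Fourier multi. bdd. 1} permits a direct Calder\'on--Zygmund-style kernel regularity argument, but in the product case one must control smoothness in each frequency variable independently, with exponents $s_j$ that may be less than $d$. The natural remedy, carried out in detail in \cite{Hu17}, is to split the Littlewood--Paley index into small and large scales and, in the large-scale regime, exploit the product regularity via Plancherel and Cauchy--Schwarz applied in each variable separately, summing the resulting series using the assumption $s_j>d/2$. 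This delicate estimation is the technical heart of the proof and is where the bulk of the work resides.
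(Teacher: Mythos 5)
This theorem is quoted from \cite{Hu14} and \cite{Hu17} (as the bracketed attribution in its heading indicates); the paper offers no proof of its own, treating it as a known unweighted compactness input to which Theorem~\ref{thm:main} is then applied, so there is no internal proof to compare against. Your sketch is a faithful reconstruction of the strategy of those references: reduce to $b_j\in C_c^\infty(\R^d)$ via the $\BMO$-linear operator-norm bound and the norm-closedness of compact bilinear operators, then verify the classical (unweighted) Fr\'echet--Kolmogorov criterion, with tightness from the compact support of $b_j$ and kernel decay, translation equicontinuity from a Littlewood--Paley decomposition and the Sobolev hypotheses on $\sigma$, and the product-type regularity of case \ref{thm:Fourier multi. bdd. 2} being, as you say, the technical bottleneck. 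Be aware, however, that this is precisely the methodology the present paper (and \cite{HL}) set out to \emph{avoid} in the weighted setting: the paper's own contribution is a ``soft'' interpolation-based extrapolation of compactness that sidesteps Fr\'echet--Kolmogorov-type criteria, and your use of that criterion is appropriate here only because you are reproving a cited black-box input rather than a result the paper itself establishes.
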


By combining the above Theorems \ref{thm:Fourier multi. bdd.} and \ref{thm:Fourier multi. comp.} with our main Theorem \ref{thm:main}, we can now recover and improve the result of Hu \cite[Theorem 1.1]{Hu17}, lifting their assumption that $\nu_{\vec w,\vec{p}}\in A_{p}(\R^d)$, and the result of Zhou--Li \cite[Theorem 2]{ZL}:

\begin{theorem}\label{thm:Hu,ZL improved comp. result}
Suppose that $\vec{b}\in\CMO(\R^d)^2$. Then $[T_{\sigma},\vec{b}]_{\alpha}$ for each $\alpha\in\{(0,1),(1,0)\}$ is compact from $L^{p_1}(w_1) \times L^{p_2}(w_2)$ to $L^p(\nu_{\vec{w},\vec{p}})$ in each of the cases \ref{thm:Fourier multi. bdd. 1} and \ref{thm:Fourier multi. bdd. 2} of Theorem \ref{thm:Fourier multi. bdd.}.
\end{theorem}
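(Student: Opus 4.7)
The plan is to apply Theorem \ref{thm:main} to $[T_\sigma,\vec b]_\alpha$ in place of $T$, following exactly the template used in the proofs of Theorem \ref{thm:weighted comp. of CZO} and Theorem \ref{thm:Ib}. In both of the cases \ref{thm:Fourier multi. bdd. 1} and \ref{thm:Fourier multi. bdd. 2} of Theorem \ref{thm:Fourier multi. bdd.}, we have two ingredients already at our disposal: the full weighted boundedness of the commutator on the entire relevant scale (Theorem \ref{thm:Fourier multi. bdd.}) and the unweighted compactness at some single choice of exponents (Theorem \ref{thm:Fourier multi. comp.}). The only task is then to identify which case of Theorem \ref{thm:main} matches each situation and to read off the vector $\vec s$.

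For case \ref{thm:Fourier multi. bdd. 1}, one invokes case \ref{(1a)} of Theorem \ref{thm:main} with $\vec s=\vec t$, noting that the hypothesis $\frac{1}{t_1}+\frac{1}{t_2}=\frac{s}{d}\in(1,2]$ guarantees $\vec t\in[1,\infty)^2$. Theorem \ref{thm:Fourier multi. bdd.}\ref{thm:Fourier multi. bdd. 1} gives the required boundedness of $[T_\sigma,\vec b]_\alpha:L^{q_1}(u_1)\times L^{q_2}(u_2)\to L^q(\nu_{\vec u,\vec q})$ for all admissible $\vec q$ and all $\vec u\in A_{\vec q/\vec t}(\R^{2d})$, while Theorem \ref{thm:Fourier multi. comp.} provides the compactness at some single choice of unweighted exponents $\vec r$ with the trivial weight $\vec v\equiv(1,1)$, which of course lies in $A_{\vec r/\vec t}(\R^{2d})$. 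All hypotheses of case \ref{(1a)} of Theorem \ref{thm:main} are then met, and the conclusion gives the asserted compactness for every $\vec p\in(t_1,\infty)\times(t_2,\infty)$ with $\frac1p<1$ and every $\vec w\in A_{\vec p/\vec t}(\R^{2d})$.

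For case \ref{thm:Fourier multi. bdd. 2}, one appeals instead to case \ref{(1b)} of Theorem \ref{thm:main}, with the choice $\tilde s_j:=d/s_j$, which indeed belongs to $[1,\infty)$ since $s_j\in(d/2,d]$; moreover $p_j>d/s_j=\tilde s_j$, so the range of exponents matches. The weight condition $w_j\in A_{p_j s_j/d}(\R^d)=A_{p_j/\tilde s_j}(\R^d)$ then coincides exactly with the componentwise class of case \ref{(1b)}. Boundedness on the full scale is again supplied by Theorem \ref{thm:Fourier multi. bdd.}\ref{thm:Fourier multi. bdd. 2}, and unweighted compactness at one point by Theorem \ref{thm:Fourier multi. comp.}, taking $(v_1,v_2)\equiv(1,1)\in A_{r_1/\tilde s_1}(\R^d)\times A_{r_2/\tilde s_2}(\R^d)$. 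Theorem \ref{thm:main} then produces the desired weighted compactness.

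There is no real obstacle: the only point that needs a brief sanity check is the translation between the weight conditions stated in Theorem \ref{thm:Fourier multi. bdd.} (phrased in terms of $\vec t$ or the exponents $p_j s_j/d$) and the $\vec s$-indexed classes $A_{\vec p/\vec s}$ and $A_{p_j/s_j}$ appearing in cases \ref{(1a)} and \ref{(1b)} of Theorem \ref{thm:main}. Once this dictionary is set, the proof is a direct application of Theorem \ref{thm:main}, exactly as in Sections \ref{section 6} and \ref{section 7}.
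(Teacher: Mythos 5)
Your proof is correct and follows exactly the same route as the paper's own proof: apply case \ref{(1a)} of Theorem \ref{thm:main} with $\vec s=\vec t$ under assumptions \ref{thm:Fourier multi. bdd. 1}, and case \ref{(1b)} with $\vec s=(d/s_1,d/s_2)$ under assumptions \ref{thm:Fourier multi. bdd. 2}, using Theorem \ref{thm:Fourier multi. bdd.} for the full-scale weighted boundedness and Theorem \ref{thm:Fourier multi. comp.} (with the trivial weight) for compactness at one point. Your explicit dictionary between the weight classes of Theorem \ref{thm:Fourier multi. bdd.}\ref{thm:Fourier multi. bdd. 2} and the $A_{p_j/\tilde s_j}$ class of case \ref{(1b)} is slightly more detailed than the paper's one-line remark, but the substance is identical.
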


\begin{proof}
We prove the theorem  in the case that the assumptions \ref{thm:Fourier multi. bdd. 1} of Theorem \ref{thm:Fourier multi. bdd.} are in force. The other case is proved in a similar way.
We verify the assumptions \ref{(1a)} of Theorem \ref{thm:main} for $[T_{\sigma},\vec{b}]_{\alpha}$ for each $\alpha\in\{(0,1),(1,0)\}$ in place of $T$: By Theorem \ref{thm:Fourier multi. bdd.},
$[T_{\sigma},\vec{b}]_{\alpha}$ is a bounded operator from $L^{q_1}(u_1)\times L^{q_2}(u_2)$ to $L^q(\nu_{\vec u,\vec{q}})$ for all $\vec{q}=(q_1,q_2)\in(t_j,\infty)^2$, $q>1$ and all $\vec{u}\in A_{\vec{q}/\vec{t}}(\R^{2d})$. By Theorem \ref{thm:Fourier multi. comp.}, $[T_{\sigma},\vec{b}]_{\alpha}$ is compact from $L^{r_1}(\R^d)=L^{r_1}(v_1)\times L^{r_2}(\R^d)=L^{r_2}(v_2)$ to $L^{r}(\R^d)=L^r(\nu_{\vec v,\vec{r}})$ with $\vec{v}=(v_1,v_2)\equiv(1,1)\in A_{\vec{r}}(\R^{2d})$ and $\nu_{\vec v,\vec{r}}\equiv1$. Thus Theorem \ref{thm:main} applies to give the compactness of $[T_{\sigma},\vec{b}]_{\alpha}$ from $L^{p_1}(w_1)\times L^{p_2}(w_2)$ to $L^p(\nu_{\vec w,\vec{p}})$ for all $\vec{p}=(p_1,p_2)\in(t_j,\infty)^2$, $p>1$ and all $\vec{w}\in A_{\vec{p}/\vec{t}}(\R^{2d})$. If we work under the assumptions \ref{thm:Fourier multi. bdd. 2} of Theorem \ref{thm:Fourier multi. bdd.} then we verify the assumptions \ref{(1b)} of Theorem \ref{thm:main}.
\end{proof}

The proof in \cite{Hu17} was based on the idea of introducing a new subtle bi(sub)linear maximal operator to control the commutators $[T_{\sigma},\vec{b}]_{\alpha}$. As in the cases of the commutators of bilinear Calder\'on--Zygmund and fractional integral operators both of the original proofs in \cite{Hu17,ZL} relied on verifying the weighted Fr\'echet--Kolmogorov criterion \cite{ClopCruz}, which is avoided by the argument above. Again, Cao--Olivo--Yabuta \cite{COY} obtain a further generalisation by developing the approach based on the weighted Fr\'echet--Kolmogorov criterion.


\end{document}